\newcommand{\D}{\displaystyle}
\newtheorem{lemma}{Lemma}[section]
\newtheorem{theorem}[lemma]{Theorem}
\newtheorem{prop}[lemma]{Proposition}
\begin{document}
\title[]{The initial value problem for the Binormal Flow with rough data}
\author[V. Banica]{Valeria Banica}
\address[V. Banica]{Laboratoire Analyse et probabilit\'es (EA 2172)\\D\'eptartement de Math\'ematiques\\ Universit\'e d'Evry, 23 Bd. de France, 91037 Evry\\ France, Valeria.Banica@univ-evry.fr} 

\author[L. Vega]{Luis Vega}
\address[L. Vega]{Departamento de Matem\'aticas, Universidad del Pais Vasco, Aptdo. 644, 48080 Bilbao, Spain, luis.vega@ehu.es
\hfill\break\indent \and
BCAM Alameda Mazarredo 14, 48009 Bilbao, Spain, lvega@bcamath.org} 
\begin{abstract}In this article we consider the initial value problem  of the binormal flow with initial data given by curves that are regular except at one point where they have a corner. We prove that under suitable conditions on the initial data a unique regular solution exists  for strictly positive and strictly negative times. Moreover, this solution satisfies a weak version of the equation for all times and can be seen as a perturbation of a suitably chosen self-similar solution. Conversely, we also prove that if  at time $t=1$ a small regular perturbation of a self-similar solution is taken as initial condition then there exists a unique solution that at time  $t=0$ is regular except at a  point where it has  a corner  with the same angle as the one of the self-similar solution. This solution can be extended  for negative times. The proof uses the full strength of the previous papers \cite{GRV}, \cite{BV1}, \cite{BV2} and \cite{BV3} on the study of small perturbations of self-similar solutions. A compactness argument is used to avoid the weighted conditions we needed in \cite{BV3}, as well as a more refined analysis of the asymptotic in time and in space of the tangent and normal vectors. \\

\begin{center}
{\normalsize \'Evolution par le flot binormal  de courbes \`a un coin}
\end{center}

\noindent
{\tiny R\'ESUM\'E}. 
Dans cet article on consid\`ere le flot binormal avec donn\'ees initiales des courbes r\'eguli\`eres partout sauf en un point o\`u elles ont un coin. On montre sous des conditions appropri\'ees sur la donn\'ee initiale qu'il existe une unique solution r\'eguli\`ere pour des temps strictement postifs et n\'egatifs. De plus, cette solution satisfait le flot binormal en un sens faible et peut \^etre vue comme une perturbation d'une solution auto-similaire bien choisie. R\'eciproquement, on montre aussi que si \`a temps $t=1$ on prend comme donn\'ee initiale une petite perturbation r\'eguli\`ere d'une solution auto-similaire, alors il existe une unique solution, qui \`a temps $t=0$ est r\'eguli\`ere partout sauf en un point o\`u elle a un coin de m\^eme angle que celui form\'e par la solution auto-similaire. Cette solution peut \^etre prolong\'ee aux temps n\'egatifs. La preuve s'appuie sur les r\'esultats des articles pr\'ec\'edents \cite{GRV}, \cite{BV1}, \cite{BV2} et \cite{BV3} sur l'\'etude des petites perturbations des solutions auto-similaires. Un argument de compacit\'e est utilis\'e pour \'eviter les conditions \`a poids impos\'ees dans \cite{BV3}, ainsi qu'une analyse plus raffin\'ee des asymptotiques en temps et en espace des vecteurs tangent et normaux.
\end{abstract}
\maketitle

\tableofcontents

\section{Introduction}
We consider the binormal flow equation
\begin{equation}\label{binormal}
\chi_t=\chi_x\land\chi_{xx},
\end{equation}
which is a geometric law for the evolution in time of a curve $\chi(t)$ in $\mathbb R^3$, parametrized by arclength $x$. This model has been proposed in 1906 by Da Rios \cite{DaR}, and  rediscovered in 1965 by Arms and Hama \cite{ArHa}, as a model for the evolution of a vortex filament in a 3-D inhomogeneous inviscid fluid (see also \cite{Ri1},\cite{Ri2} for the history of this equation). It was also used as a model for vortex filament dynamics in superfluids (\cite{LD},\cite{LRT},\cite{Bu}). From \eqref{binormal} it follows that the tangent vector $T(t,x)$ satisfies the Schr\"odinger map equation on the sphere $\mathbb S^2$,
\begin{equation}\label{schmap}
T_t=T\land T_{xx}.
\end{equation}
Also using the Frenet equations for the tangent $T$, the normal $n$, and the binormal $b$, equation \eqref{binormal} can be written as
\begin{equation}
\label{binormal1}
\chi_t=cb,
\end{equation}
with $c(t,x)$ denoting the curvature.
Finally, Hasimoto \cite{Ha} showed that if the curvature $c(t,x)$ does not vanish, then the function
\begin{equation}
\label{eq1.2}
\psi(t,x)=c(t,x)e^{i\int_0^x \tau(t,s)\,ds},
\end{equation}
that he calls the filament function, solves the focusing cubic non-linear Schr\"odinger equation (NLS)
\begin{equation}\label{Hasimoto}
i\psi_t+\psi_{xx}+\frac \psi 2\left(|\psi|^2-A(t)\right)=0
\end{equation}
for some real function $A(t)$ that depends on $c(0,t)$ and $\tau(0,t)$. Here $\tau$ stands for the torsion. The non-vanishing constrain on the curvature has been removed by Koiso \cite{Ko}, by using another frame instead of Frenet's one.

In view of this link with the nonlinear Schr\"odinger equation, existence results were given for the initial value problem of the binormal flow with initial data curves with curvature and torsion in high order Sobolev spaces (\cite{Ha},\cite{Ko},\cite{FuMi}). The case of less regular closed curves was considered recently by Jerrard and Smets by using  a weak version of the binormal flow (\cite{JeSm1},\cite{JeSm2}).  Let us mention also that stability of various types of particular solutions of the binormal flow is a subject of current research (see for instance \cite{Cal}, \cite{Laf} and the references therein). Also, to emphasise the great complexity of the binormal flow, we recall that in the case of closed curves, various aspects of evolutions of knoted vortices by the binormal flow are studied using geometric and topological methods (as an example, see \cite{Mag} and the references therein).

We are interested in solutions of \eqref{binormal} that at a given time are regular except at a point  where they have a corner. One can use the invariance of the equation under translations in time and in space and assume without loss of generality that the time is $t=0$ and the corner is located at the origin  $(0,0,0)$. Let us also note that the equation is  reversible in time. This is because if $\chi(t,x)$ is a solution so is $\chi(-t,-x)$.
 
One relevant class of solutions  are the self-similar ones, i.e. those that can be written as
$$\chi(t,x)=\sqrt{t}\,G\left(\frac x{\sqrt{t}}\right)$$ 
for some appropriate $G$.
These solutions have been investigated first by physicists in the 80's. In fact it is rather easy to see that, modulo rotations,  self-similar solutions are a family of curves $\chi_a$ parametrized by $a\in\mathbb R^{+*}$, such that curvature and torsion of $\chi_a(t)$ at $x$ are $\frac{a}{\sqrt{t}}$ and $\frac{x}{2t}$ respectively (\cite{LD},\cite{LRT},\cite{Bu}). From this it is not complicated to conclude that $\chi_a(0)$ has a corner at  $(0,0,0)$. This fact, together with a characterization and detailed asymptotic of the self-similar solutions was proved  in \cite{GRV}. We reformulate part of Theorem 1 of \cite{GRV} as follows. Details will be given in the next section. 

\begin{theorem}\label{sssols}(Description of self-similar solutions \cite{GRV}) Let $A^+$ and $A^-$ be any two distinct non-colinear unitary vectors in $\mathbb R^3$. 
Then, there exists a unique self-similar solution $\chi$ for positive times with initial data at time $t=0$ 
$$\chi(0,x)=\left\{\begin{array}{cc}A^+x,\,x\geq 0,\\A^-x,\, x\leq 0.\end{array}\right.$$
All self-similar solutions are described in this way.
Moreover, if we denote $a\in\mathbb R^{+*}$ such that
$\sin\frac{(\widehat{A^+,-A^-})} 2=e^{-\pi\frac{a^2}{2}}$, $\frac{a}{\sqrt{t}}$ and $\frac{x}{2t}$ are respectively the curvature and the torsion of the curve $\chi(t,x)$. Also, there exist two complex vectors $B^\pm$ orthonormal to $A^\pm$ such that 
$$B^{\pm}=\underset{x\rightarrow \pm\infty}{\lim}(n+ib)(t,x)e^{i\int_0^x\tau(t,s)ds}e^{-ia^2\log\sqrt{t}+ia^2\log|x|}.$$
\end{theorem}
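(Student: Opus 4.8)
The plan is to reduce the self-similar ansatz to a linear ODE for the Frenet frame, solve it, and then extract the spatial asymptotics by a (non)stationary-phase analysis whose \emph{resonant} part produces the logarithmic phase appearing in the limits. First I would insert $\chi(t,x)=\sqrt t\,G(x/\sqrt t)$ into \eqref{binormal}. Writing $\xi=x/\sqrt t$ and using that the arclength parametrisation forces $|G'|=1$, the unit tangent $T=\chi_x=G'$ is a function of $\xi$ alone, and the Schr\"odinger map \eqref{schmap} collapses to the profile equation $-\tfrac\xi2\,T'=T\land T''$, where $'=d/d\xi$. Expanding $T''$ in the Frenet frame $\{T,n,b\}$, with $c$ and $\tau$ the (profile) curvature and torsion, one computes $T\land T''=c'\,b-c\tau\,n$; matching this against $-\tfrac\xi2\,c\,n$ forces $c'=0$ and $\tau=\xi/2$. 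Hence $c\equiv a$ is a positive constant and $\tau(\xi)=\xi/2$, and undoing the rescaling yields the curvature $a/\sqrt t$ and torsion $x/(2t)$ claimed in the statement. This exhibits the single free parameter $a>0$.

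Next I would solve the resulting linear Frenet system. Setting $N=n+ib$, the Serret--Frenet relations become $T'=a\,\mathrm{Re}\,N$ and $N'=-aT-i\tfrac\xi2\,N$; the associated coefficient matrix is antisymmetric, so for any orthonormal frame prescribed at $\xi=0$ there is a unique global solution which stays orthonormal, and $G(\xi)=G(0)+\int_0^\xi T$ reconstructs the curve. Self-similarity propagates this to all $t>0$, and uniqueness of the profile, modulo the rigid motion fixed by the data, follows from uniqueness for the ODE. Since the computation above shows that \emph{every} self-similar solution must have constant curvature and linear torsion, all of them arise in this way.

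The core of the argument is the asymptotic analysis as $\xi\to\pm\infty$. Using the integrating factor $e^{i\xi^2/4}$ one writes $(e^{i\xi^2/4}N)'=-a\,T\,e^{i\xi^2/4}$, and feeding this back into $T'=a\,\mathrm{Re}\,N$ produces oscillatory integrals with phase $\xi^2/4$. A first, soft, stationary-phase estimate shows that $T(\xi)$ converges to unit vectors $A^\pm$ (so that $\chi(0,x)=A^\pm x$, a genuine corner at the origin) and that the correction $T-A^\pm$ is $O(1/\xi)$ and oscillates like $e^{\mp i\xi^2/4}$. The delicate point is that this oscillation is resonant: multiplied by the $e^{i\xi^2/4}$ of the $N$-equation it creates a non-oscillatory $1/\xi$ contribution whose integral diverges logarithmically. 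Tracking the resonance carefully shows that $e^{i\xi^2/4}N$ does not converge but carries a phase $e^{-ia^2\log|\xi|}$, so that at $t=1$ (where $\int_0^x\tau=\xi^2/4$) the combination $N\,e^{i\xi^2/4}e^{ia^2\log|\xi|}$ converges to vectors $B^\pm$ orthonormal to $A^\pm$; restoring the time scaling gives precisely the normalisation in the statement, the $\log\sqrt t$ term accounting for the $t$-dependence.

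Finally I would compute the opening angle. The jump of $T$ across $\xi=0$, equivalently the inner product that fixes $(\widehat{A^+,-A^-})$, is read off from the explicit Fresnel/Gaussian integrals produced by the limits above; their modulus equals $e^{-\pi a^2/2}$, yielding $\sin\frac{(\widehat{A^+,-A^-})}2=e^{-\pi a^2/2}$, and the angle in turn determines $a$ uniquely, closing the correspondence with the data $A^\pm$. The main obstacle is exactly this quantitative step: establishing the existence of the limits $A^\pm,B^\pm$, isolating the logarithmic resonant phase of $N$, and pinning down the exact constant $e^{-\pi a^2/2}$ all require a genuine evaluation of the oscillatory integrals (of parabolic-cylinder / Landau--Zener type) rather than a soft convergence argument.
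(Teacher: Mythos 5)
First, a point of reference: the paper itself does not prove Theorem \ref{sssols}. It is presented as a reformulation of Theorem 1 of \cite{GRV} (see Theorem \ref{thGRV} and the remark that follows it, invoking rotation invariance), and Section 2 only recalls the supporting structure. So your proposal has to be measured against the argument of \cite{GRV}, whose route you have in fact reproduced in outline: the self-similar ansatz forces, via the Frenet equations, constant profile curvature $a$ and torsion $\xi/2$; one then solves the frame ODE and extracts the behaviour at infinity by oscillatory-integral analysis, the resonant interaction producing the logarithmic phase. At that structural level your sketch is the correct one.

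There are, however, two genuine gaps. First, your reconstruction step is wrong as stated: solving the frame system from an arbitrary orthonormal frame at $\xi=0$ and setting $G(\xi)=G(0)+\int_0^\xi T$ does not produce a solution of \eqref{binormal}. The tangent equation $-\frac{\xi}{2}T'=T\land T''$ is only the differentiated form of the binormal profile equation $\frac12\left(G-\xi G'\right)=G'\land G''$; integrating it back gives $G'\land G''-\frac12 G+\frac{\xi}{2}G'\equiv V$ for some constant vector $V$, and $\chi=\sqrt{t}\,G(x/\sqrt{t})$ solves \eqref{binormal} if and only if $V=0$, i.e. $G(0)=2a\,b(0)$. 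This is precisely the paper's formula \eqref{fundamental}: the initial point is slaved to the frame, and the only degree of freedom is the frame at $x=0$; with $G(0)$ left free you get solutions of the binormal flow only up to a constant drift. Second, and this is the heart of the theorem, everything quantitative in your third and fourth paragraphs is deferred rather than proved: the existence of the limits $A^\pm$, $B^\pm$, the isolation of the resonant phase $e^{ia^2\log|\xi|}$, and above all the identity $\sin\frac{(\widehat{A^+,-A^-})}{2}=e^{-\pi a^2/2}$ are asserted, and you acknowledge that they ``require a genuine evaluation of the oscillatory integrals.'' But Theorem \ref{sssols} is an existence-and-uniqueness claim for \emph{arbitrarily prescribed} non-colinear $A^\pm$: to produce the solution one must invert the correspondence $a\mapsto$ angle, which requires this formula (together with its strict monotonicity in $a$) and enough information on which pairs $(A^\pm_a,B^\pm_a)$ occur so that a rotation matches the canonical solution to the given data; uniqueness likewise hinges on it (once $a$ is pinned down, two solutions with the same corner are congruent, and a rotation fixing two non-colinear vectors is the identity). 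Without carrying out that asymptotic evaluation — which is the actual content of Theorem 1 of \cite{GRV}, obtained there by a detailed analysis of the frame ODE — your proposal establishes only a one-parameter family of self-similar solutions with unidentified corner angles, not the bijective correspondence with corner data that the statement claims.
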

Up to a rotation, the coordinates of $A^\pm$ and $B^\pm$ are given explicitly in terms of Gamma functions involving the parameter $a$ (see formula (55), (57), (47), (48), and (69) in \cite{GRV}). 

\begin{figure}
    \includegraphics[width=7cm,height=6cm]{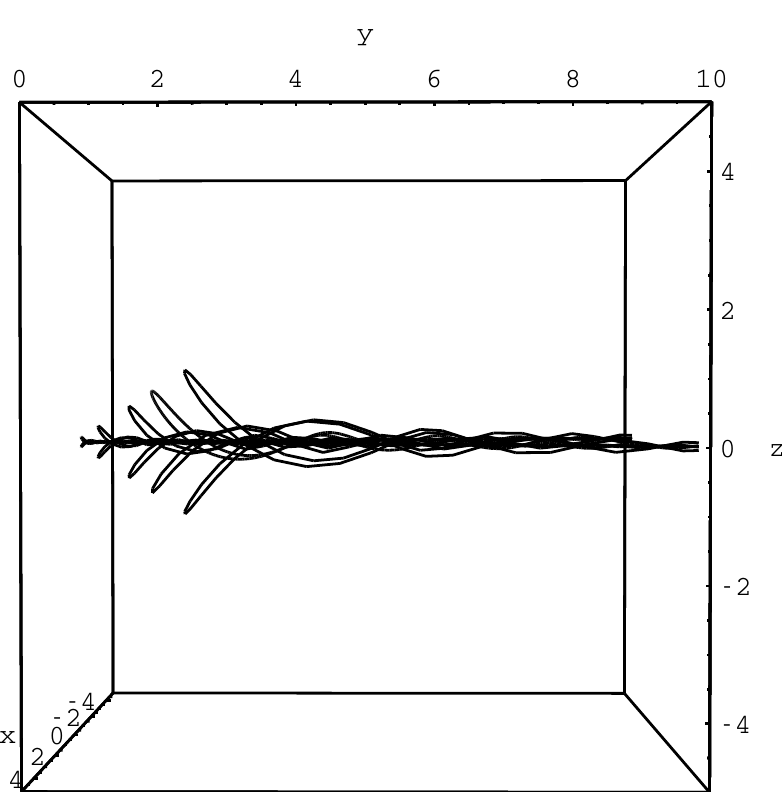} ,\includegraphics[width=7cm,height=6cm]{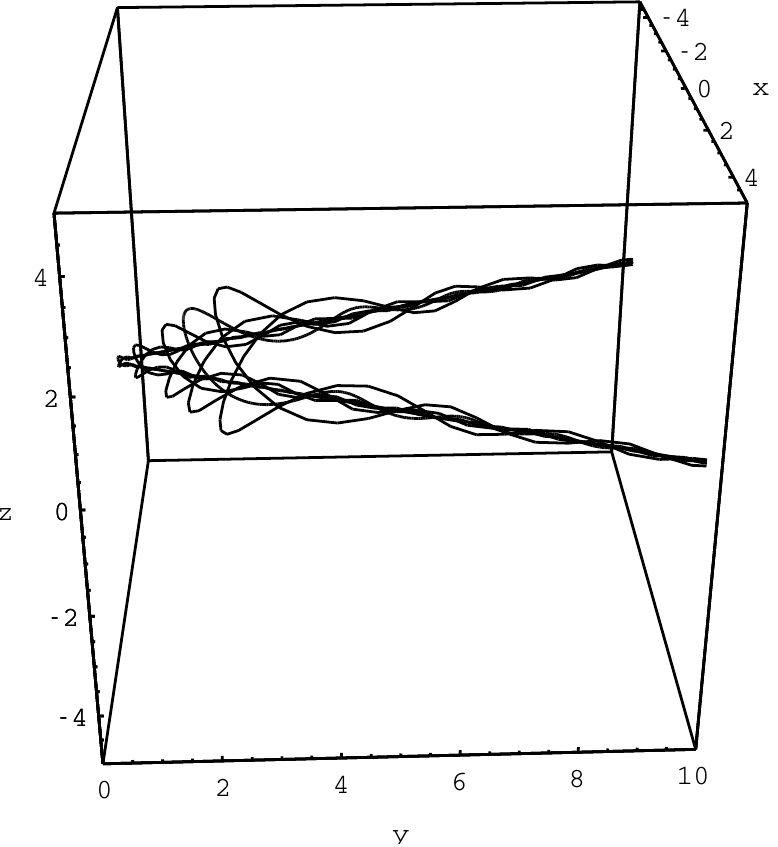} 
  \caption{Self-similar solution for negative and positive times, from two different angles.}
    \label{self-sim}
\end{figure} 

Finally, we want to remark that the solution given in the above theorem can be continued  as a self-similar solution in a unique way  for negative times. This is done as follows. If $\chi$ in the Theorem \ref{sssols} exists for negative times, then $\chi^*(t,x)=\chi(-t,-x)$ with $t>0$ is a solution for positive times with initial data $\chi(0,-x)$. In view of Theorem \ref{sssols} it follows that $\chi^*$ is unique, and it is obtained by a rotation of $\chi$ around the axis given by  the vector $A^+-A^-$ and with angle $\pi$. This rotation can be also  seen as a composition of a reflection with respect to the plane generated by $A^+$ and $A^-$, and a change of the sense of parametrization -see Figure \ref{self-sim}.

Numerical simulations for self-similar solutions have been done by Buttke in \cite{Bu} and by de la Hoz, Garc\'{i}a-Cervera and Vega in \cite{HGCV}, where a similarity at the qualitative level with the flow across a delta wing is emphasized -see Figure \ref{delta wing}.

Last but not the least we mention that the binormal flow and its self-similar solution are used to understand the architecture of the myocardium, as shown by Peskin and Mc.Queen in \cite{PMQ}. \\

\begin{figure}
    \includegraphics[width=16cm,height=6cm]{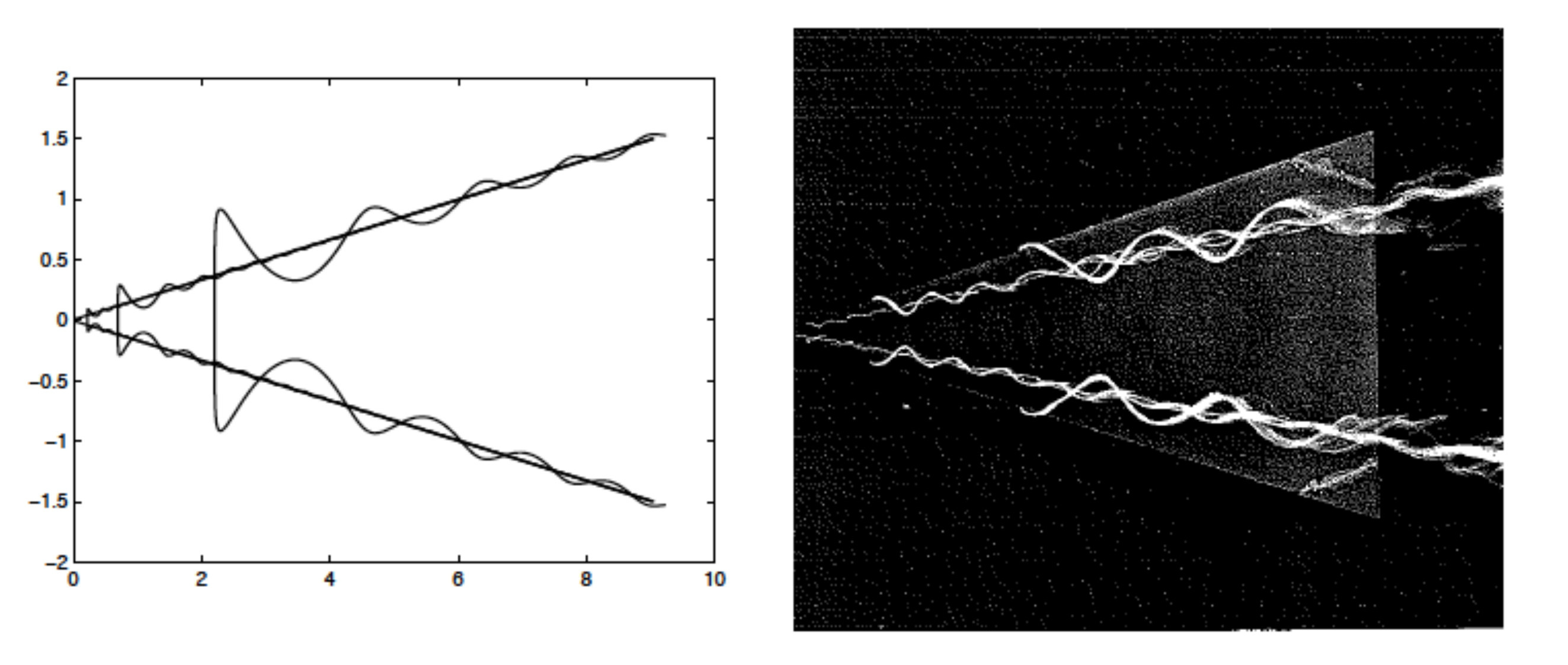} ,
  \caption{Comparison between a self-similar solution of the binormal flow and the experiment of a coloured fluid traversing a delta wing (from \cite{HGCV}).}
    \label{delta wing}
\end{figure} 
Next we are going to recall the results we have obtained in our previous papers \cite{BV1}, \cite{BV2} and \cite{BV3} about the stability of the self-similar solutions. These results will play a fundamental role in the proofs of the theorems we state in this article. 
 
We start noticing  that in the particular case of $\chi_a$ we have that for $t>0$ its filament function is 
\begin{equation}
\label{eq1.3}
\psi_a(x,t)=a\frac {e^{i\frac {x^2}{4t}}}{\sqrt {t}},
\end{equation}
which solves \eqref{Hasimoto} with $A(t)=\frac {|a|^2}{t}$,
\begin{equation}\label{NLSfilament}
i\psi_t+\psi_{xx}+\frac{\psi}{2}\left(|\psi|^2-\frac{a^2}{t}\right)=0.
\end{equation}
Observe that $|\psi_a|^2=a^2/t$ and therefore $\psi_a$ does not belong to $L^2(\mathbb R)$, but is just locally in $L^2$. A simple way of finding a natural function space such that $\psi_a$  belongs to it is to use the so-called  pseudo-conformal transformation 
\begin{equation}\label{calT}
\psi(t,x)=\mathcal {T}v(t,x)=\frac{e^{i\frac{x^2}{4t}}}{\sqrt{t}}\overline{v}\left(\frac 1t,\frac xt\right).
\end{equation}
If $\psi$ is a solution of \eqref{Hasimoto} then $\,v\,$ solves 
\begin{equation}\label{GP1}
iv_t+v_{xx}+\frac {1}{2t}\left(|v|^2-a^2\right)v=0.
\end{equation}
In particular for $\psi_a$  we obtain the constant solution $v_a=a$. This new equation has the associated energy
$$E(t)=\int |v_x|^2-\frac {1}{4t}\left(|v|^2-a^2\right))^2\,dx,$$
with
$\frac{d}{dt} E(t)=-\frac {1}{4t^2}\int\left(|v|^2-a^2\right))^2\,dx,$ and $E(t)=0$ for $v_a$.

We want to consider small perturbations   of $v_a$. Then, we write $v=u+a$ so that $u$ must be a solution of the following equation
\begin{equation}\label{NLS}
iu_t+u_{xx}+\frac{a+u}{2t}(|a+u|^2-a^2)=0.
\end{equation}
As a conclusion, understanding the large time behaviour of the solutions of \eqref{NLS} is equivalent  to understanding the behaviour of the perturbations of $\psi_a$ in \eqref{NLSfilament} at time $t=0$ which in turn is related to the behaviour of small  perturbations of the self-similar solution $\chi_a$ at the time  that the corner is created.
\medskip

In \cite{BV1} we start our study  of the scattering properties for equation \eqref{NLS}. 
In particular we obtain a first result about the existence of the wave operator.  For  $s\in\mathbb{N}^*$ we denote $H^s$ the usual Sobolev space in $\mathbb R$ of $L^2$ functions with  $s$-derivatives in $L^2$, $W^{s,1}$ the space of $L^1$ functions with $s$-derivatives in $L^1$, and  $\dot{H}^{s}$ the corresponding homogeneous space of functions with $s$-derivatives in $L^2$.  Finally $\dot{H}^{-2}$ denotes the set of tempered distributions $\phi$ such that 
\begin{equation}
\label{homog}
\int |\hat \phi(\xi)|^2\frac{d\xi}{|\xi|^4}< \infty.
\end{equation}
 Then, we show in \cite{BV1} that  for any $a$ small and any $f_+$ small in $\dot{H}^{-2}\cap H^{s}\cap W^{s,1}$, there exists a unique $u\in \mathcal{C}([1,\infty),H^s(\mathbb{R}))$ solution of \eqref{NLS} having $f_+$ as asymptotic state: for any $1\leq k\leq s$, 
\begin{equation*}
\sup_{1\leq t}\sqrt{t}\left\|u(t)-e^{i\frac {a^2}2\log t}e^{i(t-1)\partial_x^2}f_+\right\|_{L^2}+\sup_{1\leq t}t\left\|u(t)-e^{i\frac {a^2}2\log t}e^{i(t-1)\partial_x^2}f_+\right\|_{\dot H^k}\leq C(a,f_+).
\end{equation*}
Moreover if $f_+\in \dot{H}^{-2}\cap H^{3}\cap W^{3,1}$ and $x^2f_+\in L^2$ then we construct in \cite{BV1}  
{\it {some}} 
perturbations $\chi(t,x)$ of $\chi_a$, that are solutions of the binormal flow on $0\leq t\leq 1$, and that still have a ``corner" at time $0$. So in \cite{BV1} we proved that the development of a singularity in finite time for the self-similar solutions of \eqref{binormal} is not an isolated phenomena. Although assumption \eqref{homog} is very strong,  we  obtain the extra bonus of proving that  not just the perturbed solution remains close to the self-similar solution but also that the full Frenet frame is close to the starting one. In particular the binormal vectors also remain close and therefore from \eqref{binormal1} we obtain a much more precise information about the velocity of the perturbed filament.\par

 In \cite{BV2} we are able to avoid the assumption \eqref{homog} and the smallness hypothesis on $a$. For doing so we introduce   some function spaces that give special consideration to the low Fourier modes. More concretely we consider
\begin{equation}\label{Xtau}
\|f(x)\|_{X_{t_0}^\gamma}=\frac{1}{t_0^\frac 14}\|f\|_{L^2}+\frac{t_0^\gamma}{\sqrt{t_0}}\||\xi|^{2\gamma}\hat{f}(\xi)\|_{L^\infty(\xi^2\leq 1)}<+\infty
\end{equation}
and ${Y_{t_0}^\gamma}$ the space of
functions 
\begin{equation}\label{Ytau}
\|g(t,x)\|_{Y_{t_0}^\gamma}=\sup_{t\geq t_0}\,\left(\frac{1}{t_0^\frac 14}\|g(t)\|_{L^2}+\left(\frac{t_0}{t}\right)^{a^2}\frac{t_0^\gamma}{\sqrt{t_0}}\||\xi|^{2\gamma}\hat{g}(t,\xi)\|_{L^\infty(\xi^2\leq 1)}\right)<+\infty;
\end{equation}
for simplicity we shall drop in the notations the subindex $t_0$ when $t_0=1$. Let us notice that in view of Lemma 6.1 in \cite{BV3} the following results are valid in spaces $Y^\gamma$ where the power $a^2$ is replaced by any $\delta>0$. 
We have proved global existence and asymptotic completeness for initial data in $X^\gamma$. More precisely, for $s\in\mathbb N$ and for small initial data $\partial^k u(1,x) \in X^\gamma$, $0\leq k\leq s$, with $0<\gamma<\frac 14$, we proved that there exists a unique solution $u$ of \eqref{NLS}, with $\partial^k u\in Y^\gamma\cap L^4((1,\infty),L^\infty)$, and there exists $f_+\in H^s$ for which
\begin{equation*}
\sup_{1\leq t}t^{\frac14-\gamma^+}\left\|u(t)-e^{i\frac {a^2}2\log t}e^{i(t-1)\partial_x^2}f_+\right\|_{H^s}\leq C(a,u(1)),
\end{equation*}
and $\partial ^k f_+$ belongs to 
$X^{\gamma^+}.$ Moreover, we constructed wave operators in the Appendix of \cite{BV2} without smallness assumption on $a$:  for asymptotic states $f_+$ with $\partial^k f_+$ small in $X^\gamma$, there exists a unique solution $u$ of \eqref{NLS}, with $\partial^ku\in Y^{\gamma^+}\cap L^4((1,\infty),L^\infty)$ such that
\begin{equation*}
\sup_{1\leq t}t^{\frac 14-\gamma^+}\left\|u(t)-e^{i\frac {a^2}2\log t}e^{i(t-1)\partial_x^2}f_+\right\|_{H^s}\leq C(a,f_+).
\end{equation*}\\
As a consequence of the asymptotic completeness, at the level of the binormal flow \eqref{binormal} we have obtained in \cite{BV2} that in our functional setting {\it{all}} small perturbations at time $t=1$ of $\chi_a$ will end up generating a singularity in finite time at $t=0$. Nevertheless, we do not get too much geometric information about the trace $\chi(0,x)$ of $\chi(t,x)$  at $t=0$: for instance we do not obtain the behavior of $\chi(0,x)$ near $x=0$.\par
In \cite{BV3} we consider the solutions $u(t)$ constructed in \cite{BV2} via asymptotic completeness,  and look at the corresponding perturbations $\chi(t)$ of a self-similar solution $\chi_a$, started at time $t=1$. Adding the extra assumption   that the initial datum $u(1)$ belongs to an appropriate {\it{weighted space}}  we are able to get a precise asymptotic in space and in time of the tangent and normal vectors of $\chi(t)$. This allows  us to prove the stability of the self-similar structure of $\chi_a(t)$, as well as a complete description of the trace at time $t=0$ of $\chi(t)$. In particular we prove that the same corner as the one of $\chi_a(0)$ is created independently of the perturbation. \par
 Two main questions remain open after the paper \cite{BV3}. One is if it is possible to solve the binormal flow forward in time starting with a datum that has a corner at one point. In other words to prove that the initial value problem is  well posed for data that are regular except at one point where they have a corner. The second one is wether or not when going backward in time, and once the corner has been created, the solution can be continued for negative times. We answer positively to both questions in this paper. The main obstruction we have to bypass is the use that we make in \cite{BV3} of weighted spaces because, as we will see in the Appendix, they are spaces that  the scattering operator of the linearized equation associated to \eqref{NLS} does not leave invariant.

\medskip

Our main results are the following ones.
\begin{theorem}\label{ivp} (The initial value problem) Let $\chi_0$ be a smooth curve of class $\mathcal C^4$, except at $\chi_0(0)=0$ where a corner is located, i.e. that there exist $A^+$ and $A^-$ two distinct non-colinear unitary vectors in $\mathbb R^3$ such that
$$\chi_0'(0^+)=A^+,\quad \chi_0'(0^-)=A^-.$$
We set $a$ to be the parameter of the unique self similar solution of the binormal flow with the same corner as $\chi_0$ at time $0$. 

We suppose $\chi_0$ to be such that its curvature $c(x)$ for $x\neq 0$ satisfies $(1+|x|^4)c(x)\in L^2$ and $|x|^{2\gamma}c(x)\in L^\infty_{(|x|\leq 1)}$ small with respect to $a$  for some $0<\gamma<\frac 14$.

Then there exists 
$$\chi(t,x)\in\mathcal C([-1,1],Lip)\cap \mathcal C([-1,1]\backslash \{0\},\mathcal C^4),$$
regular solution of the binormal flow \eqref{binormal} for $t\in [-1,1]\backslash \{0\}$, having $\chi_0$ as limit at time $t=0$. Above $Lip$ denotes the set of Lipschitz functions.\par 
Moreover, the solution $\chi$ is unique in the subset of $\mathcal C([-1,1],Lip)\cap \mathcal C([-1,1]\backslash \{0\},\mathcal C^4)$ such that the associated filament functions at times $\pm1$ can be written as $(a+u(\pm1,x))e^{i\frac{x^2}{4}}$ with $u(\pm1)$ small in $X^\gamma\cap H^4$ with respect to $a$ for some $0<\gamma<\frac 14$.

This solution enjoys the following properties:\\
i) There exists a constant $C>0$ such that for $t\in[-1,1]$ we have the rate of convergence 
\begin{equation}\label{convbin}\sup_x |\chi(t,x)-\chi_0(x)|\leq C\sqrt{|t|}.
\end{equation}
ii) For all fixed $t_1, t_2\in[-1,1]\backslash \{0\}$ the following asymptotic properties hold  
\begin{equation}\label{convinfinity} \chi(t_1,x)-\chi(t_2,x)=\mathcal O\left(\frac{1}{x}\right),\qquad T(t_1,x)- T(t_2,x)=\mathcal O\left(\frac{1}{x}\right).\end{equation}
Moreover, there exists $T^\infty\in\mathbb S^2$ such that uniformly in $t\in[-1,1]$, for $x$ positive,
\begin{equation}\label{convinfinityT} T(t,x)-T^\infty=\mathcal O\left(\frac{1}{\sqrt{x}}\right).\end{equation}
iii) $\chi$ is a solution of the binormal flow for $t\in [-1,1]$ in the following weak sense
\begin{equation}\label{binweak}\int_{-1}^1\int \chi_t(t,x)\,\phi (t,x)\,dx\,dt =\int_{-1}^1\int \chi_x(t,x)\land\chi_{xx}(t,x)\,\phi (t,x)\,dx\,dt <\infty,
\end{equation}
for all test functions $\phi\in\mathcal{C}_0^\infty(\mathbb R^2)$. \\
iv) The tangent vector $T=\chi_x$ satisfies \eqref{schmap} for $t\in [-1,1]\backslash \{0\}$ and tends at $t=0$ to $T(0)=\chi'_0(x)=T_0$  for $x\neq 0$, with a rate of decay
\begin{equation}\label{convT}
\sup_{|x|>\epsilon>0}|T(t,x)-T_0(x)|\leq C_\epsilon |t|^{\frac 16^-}.
\end{equation}
v) The tangent vector $T$ is a solution of \eqref{schmap}  through $t=0$ in the following weak sense
\begin{equation}\label{Tweak}\int_{-1}^1\int T(t,x)\,\phi_t (t,x)\,dx\,dt =\int_{-1}^1\int T(t,x)\land T_{x}(t,x)\,\phi_x (t,x)\,dx\,dt <\infty,
\end{equation}
for all test functions $\phi\in\mathcal{C}_0^\infty(\mathbb R^2)$. 
\end{theorem}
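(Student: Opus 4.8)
The plan is to read the singular time $t=0$ of the binormal flow as the infinite-time limit of the semilinear equation \eqref{NLS}, through the pseudo-conformal transform \eqref{calT}. Writing $\psi=\mathcal T v$ with $v=a+u$, one has $\psi(t,x)=\frac{e^{ix^2/4t}}{\sqrt t}\overline{(a+u)}(1/t,x/t)$, so that the creation of the corner as $t\to 0^+$ is governed by the behaviour of $u(\tilde t)$ as $\tilde t=1/t\to+\infty$. Accordingly, the corner data $\chi_0$ should be understood as prescribing an asymptotic state $f_+$ for \eqref{NLS}. First I would extract from $\chi_0$ the self-similar parameter $a$ from the angle of the corner via Theorem \ref{sssols}, and then associate to $\chi_0$ an asymptotic state $f_+$; the hypotheses $(1+|x|^4)c(x)\in L^2$ and $|x|^{2\gamma}c(x)\in L^\infty_{(|x|\le1)}$ small translate, by the Fourier-type duality between decay of the curvature and smoothness of the scattering state, into $\partial^k f_+$ small in $X^\gamma$ for $0\le k\le 4$ together with $f_+\in H^4$. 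It is important to note that this gives good control of $f_+$ but \emph{not} the weighted bound on $u(1)$ exploited in \cite{BV3}: as the Appendix shows, the scattering operator does not preserve that weight, which is the precise source of the difficulty.

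With $f_+$ at hand I would invoke the wave operator of \cite{BV2} to obtain a unique $u$ with $\partial^k u\in Y^{\gamma^+}\cap L^4((1,\infty),L^\infty)$, $0\le k\le 4$, solving \eqref{NLS} on $[1,\infty)$ with asymptotic state $f_+$. Undoing \eqref{calT} produces a filament function $\psi$ on $(0,1]$, and integrating the associated Frenet system reconstructs $\chi(t,x)$ for $t\in(0,1]$; the rigid motion left undetermined by this reconstruction is fixed by matching the limiting directions $A^\pm$ of $\chi_0$. The branch for $t\in[-1,0)$ is produced in the same way after applying the reversibility $\chi(t,x)\mapsto\chi(-t,-x)$ noted in the introduction to the flipped corner data $\chi_0(-x)$; since the corner angle, and hence $a$, is unchanged, the same self-similar profile governs both branches. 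The quantitative statements \eqref{convbin}, \eqref{convinfinity}--\eqref{convinfinityT} and \eqref{convT} would then follow from the $Y^{\gamma^+}$ and $H^4$ bounds on $u$, combined with a refined analysis of the time and space asymptotics of the tangent vector $T$ and of the normal vector.

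The main obstacle is to prove that the reconstructed $\chi$ actually attains the trace $\chi_0$ at $t=0$ without the weighted control that made this automatic in \cite{BV3}. Here I would run a compactness argument: approximate $\chi_0$ by a sequence of corner data $\chi_0^{(n)}$ that in addition satisfy the weighted hypotheses of \cite{BV3}, so that each produces a solution $\chi^{(n)}$ whose trace at $t=0$ is fully described and equals $\chi_0^{(n)}$. The crucial point is to establish bounds on $\chi^{(n)}$ and on its tangent and normal vectors that are uniform in $n$ and depend only on the weight-free norms $X^\gamma\cap H^4$; the refined asymptotics of $T^{(n)}$ in time and space then supply enough equicontinuity to extract, by Arzel\`a--Ascoli, a subsequence converging in $\mathcal C([-1,1],Lip)\cap\mathcal C([-1,1]\setminus\{0\},\mathcal C^4)$ to the desired $\chi$. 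I expect the delicate step to be showing that this limit retains the corner of $\chi_0$ with the correct angle and obeys the rate \eqref{convbin}, since the pointwise information that the weighted assumption provided for free must now be recovered from uniform bounds alone.

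Finally, the weak formulations \eqref{binweak} and \eqref{Tweak} through $t=0$ would be obtained by gluing the two one-sided regular solutions across the singular time. Since $\chi_t=cb$ with curvature $c\sim|t|^{-1/2}$ near $t=0$, the velocity $|t|^{-1/2}$ is integrable near $0$, and the $L^2$ control of $c$ from the $Y^{\gamma^+}$ bound makes the integrals in \eqref{binweak}--\eqref{Tweak} finite; the same integrability yields \eqref{convbin}. Uniqueness in the stated class reduces to the uniqueness of $u$ in $Y^\gamma$ from \cite{BV2}: the filament functions at $t=\pm1$ determine $u(\pm1)$, the Hasimoto correspondence determines $\chi$ up to a rigid motion, and that motion is pinned down by the common corner shared with the self-similar solution.
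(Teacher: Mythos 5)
Your construction pipeline coincides with the paper's: from the corner data one builds the parallel-frame system, hence $g$ with $|g|=c$, hence the asymptotic state $f_+=\mathcal F^{-1}\left(g(2\cdot)e^{ia^2\log|2\cdot|}\right)$, applies the wave operator of \cite{BV2}, reconstructs $\chi$ on $(0,1]$, and produces the negative-time branch by the reversal applied to the flipped trace; properties i)--v) are then obtained as in the paper. The genuine gap is in your treatment of the central difficulty, the identification of the trace at $t=0$. Your approximation-plus-Arzel\`a--Ascoli scheme cannot be set up as stated, for two reasons. First, the weighted hypothesis of \cite{BV3} is a condition on $u(1)$, the NLS data at time $t=1$, not on the curve at time $t=0$; moreover \cite{BV3} solves the \emph{forward} problem (given weighted $u(1)$, describe the trace of the resulting curve), not the initial value problem (prescribe the trace). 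So ``corner data $\chi_0^{(n)}$ satisfying the weighted hypotheses of \cite{BV3}'' is not a meaningful approximating class, and it is not true that such data ``produces a solution whose trace equals $\chi_0^{(n)}$''. Second, for any corner data, however nice, the only route to a solution with that \emph{prescribed} trace is through the asymptotic state $f_+^{(n)}$ and the wave operator, and at that point the weighted bound on $u^{(n)}(1)$ is exactly what is unavailable: the Appendix shows that keeping $J(t)\omega(t)$ bounded in $L^2$ requires a vanishing condition at the zero Fourier mode of the asymptotic state which the nonlinear flow \eqref{NLS} does not preserve. Your scheme therefore runs head-on into the very obstruction you correctly identify in your first paragraph. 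Finally, even granting an approximating family, the step you yourself flag as delicate --- identifying the corner and the rate \eqref{convbin} in the limit --- would require bounds near $t=0$ that are uniform in the weight-free norms alone, i.e.\ precisely the analysis the detour was meant to avoid.

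What the paper does instead is direct, and the ``compactness argument'' of the abstract is far more modest than yours. Weights entered \cite{BV3} at only two points, and both are removed by hand. For the time-independence of the spatial limit of $\tilde N$, reduced to \eqref{limit}, the paper uses a contradiction argument in the \emph{time} variable (Proposition~\ref{propas}): extract $t_n\to t_0$ and use the continuity of $u$ with values in $H^1$ --- that is the compactness. For the iteration of \eqref{Tasy}--\eqref{Nasy}, the weighted integral estimate of \cite{BV3} is replaced by Lemma~\ref{restest}, which exploits that $h(t,s)$ carries both the derivative $\partial_s u$ and the oscillation $e^{-is^2/4t}$, so that integration by parts on $[x,1]$ substitutes for weights; then $h$ is replaced by the time-independent $\tilde h$, the resulting series is summed (Proposition~\ref{proptanglim}), and $T(0,\cdot)$ is identified as the solution of the ODE system \eqref{eqT0} with data $(A^\pm,B^\pm)$, which coincides with $T_0$ because $\tilde h=ig$. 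Note also that your uniqueness reduction is too quick: the uniqueness class only constrains the \emph{form} of the filament functions at $t=\pm1$, it does not assume the two solutions share them; the paper must first prove that a common trace at $t=0$ forces a common asymptotic state (the Duhamel/test-function argument and Gronwall in the uniqueness subsection of Section~\ref{sectivp}), and only then invoke the uniqueness of the wave operator of \cite{BV2}.
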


\begin{theorem}\label{cont}(Continuation of solutions through the singularity time) Let $\chi(1)$ be a small perturbation of a self-similar solution $\chi_a$ at time $t=1$ in the sense that the filament function \eqref{eq1.2} of $\chi(1)$ is $(a+u(1,x))e^{i\frac{x^2}{4}}$, with $u(1)$ small in $X^\gamma\cap H^4$ with respect to $a$ for some $0<\gamma<\frac 14$. Then, we can construct a regular solution $\chi$ for the binormal flow \eqref{binormal} on $ t\in [-1,1]\backslash \{0\}$, having at time $t=0$ a limit $\chi_0$ and enjoying the properties i)-v) of Theorem \ref{ivp}. 
Moreover, the corner of the self-similar solution is recovered: $\partial_s\chi(0,0^\pm)=\partial_s\chi_a(0,0^\pm)$.\par
This solution $\chi$ is unique in the subset of $\mathcal C([-1,1],Lip)\cap \mathcal C([-1,1]\backslash \{0\},\mathcal C^4)$ such that the associated filament functions at times $\pm1$ can be written as $(a+u(\pm1,x))e^{i\frac{x^2}{4}}$ with $u(\pm1)$ small in $X^\gamma\cap H^4$ with respect to $a$ for some $0<\gamma<\frac 14$. \end{theorem}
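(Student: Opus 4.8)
The plan is to prove Theorem \ref{cont} as essentially a backward-in-time restatement of Theorem \ref{ivp}, combined with the forward machinery developed in \cite{BV2} and \cite{BV3}. The starting point is the given datum at $t=1$: the filament function is $(a+u(1,x))e^{i\frac{x^2}{4}}$ with $u(1)$ small in $X^\gamma\cap H^4$. Passing through the pseudo-conformal transformation \eqref{calT}, this perturbation $u(1)$ of the constant $v_a=a$ becomes exactly the kind of initial data for \eqref{NLS} that the global existence and asymptotic completeness result of \cite{BV2} handles: for small $\partial^k u(1)\in X^\gamma$, $0\le k\le s$, one obtains a unique solution $u\in Y^\gamma\cap L^4((1,\infty),L^\infty)$ together with an asymptotic state $f_+\in X^{\gamma^+}\cap H^s$. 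First I would invoke this to get, globally in $t\ge 1$ for the rescaled equation, the NLS solution and its scattering data. The scaling built into \eqref{calT} sends large $t$ in the $v$-equation to $t$ near $0$ in the original $\psi$-equation, so control of $u(t)$ as $t\to\infty$ is precisely control of the binormal flow as the corner is formed at time $0$.

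The main new point, relative to \cite{BV3}, is to recover the full geometric conclusions (the creation of a corner with the same angle as $\chi_a$, the trace $\chi_0$ at $t=0$, and properties i)--v)) \emph{without} the weighted-space hypothesis used there. Here I would reconstruct the tangent, normal, and binormal vectors from the filament function and run the asymptotic analysis of \cite{BV3} on the frame. The obstruction flagged in the introduction is that the scattering operator of the linearized equation does not preserve the weighted spaces, so I cannot simply assume $x^2 u(1)\in L^2$ propagates. Instead the plan is to replace the weighted bound by a compactness argument: approximate the datum $u(1)$ by a sequence $u_n(1)$ lying in the weighted space, to which the full strength of \cite{BV3} applies directly, yielding frames $(T_n,n_n,b_n)$ and limiting curves with the self-similar corner; then pass to the limit using uniform bounds in $Y^\gamma\cap L^4L^\infty$ (which are weight-free and stable under the approximation) together with the refined asymptotics \eqref{convinfinity}--\eqref{convinfinityT} of the tangent and normal vectors. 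The uniform $\mathcal O(1/x)$ and $\mathcal O(1/\sqrt{x}\,)$ decay is what guarantees the limit curve still has a genuine corner and that the angle is preserved in the limit.

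Once the solution is built on $t\in[0,1]$ (equivalently $t\ge 1$ for $v$) with a well-defined trace $\chi_0$ at $t=0$ satisfying $\partial_s\chi(0,0^\pm)=\partial_s\chi_a(0,0^\pm)$, the continuation to negative times is the reversibility argument already recorded in the introduction: since $\chi(t,x)$ solves \eqref{binormal} iff $\chi(-t,-x)$ does, the trace $\chi_0$ now plays the role of an initial datum with a corner, and I would feed $\chi_0$ into Theorem \ref{ivp} to obtain the solution on $[-1,0]$. This is legitimate because the trace we constructed is precisely of the class for which Theorem \ref{ivp} applies: its curvature satisfies $(1+|x|^4)c(x)\in L^2$ and $|x|^{2\gamma}c(x)\in L^\infty_{(|x|\le 1)}$ small, these bounds being exactly what the $X^\gamma\cap H^4$ smallness of $u(\pm1)$ transfers to through \eqref{eq1.2} and \eqref{calT}. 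Properties i)--v) are then inherited verbatim from Theorem \ref{ivp}, and the weak formulations \eqref{binweak}, \eqref{Tweak} hold across $t=0$ by the same limiting passage on each side.

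The hard part, as anticipated, is the compactness step: I must show that the weight-free $Y^\gamma$ and $L^4L^\infty$ estimates, uniform along the approximating sequence, are strong enough to pass to the limit in the frame asymptotics that in \cite{BV3} genuinely relied on the weight. The delicate quantity is the phase $e^{ia^2\log|x|}$ appearing in the limit of $(n+ib)$ in Theorem \ref{sssols}: controlling its convergence, and hence the persistence of a single well-defined corner angle rather than a smeared-out singularity, requires the more refined space-time asymptotics of $T$ and $n$ mentioned in the abstract, and this is where the bulk of the technical work will lie. Uniqueness in the stated subclass follows, on each time interval, from the uniqueness already established for \eqref{NLS} in $Y^\gamma$ transported back through \eqref{calT} and \eqref{eq1.2}.
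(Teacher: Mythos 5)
Your overall architecture (invoke the scattering theory of \cite{BV2} for the NLS solution $u(t)$, recover the geometric conclusions of \cite{BV3} without weights, then continue to negative times by the reversibility $\chi(t,x)\mapsto\chi(-t,-x)$ applied to the trace $\chi_0$) matches the paper, whose proof of Theorem \ref{cont} is literally: positive times are the main result of \cite{BV3} with the weighted hypotheses removed by the analysis done for Theorem \ref{ivp}, and negative times follow from the continuation argument of \S\ref{sectext}. The uniqueness step and the negative-time step in your proposal are essentially the paper's.

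However, there is a genuine gap at the central step, and it is precisely the step you yourself flag as ``the hard part.'' Your plan to remove the weights is to approximate $u(1)$ by data $u_n(1)$ lying in weighted spaces, apply \cite{BV3} to each $u_n(1)$, and pass to the limit using the weight-free uniform bounds in $Y^\gamma\cap L^4 L^\infty$. This scheme does not close: the conclusions of \cite{BV3} that you need to pass to the limit (existence of the trace $T_n(0,x)$, the rate of convergence as $t\to 0$, the corner angle) come with constants depending on the weighted norms, e.g.\ the key estimate $\int_x^\infty|h(t,s)|ds\leq C_3+C_4 t^{1/4}/x$ of Lemma 4.1 in \cite{BV3} has $C_4=C(a)(C_0+\|xu(1)\|_{L^2})$. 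Along your approximating sequence $\|xu_n(1)\|_{L^2}$ blows up (otherwise $u(1)$ would already be in the weighted space), so the frame asymptotics for the approximants are not uniform in $n$, and nothing guarantees the limit curve has a trace at $t=0$, let alone a corner with the right angle. The weight-free $Y^\gamma\cap L^4L^\infty$ bounds alone do not control the iterated integral expansion of the frame; producing weight-free control of exactly those quantities is the actual new content required, and your proposal defers it rather than supplies it.

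The paper's resolution is not a limiting argument on approximating data at all. Its ``compactness argument'' (the one advertised in the abstract) is a sequential-compactness contradiction argument used inside the proof of Proposition \ref{propas} to show, with no weights, the single limit $\int_t^1|u(1/t',x/t')|\,dt'/t'\to 0$ as $x\to+\infty$ (using only $u\in\mathcal C([1,1/t],H^1)$), which is the one place \cite{BV3} used weights to prove the frame asymptotics \eqref{Tasy}--\eqref{Nasy}. The second, and decisive, ingredient is Lemma \ref{restest}: the iterated multiple integrals in the expansion of $T(t,x)$ are bounded purely in terms of $\|u(1/t)\|_{H^1}$ by integrating by parts in space and exploiting that $h(t,s)=e^{-is^2/4t}\frac{2}{s\sqrt t}u_s(\frac1t,\frac st)e^{-i\Phi(t,s)}$ contains a full derivative $u_s$. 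These two devices replace the weighted estimate of \cite{BV3} outright, after which the trace, the ODE system \eqref{eqT0}, and the recovery of the corner via \S 5 of \cite{BV3} (which never used weights) go through. Without an analogue of these two estimates, your limiting passage cannot be justified.
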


Let us briefly explain the proof of Theorem \ref{ivp}. We recall the notation $B^\pm$ for the complex vector appearing in the asymptotics of the normals vectors of the unique self similar solution of the binormal flow with the same corner as $\chi_0$ at time $0$ (see Theorem \ref{sssols}). We denote $T_0=\chi_0'$. We define for $x>0$  a complex-valued function $g$ and a $\mathbb C^3$-valued function $\tilde N_0$ orthonormal to $T_0$ by solving  the system 
\begin{equation}\label{systinitial}
\left\{\begin{array}{c}T_{0x}(x)=\Re (g(x)\tilde N_0(x)),\\ \tilde N_{0x}(x)=-\overline{g}(x)T_0(x),
\end{array}\right.\end{equation}
with initial data $(A^+,B^+)$. We define $g(x)$ and $\tilde N_0$ similarly for $x<0$ imposing $(A^-,B^-)$ as initial data in \eqref{systinitial}. In particular we have the following link with the curvature of $\chi_0$: $|g(x)|=c(x)$. Therefore $(1+|x|^4)g(x)\in L^2$ and $|x|^{2\gamma}g(x)\in L^\infty_{(x^2\leq 1)}$ are small with respect to $a$. Next we define 
$$f_+=\mathcal F^{-1}\left(g(2\cdot)e^{ia^2\log |2\cdot|}\right).$$ 
In particular $f_+$ and its first four derivatives are small in 
$X_1^\gamma$ with respect to $a$. This allows us to obtain $u(t)$ the solution of \eqref{NLS} with asymptotic state $f_+$, given by the construction of wave operators in \cite{BV2}. We set $\chi$ to be the corresponding binormal flow solution (for the construction see for instance the Appendix of \cite{BV1}). It was also showed in \cite{BV2} that $u(1)\in X^{\gamma^+}$. We shall prove that we can carry on $u(t)$ the computations done in \cite{BV3}, so we can define for $\chi$ a trace at time zero $T(0,x)$. We recall that in \cite{BV3} we were working with solutions $u(t)$ generated by initial data at finite time $t=1$.  We used that the weight condition $x^2u(t)\in L^2$ holds, something that is satisfied if we assume it at initial time $t=1$. But now we have to go backwards in time from the asymptotic state $f_+$ to the solution $u(t)$ and as we already said there seems to be a serious obstruction for showing that in this case $u(t)$ is in weighted spaces; we give the details in the Appendix. In \S \ref{sectcond}-\ref{sectT0} we shall perform on $u(t)$ the computations done in \cite{BV3}, in such a way that we can avoid the assumptions on weights. Finally we shall prove in \S \ref{sectT0bis} that the trace $T(0,x)$ coincides with $T_0(x)$, which will give us the solution of Theorem \ref{ivp} for $t\geq 0$. Our uniqueness result rely on the existence and uniqueness of the solution of the associated
Frenet system and NLS equations. For negative times we shall do the same, starting from
$$\mathcal F^{-1}\left(\bar g(-2\cdot)e^{ia^2\log |2\cdot|}\right).$$ 
We shall find similarly a solution $\chi^*$ of the binormal flow with initial data  $\chi^*(0,x)=\chi(0,-x)$. Then for negative times we shall set $\chi(-t,x)=\chi^*(-t,x)$ to obtain the solution in Theorem \ref{ivp} on $[-1,1]$.

Concerning Theorem \ref{cont} we recall that its part concerning positive times $t\geq 0$ was the main result in \cite{BV3}, under the assumption that weighted conditions are satisfied for $u(1)$. As we have said, in \S \ref{sectcond}-\ref{sectT0} we shall remove these conditions. For extending $\chi$ to negative times, we shall proceed as explained above for Theorem \ref{ivp}.

\medskip

The paper is organized as follows. In the following section we shall recall the results in \cite{GRV} on self-similar solutions of the binormal and describe the continuation through time $0$.  In section \S \ref{sectivp} we shall give the proof of Theorem \ref{ivp} while Theorem \ref{cont} will be treated in section \S \ref{sectcont}. The Appendix will contain results and remarks on the equation \eqref{NLS} in weighted spaces, via the so-called $J$-operators, $J(t)=x+it\partial_x$.

\medskip 
{\bf{Acknowledgements:}} First author was partially supported by the French ANR projects  R.A.S. ANR-08-JCJC-0124-01 and SchEq ANR-12-JS-0005-01. The second author was partially supported by the grants UFI 11/52, MTM 2007-62186 of MEC (Spain) and FEDER.\\

\section{Self-similar solutions of the binormal flow}
In this section we review the known results on self-similar solutions of the binormal flow, i.e.
$$\chi(t,x)=\sqrt{t}\,G\left(\frac x{\sqrt{t}}\right),\qquad t\geq 0,$$
and focus on the issue of their possible extension  for negative times.
These solutions have been investigated first by physicists in the 80's (\cite{LD}, {\cite{LRT}, \cite{Bu}). Using the Frenet equations they observed that, modulo rotations,  self-similar solutions form a one parameter family $\chi_a$ of curves with $a\in\mathbb R^{+*}$ such that the curvature and the torsion of $\chi_a(t)$ at $x$ are $\frac{a}{\sqrt{t}}$ and $\frac{x}{2t}$ respectively. The mathematical rigorous description was given in \cite{GRV}.  Using the expressions of the derivative in time of the tangent and normal vectors of a solution of the binormal flow, one gets that $(T_a,n_a,b_a)(t,0)$ is constant in time. Therefore by integrating the binormal flow at $t=0$ it follows that 
\begin{equation}
\label{fundamental}\chi_a(t,0)=2a\sqrt{t}\,b_a(t,0). 
\end{equation}
In particular, the curve profile $G_a(x)$ satisfies $G_a(0)=2a\,b_a(t,0)$, so the only degree of freedom in constructing a self-similar solution is in the choice of the Frenet frame at $x=0$. Theorem 1 of \cite{GRV} states that given $a\in\mathbb R^{+*}$ there exists a unique frame $(T_a,n_a,b_a)(t,x)$ solution of the Frenet system of equations
\begin{equation}\label{frenet}
\left\{\begin{array}{c}T_{x}= cn,\\ 
 n_{x}=-cT+\tau b\\
 b_x=-\tau b,
\end{array}\right.\end{equation}
with  the curvature and the torsion $(c_a,\tau_a)(t,x)= (\frac{a}{\sqrt{t}},\frac{x}{2t})$ and taking the canonical basis of $\mathbb R^3$ as the initial data at $(t,0)$.  As a consequence there is a unique self-similar solution $\chi_a$ of the binormal flow such that its Frenet frame at $x=0$ is the canonical basis of $\mathbb R^3$. This solution is written as 
\begin{equation}
\label {chi}
\chi_a(t,x)=\sqrt{t}\,G_a\left(\frac x{\sqrt{t}}\right),\qquad t\geq 0,
\end{equation}
with the profile $G_a(\frac x{\sqrt{t}})=2ab_a(0,0)+\int_0^\frac x{\sqrt{t}} T(t,s)\,dx$. Then \eqref{fundamental} becomes
\begin{equation}
\label{fundamental1}\chi_a(t,0)=2a\sqrt{t}(0,0,1), \qquad t\geq0. 
\end{equation}

Moreover, in \cite{GRV} a precise description of the profile $G_a(s)$ is given for large $|s|$ (here $s$ stands for the self-similar variable: $s=\frac{x}{\sqrt{t}}$). This aymptotic plays a crucial role in the proof of Theorem 1.1 in \cite{BV3} as well as in the proofs of Theorem \ref{ivp} and Theorem \ref{cont} of this paper.  More concretely in \cite{GRV} the following result is proved.\begin{theorem}\label{thGRV}(\cite{GRV}) Given $a>0$ then the family of curves
$$\chi_a(t,x)=\sqrt{t}\,G_a\left(\frac x{\sqrt{t}}\right),$$
with $G_a$ given in \eqref{chi} (i.e. the Frenet frame $(T_a,n_a,b_a)$ at $x=0$ is the canonical orthonormal basis of $\mathbb R^3$ ) is a solution of the binormal flow which is real analytic for $t>0$. Moreover, there exist $A^\pm_a$ and $B^\pm_a$ such that\\
(i)$$\quad |\chi_a(t,x)-A^+ \,x\, {\bf{1}}_{[0,\infty[}(x)-A^- \,x\, {\bf{1}}_{]-\infty,0]}(x)|\leq a\sqrt{t},$$
(ii) The following asymptotics hold, for $s\rightarrow \pm\infty$:
$$G_a(s)=A^\pm_a\left(s+\frac {2a^2}{s}\right)-\frac{4a}{s^2}\,n_a+\mathcal O\left(\frac{1}{s^3}\right),$$
$$T_a(s)=A^\pm_a-\frac{2a}{s}\,b_a+\mathcal O\left(\frac{1}{s^2}\right),$$
$$(n_a-ib_a)(s)=B^\pm_a\,e^{i\frac{s^2}{4t}}\,e^{ia^2\log |s|}+\mathcal O\left(\frac{1}{s}\right),$$
(iii) The real vectors $A^\pm_a=(A^\pm_{a,1},A^\pm_{a,2},A^\pm_{a,3})$ are unitary and
$$A^+_{a,1}=A^-_{a,1}=e^{-\pi\frac{a^2}{2}}, \quad A^+_{a,2}=-A^-_{a,2}, \quad A^+_{a,3}=-A^-_{a,3},\quad A_a^\pm\perp B_a^\pm,$$ 
(iv)  The complex vectors $B^\pm_a=(B^\pm_{a,1},B^\pm_{a,2},B^\pm_{a,3})$ verify $|\Re B^\pm_a|=|\Im B^\pm_a|=1$ and
\begin{equation}
\label{Bs}B^+_{a,1}=-B^-_{a,1}, \quad B^+_{a,2}=B^-_{a,2}, \quad B^+_{a,3}=B^-_{a,3},
\end{equation}
(v) The angle of the corner of $\chi_a(0)$ is determined by
$$\sin\frac{(\widehat{A^+,-A^-})} 2=A^\pm_{a,1}=e^{-\pi\frac{a^2}{2}}.$$
\end{theorem}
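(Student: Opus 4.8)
The plan is to reduce the self-similar ansatz to a linear system of ODEs in the variable $s=x/\sqrt{t}$, to deduce real-analyticity from standard ODE theory together with the conserved quantities that keep the frame orthonormal, and then to obtain the asymptotics (i)--(v) from a careful analysis of the resulting oscillatory system. First I would pass to the parallel (Hasimoto) frame: setting $N=(n-ib)\,e^{-i\int_0^x\tau\,ds}$ and $\psi=c\,e^{i\int_0^x\tau\,ds}$, the Frenet system \eqref{frenet} becomes $T_x=\Re(\psi N)$, $N_x=-\overline{\psi}\,T$. For the self-similar candidate one computes $\int_0^x\tau=\frac{x^2}{4t}=\frac{s^2}{4}$ and $\psi_a=a\,e^{is^2/4}/\sqrt{t}$, so writing $T(t,x)=T(s)$ and $N(t,x)=N(s)$ the equations close into
\begin{equation*}
T'(s)=a\,\Re\!\big(e^{is^2/4}N(s)\big),\qquad N'(s)=-a\,e^{-is^2/4}\,T(s),
\end{equation*}
with canonical initial data $T(0)=(1,0,0)$, $N(0)=(0,1,-i)$ dictated by the choice of frame in \eqref{chi}. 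The quantities $|T|^2=1$, $N\cdot N=0$, $N\cdot\overline{N}=2$ and $T\cdot N=0$ are conserved, so $(T,n,b)$ stays a genuine Frenet frame; integrating $T$ recovers $G_a$, and $\chi_a=\sqrt{t}\,G_a(s)$ solves \eqref{binormal} by construction. Since the coefficients $e^{\pm is^2/4}$ are entire in $s$ and $s=x/\sqrt{t}$ is real-analytic for $t>0$, the solution is real-analytic there, which gives the first assertion.

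The core of the argument, and the step I expect to be the main obstacle, is the asymptotics as $s\to\pm\infty$. A convenient device is to differentiate once more and introduce the real vector $R:=\Im(e^{is^2/4}N)$; a direct computation turns the system into the decoupled second-order form
\begin{equation*}
T''(s)=-a^2\,T(s)-\frac{as}{2}\,R(s),\qquad R'(s)=\frac{s}{2a}\,T'(s),
\end{equation*}
so that each Cartesian component $u=T_j$ satisfies a scalar linear ODE of confluent-hypergeometric (parabolic-cylinder) type. The point is that its large-$s$ solutions split into a non-oscillatory branch, producing the constant limit $T\to A^\pm_a$ with a $2a^2/s$ correction, and an oscillatory branch $\sim e^{is^2/4}\,s^{ia^2-1}$. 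The delicate feature is precisely the logarithmic phase $s^{ia^2}=e^{ia^2\log|s|}$ in the normal asymptotics: it is the first subleading WKB/transport correction to the $e^{is^2/4}$ oscillation and is the exact analogue of the $e^{ia^2\log t}$ phase appearing in the scattering for \eqref{NLS}. I would make this rigorous by integrating the $N$-equation by parts against $e^{-i\sigma^2/4}$ (using $e^{-i\sigma^2/4}=-\tfrac{2}{i\sigma}\partial_\sigma e^{-i\sigma^2/4}$), bootstrapping $|T|=1$ to show convergence of $T$ to $A^\pm_a$ with $O(1/s)$ oscillatory error, and then isolating the resonant non-oscillatory term that forces the $a^2\log|s|$ phase; this identifies $B^\pm_a$ with $|\Re B^\pm_a|=|\Im B^\pm_a|=1$ and $A^\pm_a\perp B^\pm_a$, giving (ii) and (iv).

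The algebraic relations (iii) and \eqref{Bs} of (iv) follow from the discrete symmetry of the system. Because the torsion enters through the odd factor $e^{\pm is^2/4}$, the map $s\mapsto -s$ composed with complex conjugation and a suitable reflection of $\mathbb{R}^3$ sends a solution with canonical data at $s=0$ to another such solution, which forces $A^+_{a,1}=A^-_{a,1}$, $A^+_{a,2}=-A^-_{a,2}$, $A^+_{a,3}=-A^-_{a,3}$ and the companion signs for $B^\pm_a$. For the explicit corner angle in (v) I would use that the scalar equation above is integrable in terms of confluent-hypergeometric functions, whose connection coefficients between the behaviours at $+\infty$ and $-\infty$, matched through the regular point $s=0$, are expressible through $\Gamma$-functions; evaluating the surviving component of $T$ yields $A^\pm_{a,1}=e^{-\pi a^2/2}$ and hence $\sin\frac{(\widehat{A^+,-A^-})}{2}=e^{-\pi a^2/2}$. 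Finally, estimate (i) is immediate from (ii): since $G_a(s)-A^\pm_a s=A^\pm_a\frac{2a^2}{s}-\frac{4a}{s^2}n_a+\mathcal{O}(s^{-3})$ is bounded by $Ca$ uniformly in $s$ (including near $s=0$, where $G_a$ is smooth with $G_a(0)=2a\,b_a(t,0)$), the self-similar rescaling gives $|\chi_a(t,x)-A^\pm x|=\sqrt{t}\,|G_a(s)-A^\pm_a s|\le a\sqrt{t}$.
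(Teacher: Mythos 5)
This theorem is not proved in the paper at all: it is imported from \cite{GRV}, and the paper only records the statement (the explicit formulas for $A^\pm_a$, $B^\pm_a$ are referred to formulas (47)--(69) of that article). So the comparison is really with the proof in \cite{GRV}, whose strategy your sketch essentially reconstructs: reduction of the self-similar ansatz to the frame ODE with curvature $a$ and torsion $s/2$ (equivalently your parallel-frame system $T'=a\,\Re(e^{is^2/4}N)$, $N'=-a\,e^{-is^2/4}T$), oscillatory integration by parts in which the resonant non-oscillatory term produces the phase $e^{ia^2\log|s|}$, a discrete symmetry giving the sign relations in (iii)--(iv), and special-function connection formulas for the explicit constants. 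Your mechanism for the logarithmic phase is correct, and the conserved quantities $N\cdot N=0$, $N\cdot\overline N=2$, $T\cdot N=0$ do pass to the limit and yield $|\Re B^\pm_a|=|\Im B^\pm_a|=1$ and $A^\pm_a\perp B^\pm_a$. One repair: the involution is $T(s)\mapsto E\,T(-s)$, $N(s)\mapsto -E\,N(-s)$ with $E=\mathrm{diag}(1,-1,-1)$; complex conjugation is \emph{not} part of it, and in fact conjugation does not preserve the system, since the phase $e^{-is^2/4}$ is even in $s$, so conjugating produces the wrong sign in the exponent.

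The genuine gap is in (v), which is the quantitative heart of the theorem. Your route to $A^\pm_{a,1}=e^{-\pi a^2/2}$ is that ``the scalar equation above is integrable in terms of confluent-hypergeometric functions''; but the scalar equation your reduction actually yields for a component of $T$ is \emph{third} order, $T_j'''-\frac1s T_j''+\bigl(a^2+\frac{s^2}4\bigr)T_j'-\frac{a^2}s T_j=0$, the symmetric square of a Weber-type equation, not itself a parabolic-cylinder equation, so the Gamma-function connection coefficients you invoke are not available at this level without further structure. The workable path (and what underlies the explicit formulas in \cite{GRV}) is to lift the frame system to $SU(2)$: a spinor component solves the genuinely second-order equation $v''-\frac{is}2 v'+\frac{a^2}4 v=0$, whose connection coefficients between $s=-\infty$ and $s=+\infty$ are Gamma-function ratios, and since the frame sits in the adjoint (quadratic) representation of the spinor, the spinor coefficient of modulus $e^{-\pi a^2/4}$ squares to $A^\pm_{a,1}=e^{-\pi a^2/2}$. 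Without this lift, or an equivalent explicit integration, (v) and the precise value in (iii) remain assertions. A second, smaller defect is (i): deducing it from (ii) can only give $C\,a\sqrt t$ with an unspecified constant, and no such argument can reach the stated constant, because $G_a(0)=2a\,b_a$ forces $|\chi_a(t,0)|=2a\sqrt t$; the clean proof is to integrate $\chi_t=c\,b$ with $c=a/\sqrt\tau$ from $0$ to $t$, giving exactly $2a\sqrt t$ (the constant $a$ in the quoted statement appears to be a transcription slip from \cite{GRV}, and your claimed bound $|G_a(s)-A^\pm_a s|\le a$ is actually false at $s=0$).
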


Keeping in mind that the binormal flow is invariant under rotations, Theorem \ref{sssols} is a reformulation of part of this theorem.

As mentioned in the Introduction,  there is a unique form to continue the solution $\chi$ in Theorem \ref{sssols}  for negative times $t<0$ in a self-similar way. This is done taking   $\chi_a(t,x)=\chi_a^*(-t,-x)$, where $\chi_a^*$  is a solution of the binormal flow for positive times with initial data $\chi_a(0,-x)$. Theorem \ref{sssols} ensures us that $\chi_a^*$ is unique. As a consequence, the unique way to extend $\chi_a$ for negative times is to perform  a rotation of $\chi_a$ around the axis given by  $A_a^+-A_a^-$ and with angle $\pi$. This rotation, that we shall call $\rho_a$, can be also  seen as a composition of a reflection with respect to the plane generated by $A_a^+$ and $A_a^-$, and a change of the sense of parametrization. Moreover, the trajectory of the origin for negative times is given by
\begin{equation}
\label{fundamental2}\chi_a(t,0)=2a\sqrt{|t|}\,\rho_a(0,0,1)\qquad t<0.
\end{equation}
That is to say, the trajectory $\chi_a(t,0)$ is given by two lines that join together at $t=0$ with an angle that is determined by $A^+_{a,2}$, see Theorem 1 in \cite{GRV}.

Hereafter and for simplicity we shall drop the subindex $a$ that we have used so far to parametrize the family of self-similar solutions.

We denote $\Pi$ the plane generated by $A^+$ and $A^-$ and by $\Pi^o$ the orthogonal plane generated by $A^+-A^-$ and $A^+\land A^-$. We also introduce $\Pi^\pm$ the plane generated by $\Re B^\pm$ and $\Im B^\pm$. Since $B^\pm\perp A^\pm$ we have that $\Pi^\pm$ is the orthogonal plane to $A^\pm$.
 We shall need the following proposition in the proof of Theorem \ref{ivp} and Theorem \ref{cont}.
 
 \begin{prop}\label{contssim}
 The self similar solution $\chi^*$ of the binormal flow with initial data 
 $$\chi^*(0,x)=\chi(0,-x)=\left\{\begin{array}{cc}-A^-x,\,x\geq 0,\\-A^+x,\, x\leq 0.\end{array}\right.$$
 has the following properties
 $$A^{\pm*}=\rho(A^\pm)=-A^\mp=R^\mp(-A^\mp),\quad B^{\pm*}=\rho(B^\pm)=R^\mp\overline{B^\mp},$$
 where $R^\mp$ is a rotation of angle $2\theta$ in the plane $\Pi^\mp$ and $\theta$ is the angle between $\Re B^\mp$ and the plane $\Pi$. 
 \end{prop}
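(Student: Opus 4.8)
The plan is to identify the self-similar solution $\chi^*$ in two independent ways and to match the resulting asymptotic vectors; the conjugation in the statement will emerge from the second description.

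First I would exploit uniqueness. Since the rotation $\rho$ fixes the origin and the binormal flow is rotation invariant, $\rho\circ\chi$ is again a self-similar solution. Using the explicit formula for the rotation of angle $\pi$ about the axis $A^+-A^-$, namely $v\mapsto 2\frac{v\cdot(A^+-A^-)}{|A^+-A^-|^2}(A^+-A^-)-v$, one checks directly that $\rho(A^\pm)=-A^\mp$; hence the initial datum of $\rho\circ\chi$ is exactly $\chi^*(0,x)=\chi(0,-x)$. Theorem~\ref{sssols} then forces $\chi^*=\rho\circ\chi$. As $\rho$ is orientation preserving, the full Frenet frame transforms by $\rho$, so $A^{\pm*}=\rho(A^\pm)=-A^\mp$ and $B^{\pm*}=\rho(B^\pm)$. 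Since $R^\mp$ is a rotation inside $\Pi^\mp=(A^\mp)^\perp$ it fixes $A^\mp$, which is why one may equivalently write $-A^\mp=R^\mp(-A^\mp)$, giving the first line of the statement.

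Second, I would use the description of $\rho$ as a reflection composed with a reversal of orientation. Let $S_\Pi$ be the orthogonal reflection across $\Pi$ and set $\eta(t,x):=S_\Pi(\chi(t,-x))$. A direct computation shows $\eta$ solves \eqref{binormal}: the reflection turns the flow into its time reverse because $S_\Pi(u)\land S_\Pi(v)=\det(S_\Pi)\,S_\Pi(u\land v)=-S_\Pi(u\land v)$, and the reparametrization $x\mapsto-x$ turns it back. Its initial datum again equals $\chi^*(0,\cdot)$, so $\eta=\chi^*$ by Theorem~\ref{sssols}. Differentiating and using $\det S_\Pi=-1$ in the cross product defining the binormal, the Frenet frame transforms as
\begin{equation*}
T^*(x)=-S_\Pi(T(-x)),\qquad n^*(x)=S_\Pi(n(-x)),\qquad b^*(x)=S_\Pi(b(-x)).
\end{equation*}
Hence $(n^*-ib^*)(x)=S_\Pi\big((n-ib)(-x)\big)$, and since the oscillatory phase in Theorem~\ref{thGRV}(ii) is even in $x$, reading off the limits as $x\to\pm\infty$ (so $-x\to\mp\infty$) yields $B^{\pm*}=S_\Pi(B^\mp)$.

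It remains to prove the planar identity $S_\Pi(B^\mp)=R^\mp\overline{B^\mp}$, after which the two previous steps give the chain $B^{\pm*}=\rho(B^\pm)=R^\mp\overline{B^\mp}$. Here I would work entirely inside $\Pi^\mp=(A^\mp)^\perp$, which by Theorem~\ref{thGRV}(iv) together with $A^\mp\perp B^\mp$ carries the orthonormal pair $(\Re B^\mp,\Im B^\mp)$. The reflection $S_\Pi$ preserves $\Pi^\mp$ and restricts to it as the reflection across the line $\ell=\Pi\cap\Pi^\mp$; writing $\theta$ for the angle between $\Re B^\mp$ and $\ell$ (equivalently, between $\Re B^\mp$ and $\Pi$), this reflection sends $\Re B^\mp$ to the vector at angle $-\theta$, namely $R^\mp(\Re B^\mp)$ with $R^\mp$ the in-plane rotation of angle $2\theta$, while it sends the orthogonal vector $\Im B^\mp$ to $-R^\mp(\Im B^\mp)$. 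Combining, $S_\Pi(B^\mp)=R^\mp(\Re B^\mp)-iR^\mp(\Im B^\mp)=R^\mp\overline{B^\mp}$. The main obstacle is precisely the sign bookkeeping in this last step: one must check that reflection across $\ell$ produces $+R^\mp$ on $\Re B^\mp$ but $-R^\mp$ on $\Im B^\mp$ independently of the orientation of the frame $(\Re B^\mp,\Im B^\mp)$, which is exactly what complex conjugation encodes. A short verification on the two possible orientations of the orthonormal frame settles it; the remainder is the standard dictionary between reflections, $\pi$-rotations, and the uniqueness of Theorem~\ref{sssols}.
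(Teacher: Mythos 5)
Your proof is correct, and while its beginning and end coincide with the paper's argument, the middle step takes a genuinely different route. The paper, like you, first identifies $\chi^*=\rho\circ\chi$ by uniqueness and concludes $A^{\pm*}=\rho(A^\pm)$, $B^{\pm*}=\rho(B^\pm)$; and its final computation (the reflection across $\Pi$ restricted to $\Pi^\mp$ acts as $R^\mp$ on $\Re B^\mp$ and as $-R^\mp$ on $\Im B^\mp$, hence gives $R^\mp\overline{B^\mp}$) is exactly your third paragraph. The genuine difference is how one reaches $B^{\pm*}=S_\Pi(B^\mp)$, where $S_\Pi$ denotes the reflection across $\Pi$: the paper quotes the coordinate symmetry \eqref{Bs} of Theorem \ref{thGRV}, which says that $B^+$ is the reflection of $B^-$ across $\Pi^o$, and factors the half-turn as $\rho=S_\Pi\circ S_{\Pi^o}$; you instead build a second representation $\eta(t,x)=S_\Pi(\chi(t,-x))$ of the same self-similar solution, using that a reflection composed with the reversal $x\mapsto-x$ is a symmetry of \eqref{binormal} (the two sign changes in the cross product cancel), invoke uniqueness a second time, and read the limits off the evenness of the phases. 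Your route is more self-contained: it never uses \eqref{Bs} — in fact, comparing your two representations of $\chi^*$ re-derives \eqref{Bs} — and it exposes the geometric mechanism behind the conjugation (orientation reversal), whereas the paper's route is shorter given that Theorem \ref{thGRV} is already quoted. Two minor points you should make explicit: $\eta$ is itself self-similar (its profile is $S_\Pi G(-\cdot)$), which is needed to apply the uniqueness of Theorem \ref{sssols} within that class; and the orthogonality $\Re B^\mp\perp\Im B^\mp$ is not literally contained in Theorem \ref{thGRV}(iv), which only gives the norms — as in the paper, it follows from the definition of $B^\mp$ as a limit of $(n+ib)$ multiplied by unimodular phases.
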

 
 \begin{figure}
    \includegraphics[height=8cm]{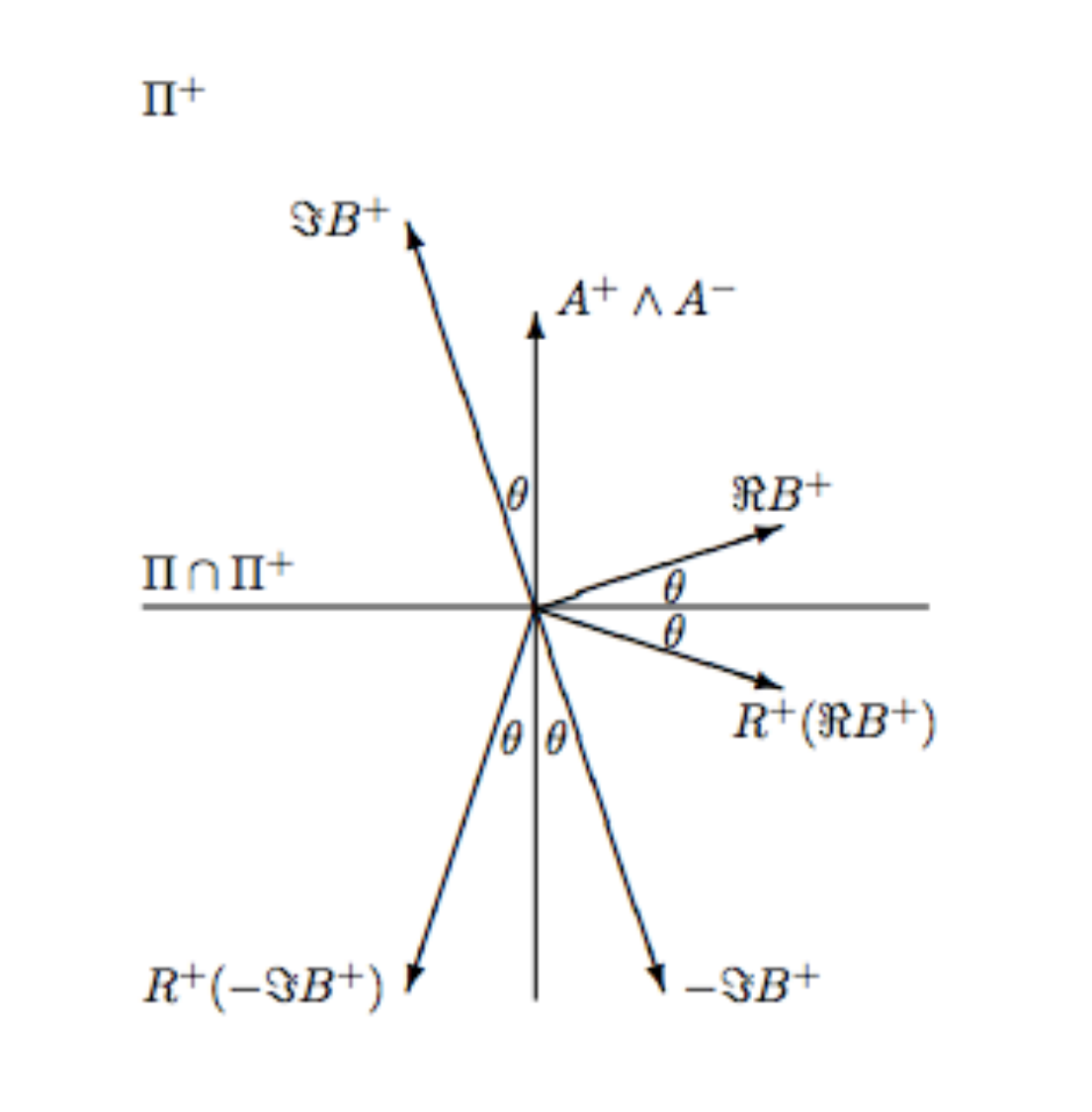} ,
  \caption{The plane $\Pi^+$.}
    \label{Bplane}
\end{figure}

\begin{proof}
We have already seen that $\chi^*(t,x)=\rho\chi(t,x)$ so that $(T^*,n^*,b^*)(t,x)=\rho\,(T,n,b)(t,x)$ and $(c^*,\tau^*)(t,x)=(c,\tau)(t,x)$. Then, it is easy to see that $T^*(t,x)$ goes to $-A^-$ as $x$ goes to $+\infty$, and to $-A^+$ as $x$ goes to $-\infty$. Since $\rho(A^\pm)=-A^\mp$, we obtain that $A^\pm\,^*=-A^\mp$. We are left with seeing what is $B^{+*}$ in terms of $B^\pm$. 
 
 By definition, since the torsion of $\chi(t,s)$ is $\frac {s}{2t}$,
$$B^{\pm}=\underset{x\rightarrow \pm\infty}{\lim}(n+ib)(t,x)e^{i\frac{x^2}{4t}}e^{-ia^2\log\sqrt{|t|}\,+ ia^2 \log|x|}=\underset{x\rightarrow \pm\infty}{\lim}N(t,x)\,e^{-ia^2\log\sqrt{|t|}\,+ ia^2  \log|x|}.$$
Since $(n^*,b^*)(t,x)=\rho\,(n,b)(t,x)$ and $\tau^*(t,x)=\tau(t,x)$,
$$N^*(t,x)=(n^*+ib^*)(t,x)e^{i\int_0^x\tau^*(t,s)ds}=(\rho n+i\rho b)(t,x)e^{i\frac{x^2}{4t}}=\rho\,N(t,x).$$
In particular
$$B^{\pm*}=\rho B^\pm.$$
From \eqref{Bs} we conclude that  $B^+$ is a reflection of $B^-$ with respect to the  plane $x_1=0$, which is precisely the plane $\Pi^o$. The rotation $\rho$ can be also seen as a composition of a reflection with respect to the plane $\Pi$ with a reflection with respect to the plane $\Pi^o$. Therefore  $\rho B^-$ is a reflection of $B^+$ with respect to the plane $\Pi$. In view of the definition of $B^+$ we obtain $\Re B^+\perp \Im B^+$. It follows then that the reflection of $\Re B^+$ with respect to the plane $\Pi$ is $R^+ \Re B^+$, and the one of $\Im B^+$ is $R^+(-\Im B^+)$. As a conclusion $B^{-*}=R^+\overline{B^+}$ (see Figure \ref{Bplane}). Moreover, since $A^+$ is orthogonal to $\Pi^+$, it follows that $A^{-*}=-A^+=R^+(-A^+)$. Again, since $B^+$ is a reflection of $B^-$ with respect to $\Pi_0$, the angle between $B^-$ and the plane generated by $A^+$ and $A^-$ is also $\theta$ and similarly we get $B^{+*}=R^- \overline{B^-}$ and $A^{+*}=-A^-=R^-(-A^-)$. 
 \end{proof}

\section{Proof of Theorem \ref{ivp}}\label{sectivp}
As announced in the Introduction, we construct a function $f_+$ from the curve $\chi_0$ in Theorem \ref{ivp} as follows.  We recall the notation $B^\pm$ for the complex vector appearing in the asymptotics of the normals vectors of the unique self similar solution of the binormal flow with the same corner as $\chi_0$ at time $0$ (see Theorem \ref{sssols}). We denote $T_0=\chi_0'$. We define for $x>0$  a complex-valued function $g$ and a $\mathbb C^3$-valued function $\tilde N_0$ orthonormal to $T_0$ by solving  the system 
\begin{equation}\label{systinitial}
\left\{\begin{array}{c}T_{0x}(x)=\Re (g(x)\tilde N_0(x)),\\ \tilde N_{0x}(x)=-\overline{g}(x)T_0(x),
\end{array}\right.\end{equation}
with initial data $(A^+,B^+)$. We define $g(x)$ and $\tilde N_0$ similarly for $x<0$ imposing $(A^-,B^-)$ as initial data in \eqref{systinitial}. In particular we have the following link with the curvature of $\chi_0$: $|g(x)|=c(x)$. Therefore $(1+|x|^4)g\in L^2$ and $|x|^{2\gamma}g(x)\in L^\infty_{(x^2\leq 1)}$ are small with respect to $a$. Next we define 
$$f_+=\mathcal F^{-1}\left(g(2\cdot)e^{ia^2\log |2\cdot|}\right).$$ 
In particular $f_+$ and its first $4$ derivatives are small in $X^\gamma$ with respect to $a$. We let $u(t,x)$ be the solution of \eqref{NLS} with asymptotic state $f_+$, given by the construction of wave operators in \cite{BV2}. It was also shown in \cite{BV2}  that $u(1,x)$ and its first $4$ derivatives belong to $X^{\gamma^+}$. The following bounds hold for all $ 0\leq k\leq 4$ and $t_1\leq t_2<\infty$
\begin{equation}\label{unifbounds}
\exists C(a),\tilde C(a),\, \|\partial_x^ku(1/t)\|_{L^\infty[t_1,t_2],H^1}\leq C(a)\sum_{j=0}^4\|\partial_x^ju(1)\|_{X^{\gamma^+}}\leq \tilde C(a)\sum_{j=0}^4\|\partial_x^jf_+\|_{X^{\gamma}}.
\end{equation}
Next we define $\psi$ by the pseudo-conformal transformation \eqref{calT},
 $$\psi(t,x)=\mathcal T (a+u)(t,x)=\frac{e^{i\frac{x^2}{4t}}}{\sqrt{t}}\overline{a+u}\left(\frac 1t,\frac xt\right).$$
Finally we construct $\chi(t,x)$ to be the corresponding solution of the binormal flow for $t\geq0$, i.e. with curvature $c(t,x)=|\psi(t,x)|$ and torsion $\tau(t,x)=\partial_x \arg \psi(t,x)$, having as initial data at time $t_0>0$ the location $\chi(t_0,0)=(0,0,0)$ and as Frenet frame $(T,n,b)(t_0,0)$ the canonical orthonormal basis of $\mathbb R^3$\footnote{We denote $(e_1, e_2, e_3)$ the orthonormal basis of $\mathbb R^3$ and $(T,N)(t_0,0)=(e_1,e_2+ie_3)$. First, we construct $(T,N)(t,x)$ by imposing the evolutions laws 
$$T_x=\Re(\overline \psi N), N_x=-\psi T, T_t=\Im\overline{\psi_x}N,\quad N_t=-i\psi_x T+i(|\psi|^2-A(t))\, N.$$ 
Then $\chi(t,x)$ defined as $
\chi(t_0,x)=(0,0,0)+\int_t^{t_0}(T\wedge T_{xx})(\tau,0)d\tau+\int_0^xT(t,s)ds,
$
is a solution of the binormal flow. For details one can see for instance the Appendix of \cite{BV1} where the same type of costruction is done using the Frenet frame instead of the $(T,N)$ frame, the link between the two constructions being that the two frames are related by the normal rotation $N(t,x)=(n+ib)(t,x)e^{i\int^x_0 \tau(t,s)ds}$.} The curve $\chi(t)$ has curvature close to $\frac a{\sqrt{t}}$, and since it  satisfies the binormal flow it follows that it has a trace at time $t=0$. In particular $\chi(0,0)$ is a point in $\mathbb R^3$. We translate $\chi$ in space such that  $\chi(0,0)=\chi_0(0)$. Let $(T,n,b)(t,x)$ be its (very oscillating) Frenet frame for $t>0$ and consider the following complex normal vectors
$$N(t,x)=(n+ib)(t,x)e^{i\int^x_0 \tau(t,s)ds}\,\,,\,\tilde N=Ne^{i\Phi} \mbox{ with } \Phi(t,x)=-a^2\log\sqrt{t}+a^2 \log|x|.$$
We shall prove in the next two subsections that for $x\neq 0$ the tangent vector $T(t,x)$ has a limit as $t$ goes to $0$, and eventually in \S \ref{sectT0bis} that modulo a rotation this limit is precisely $T_0(x)$, so modulo a rotation $\chi(0,x)=\chi_0(x)$. Then we shall show the uniqueness of $\chi$. Finally, in \S \ref{sectext} we shall extend $\chi(t,x)$ for negative times and end the proof of Theorem \ref{ivp}.

\subsection{Asymptotic behaviour in time and space for the tangent vector}\label{sectcond}
We start first with an asymptotic analysis of tangent and normal vectors, keeping track of both time and space variables. 

\begin{prop}\label{propas}There exist $C>0$, $T^{\pm\infty}\in\mathbb S^2$ and $N^{\pm\infty}\in\mathbb C^3$ such that for all times $0<t\leq 1$, and $x\neq 0$, the following estimates hold, with the choice between $\pm$ given by the sign of $x$:
$$|T(t,x)-T^{\pm\infty}|\leq C\|\partial_xu(1)\|_{X^{\gamma^+}}\,\frac 1{\sqrt{|x|}}+C_1\,\frac{\sqrt t}{|x|},$$
$$\left|\tilde N(t,x)-N^{\pm\infty}\right|\leq C\|\partial_xu(1)\|_{X^{\gamma^+}}\,\frac 1{\sqrt{|x|}}+C_2\left(\frac{\sqrt t}{|x|}+\frac{t}{x^2}+\sqrt t\right).$$

Moreover,
\begin{equation}\label{Tasy}
T(t,x)-T^{\pm\infty}+\Im\int_x^{\pm\infty} h(t,s)\,\tilde N (t,s)ds=c_0(t,x),
\end{equation}
\begin{equation}\label{Nasy}
\tilde N(t,x)-N^{\pm\infty}-i\int_x^{\pm\infty} \overline{h(t,s)}\,T(t,s)\,ds=d_0(t,x),
\end{equation}
with
\begin{equation}\label{c0d0}
|c_0(t,x)|\leq C_1\frac{\sqrt t}{|x|},\quad |d_0(t,x)|\leq C_2\left(\frac{\sqrt t}{|x|}+\frac{t}{x^2}+\sqrt t\right),
\end{equation}
and the notations
$$h(t,s)= e^{-i\frac{s^2}{4t}}\frac{2}{s\sqrt t}\,(u_s)\left(\frac 1t,\frac st\right)e^{-i\Phi(t,s)},$$
$$C_0=\|u(1)\|_{X^{\gamma^+}}+\|\partial_xu(1)\|_{X^{\gamma^+}},\quad C_1=C(a+C_0), \quad C_2=C(a+a^4+(1+a^2)C_0+C_0^2).$$

\end{prop}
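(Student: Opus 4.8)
The plan is to reduce Proposition \ref{propas} to a coupled first–order ODE system in the space variable for the pair $(T,\tilde N)$ and to analyse it by a non-stationary phase / integration-by-parts argument, treating the self-similar amplitude $a$ and the perturbation $u$ separately. Starting from the evolution laws $T_x=\Re(\overline\psi N)$, $N_x=-\psi T$ and the definition $\tilde N=Ne^{i\Phi}$, one gets
\begin{equation*}
T_x=\Re\!\left(\overline\psi\,e^{-i\Phi}\tilde N\right),\qquad \tilde N_x=-\psi\,e^{i\Phi}\,T+i\frac{a^2}{x}\tilde N,
\end{equation*}
and substituting $\psi=\mathcal T(a+u)$ from \eqref{calT} the coefficient $\overline\psi\,e^{-i\Phi}$ becomes $\frac1{\sqrt t}(a+u)(\tfrac1t,\tfrac st)\,e^{-i(s^2/4t+a^2\log|s|)}e^{ia^2\log\sqrt t}$. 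Thus every right-hand side carries the non-stationary oscillation $e^{\pm is^2/4t}$, whose phase has derivative $\pm s/2t\neq0$ for $s\neq0$; this is the only mechanism producing decay once the weighted hypotheses of \cite{BV3} are dropped.

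The second step is to integrate each equation from $x$ to $\pm\infty$ and integrate by parts using $e^{\mp is^2/4t}=\frac{\pm 2it}{s}\,\partial_s e^{\mp is^2/4t}$. Each integration by parts gains a factor of order $t/s$. When $\partial_s$ falls on the amplitude $u(\tfrac1t,\tfrac st)$ it produces $\frac1t u_s(\tfrac1t,\tfrac st)$, and the combination $\frac{2it}{s}\cdot\frac1t\cdot\frac1{\sqrt t}u_s=\frac{2i}{s\sqrt t}u_s$ is exactly the kernel $h$ of \eqref{Tasy}--\eqref{Nasy}; this is the single contribution that is not better than the stated error and it furnishes the principal terms $\Im\int_x^{\pm\infty}h\,\tilde N$ and $i\int_x^{\pm\infty}\overline h\,T$. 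All remaining pieces are collected into $c_0$ and $d_0$: the boundary term at $s=x$ of the $a$–part is of size $a\sqrt t/|x|$; the derivative falling on the logarithmic phase $e^{ia^2\log|s|}$, together with the rotational term $i\frac{a^2}{x}\tilde N$ with which it is designed to combine so as to avoid a logarithmic divergence, produces the $t/x^2$ and the non-decaying $\sqrt t$ contributions to $d_0$; and the derivative falling on $T_s,\tilde N_s$ gives quadratically small terms. The limits $T^{\pm\infty}$ and $N^{\pm\infty}$ exist because the tail integrals $\int_x^{\pm\infty}h\,\tilde N$ and $\int_x^{\pm\infty}\overline h\,T$ converge as $x\to\pm\infty$, using that $|T|=1$ and $|\tilde N|=|N|=\sqrt2$ are bounded.

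The quantitative bounds then follow from the scaling $\sigma=s/t$ and Cauchy--Schwarz. For the principal term,
\begin{equation*}
\Big|\int_x^{\pm\infty}h\,\tilde N\,ds\Big|\le \frac{C}{\sqrt t}\int_{|x|/t}^{\infty}\frac{|u_\sigma(\tfrac1t,\sigma)|}{|\sigma|}\,d\sigma\le \frac{C}{\sqrt t}\,\|\partial_x u(\tfrac1t)\|_{L^2}\Big(\int_{|x|/t}^\infty\frac{d\sigma}{\sigma^2}\Big)^{1/2}=\frac{C\,\|\partial_x u(\tfrac1t)\|_{L^2}}{\sqrt{|x|}},
\end{equation*}
and \eqref{unifbounds} bounds $\|\partial_x u(\tfrac1t)\|_{L^2}$ by $\|\partial_x u(1)\|_{X^{\gamma^+}}$ uniformly in $t\in(0,1]$, giving the first displayed estimate. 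The boundary and log-phase terms produce the factors $\sqrt t/|x|$, $t/x^2$ and $\sqrt t$ in \eqref{c0d0}, the constants $C_1,C_2$ accumulating the powers of $a$ (up to $a^4$) and of $C_0$ exactly as stated, since the $a$–amplitude enters the boundary terms linearly while the iterated log-phase corrections bring in further factors of $a^2$.

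The main obstacle is precisely the one anticipated in the Introduction: in \cite{BV3} the convergence and the decay rates were obtained with the help of the weight $x^2u\in L^2$, which is not available here because the solution is produced backwards from the asymptotic state $f_+$. The delicate point is therefore to extract the same $1/\sqrt{|x|}$ spatial decay and, above all, uniform-in-$t$ control on $(0,1]$ using only $u,\,u_x\in H^1$ together with the low-frequency information encoded in $X^{\gamma^+}$. This forces one to rely solely on the oscillation $e^{\pm is^2/4t}$ for integrability, and the scaling $\sigma=s/t$ must be arranged so that the many $1/\sqrt t$ factors cancel and leave clean rates; maintaining this balance while simultaneously reproducing, uniformly in $t$, the self-similar asymptotics of Theorem \ref{thGRV} inside the error terms $c_0,d_0$ is the heart of the argument.
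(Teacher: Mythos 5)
Your derivation of the integral equations \eqref{Tasy}--\eqref{Nasy} and of the quantitative bounds (the spatial ODE system for $(T,\tilde N)$, integration by parts in the quadratic phase producing the kernel $h$, then Cauchy--Schwarz after the scaling $\sigma=s/t$) is in substance the derivation of formulas (31)--(32) of \cite{BV3}, which the paper does not repeat but simply cites; that part of your proposal is sound. The genuine gap is elsewhere: you obtain $T^{\pm\infty}$ and $N^{\pm\infty}$ as limits of tail integrals at \emph{fixed} $t$, which only yields vectors $T^{\pm\infty}(t)$, $N^{\pm\infty}(t)$ a priori depending on $t$. The Proposition asserts a single pair of vectors for which the estimates hold uniformly for all $0<t\leq 1$, i.e.\ one must prove that the limit at spatial infinity of $\tilde N(t,\cdot)$ is \emph{independent of time}. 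This is exactly the one point where \cite{BV3} used the weighted hypothesis $x^2u\in L^2$ (Lemma 3.4 there), and supplying a weight-free substitute for it is the only new content of the paper's proof of this Proposition. Without time-independence, the constants $T^\infty$, $N^\infty$ entering the later iteration (Lemma \ref{iteration}, Proposition \ref{proptanglim}) would vary with $t$ and the limit $t\to 0$ defining the trace could not be taken.

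Moreover, the tools you propose cannot close this gap. Time-independence is obtained by integrating the time-evolution law $N_t=-i\psi_x T+i(|\psi|^2-A(t))N$ from $t$ to $1$, and the term $(|\psi|^2-A(t))N$ forces one to show that $\int_t^1 \left|u\left(\frac{1}{t'},\frac{x}{t'}\right)\right|\frac{dt'}{t'}\to 0$ as $x\to+\infty$, which is \eqref{limit}. That integrand carries an absolute value, so no non-stationary-phase or oscillation argument of the type your whole proposal rests on can produce the decay; and the uniform $H^1$ control \eqref{unifbounds} does not by itself give decay of $u(1/t',x/t')$ uniformly in $t'$, because the evaluation point $x/t'$ moves with $t'$ (differentiating in $t'$ produces a factor of $x$, which is precisely why weights were needed in \cite{BV3}). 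The paper's actual proof fills this hole with a compactness argument: assuming $\sup_{t'\in[t,1]}|u(1/t',x_n/t')|>\epsilon$ along some sequence $x_n\to\infty$, one picks near-maximizing times $t_n$, extracts a convergent subsequence $t_n\to t_0$, and uses $u\in\mathcal C([1,\frac1t],H^1)$ together with $H^1\subset L^\infty$ to transfer the lower bound to the fixed profile $u(1/t_0)$ on $(x_n,\infty)$, contradicting the decay at infinity of an $H^1$ function. Your proposal needs this (or an equivalent) argument; as written it does not prove the Proposition.
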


\begin{proof}
This result was proved in \cite{BV3} (see formulas (31) and (32)), provided that $u(1)$ belongs to some weighted space, which is not the case in the present paper. These weighted spaces were used in the proofs formulas (31) and (32) in \cite{BV3} only  for showing that the limit at infinity of the normal vector $\tilde N(t,x)$ is independent of time (Lemma 3.4 in \cite{BV3}), and more precisely in showing that
\begin{equation}\label{limit}\int_t^1\left|u\left(\frac 1{t'},\frac x{t'}\right)\right|\frac{dt'}{t'}\overset{x\rightarrow +\infty}{\longrightarrow}0.\end{equation}

For getting \eqref{limit} without using weights conditions, and hence obtaining the Proposition, we proceed in here as follows. We have
$$\int_t^1\left|u\left(\frac 1{t'},\frac x{t'}\right)\right|\frac{dt'}{t'}\leq \log t\sup_{t'\in[t,1]}\left|u\left(\frac 1{t'},\frac x{t'}\right)\right|,$$
so it is enough to prove that \footnote{It is easy to show that $$\sup_{t'\in[t,1]}\left|u\left(\frac 1{t'},x\right)\right|$$
tends to zero as $x$ goes to infinity by using the fact that $u\left(\frac 1{t'},x\right)\in W^{1,p}([t,1]\times[0,\infty))\subset L^\infty(\mathbb R^2)$ for $p>2$, then use the approximation of $W^{1,p}$ by $\mathcal C^\infty(\mathbb R^2)$ functions. The issue is that $u\left(\frac 1{t'},\frac x{t'}\right)$ is not in $W^{1,p}([t,1]\times[0,\infty)$ because when we compute its $\partial_{t'}$ partial derivative we get a factor $x$ and therefore weighted spaces are needed this way.}
$$\sup_{t'\in[t,1]}\left|u\left(\frac 1{t'},\frac x{t'}\right)\right|\overset{x\rightarrow +\infty}{\longrightarrow}0$$
Suppose it does not. Then 
$$\exists \epsilon>0,\, \exists x_n\rightarrow +\infty,\quad \sup_{t'\in[t,1]}\left|u\left(\frac 1{t'},\frac {x_n}{t'}\right)\right|>\epsilon.$$
In particular 
$$\exists t_n\in [t,1], \quad \left|u\left(\frac 1{t_n},\frac {x_n}{t_n}\right)\right|>\frac{99}{100}\epsilon.$$
Since $u\left(\frac 1{t_n}\right)$ is continuous, we obtain
$$\left\|u\left(\frac 1{t_n}\right)\right\|_{L^\infty\left(\frac{x_n}{t_n},\infty\right)}>\frac\epsilon 2.$$
Moreover, $t_n<1$ so
$$\left\|u\left(\frac 1{t_n}\right)\right\|_{L^\infty\left(x_n,\infty\right)}>\frac\epsilon 2.$$
Now $t_n\in[t,1]$ so there is a subsequence (that we recall $t_n$) and a number $t_0\in[t,1]$ such that $t_n\rightarrow t_0.$ We use $u\in\mathcal C\left(\left[1,\frac 1t\right],H^1\right)$ to get
$$\left\|u\left(\frac 1{t_n}\right)-u\left(\frac 1{t_0}\right)\right\|_{H^1}\overset{n\rightarrow +\infty}{\longrightarrow} 0,$$
so
$$\left\|u\left(\frac 1{t_n}\right)-u\left(\frac 1{t_0}\right)\right\|_{L^\infty}\overset{n\rightarrow +\infty}{\longrightarrow} 0.$$
There exists $N_\epsilon$ such that for all $n\geq N_\epsilon$ 
$$\left\|u\left(\frac 1{t_n}\right)-u\left(\frac 1{t_0}\right)\right\|_{L^\infty}\leq \frac \epsilon 4.$$
Since
$$\left\|u\left(\frac 1{t_n}\right)\right\|_{L^\infty(x_n,\infty)}> \frac \epsilon 2,$$
we obtain for all $n\geq N_\epsilon$
$$\left\|u\left(\frac 1{t_0}\right)\right\|_{L^\infty(x_n,\infty)}> \frac \epsilon 4.$$
Since $x_n$ tends to infinity, this is in contradiction with the fact that $u\left(\frac 1{t_0}\right)$ belongs to $H^1$. 

In conclusion, \eqref{limit} can be proved without weight conditions and the Proposition follows. 
\end{proof}

\subsection{The existence of the tangent vector at $t=0$}\label{sectT0}
In order to obtain the existence of a trace for the tangent vector at $t=0$ we would like to proceed in a way similar to the one in \cite{BV3} but avoiding the assumption that $u(t)$ is in weighted spaces. Hence we will re-express in formula \eqref{Tasy} the vector $\tilde N$ appearing in the integral by using \eqref{Nasy} to obtain an integral equation on $T$. Our plan is then to solve this equation by iteration as done in \cite{BV3}. 

We shall perform the analysis for $x>0$; the case $x<0$ goes the same.

Note that in \cite{BV3} we were able to iterate this process that generates multiple integrals because we proved 
 (Lemma 4.1 in \cite{BV3}) that for $t$ small with respect to $x$
$$\int_x^\infty|h(t,s)|ds\leq C_3 +C_4\,\frac{t^\frac 14}{x},$$
with
$$C_3=CC_0,\quad C_4=C(a)(C_0+\|xu(1)\|_{L^2}),$$
so $C_3$ and $C_4$ are small if $u(1)$ and its derivative are small enough in $X^{\gamma^+}$.
However, the proof of this key estimate relied on the fact that $u(t)$ belong to weighted spaces.  In order to avoid the use of weighted space we shall take advantage of the particular form of $h(t,s)$ that involves a derivative in space of $u$. We start by proving a lemma which will be frequently used in this subsection. \\

We recall that
$$h(t,s)= e^{-i\frac{s^2}{4t}}\frac{2}{s\sqrt t}\,(u_s)\left(\frac 1t,\frac st\right)e^{-i\Phi(t,s)} \mbox{ with } \Phi(t,s)=-a^2\log\sqrt{t}+a^2\log|s|.$$

\begin{lemma}\label{restest}
There exists a constant $C>0$ such that for all $n\in \mathbb N^*$, $0<t\leq 1$ and $0<x$ the following estimate holds
\begin{equation}\label{nintT}
\left|\int_x^\infty h_{i_1}(t,s_1)\int_{s_1}^\infty h_{i_2}(t,s_2) ...\int_{s_{n-1}}^\infty h_{i_n}(t,s_n)\,f(t,s_n)\,ds_n...ds_1\right|
\end{equation}
$$\leq C^n\|u(1/t)\|_{H^1}^n \left(1+\frac t{x^2}\right)^{n-1}\left( \left(1+\frac t{x^2}\right)\|f(t)\|_{L^\infty(x,\infty)}+\frac tx\,\|\partial_sf(t)\|_{L^2(\min\{x,1\},1)}\right),$$
where $h_{i_j}\in\{h,\overline h\}$ for $1\leq j\leq n$.
\end{lemma}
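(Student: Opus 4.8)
The plan rests on the one structural feature of $h$ that the authors emphasise: it carries a \emph{space derivative} of $u$. Writing $v(s)=u\!\left(\frac1t,\frac st\right)$, the chain rule gives the exact identity $(u_s)\!\left(\frac1t,\frac st\right)=t\,\partial_s v(s)$, so that
\begin{equation*}
h(t,s)=e^{-i\frac{s^2}{4t}}\,\frac{2\sqrt t}{s}\,\partial_s v(s)\,e^{-i\Phi(t,s)} .
\end{equation*}
The whole point of this rewriting is that the $u$-dependence now sits inside a total derivative $\partial_s v$, so it can be transferred by an integration by parts onto the smooth weights $1/s$, $e^{-i\Phi}$ and the oscillatory factor. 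Afterwards $u$ enters only through $\|v\|_{L^\infty}\le C\|u(1/t)\|_{H^1}$ and $\|\partial_s v\|_{L^2}\le C t^{-1/2}\|u(1/t)\|_{H^1}$ (equivalently, through $\partial_\sigma u$ after the rescaling $\sigma=s/t$, under which $\partial_s v\,ds=\partial_\sigma u(1/t,\sigma)\,d\sigma$). In particular no weighted norm $\|x\,u\|_{L^2}$ is ever produced — which is exactly the obstruction from \cite{BV3} that must be bypassed.

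I would first establish the single-integral case $n=1$ and then induct, treating the inner $(n-1)$-fold integral as the new integrand $f$. For $n=1$, integrating by parts to free $v$ from its derivative produces four groups of terms: a boundary term at $s=x$; terms where $\partial_s$ falls on the slowly varying weights $1/s$ or $e^{-i\Phi}$, each gaining an extra $1/s$ that integrates against $\int_x^\infty s^{-2}\,ds=1/x$; a term where $\partial_s$ falls on $f$; and the oscillatory term where $\partial_s$ hits the Gaussian phase. The first two groups are absorbed into the $(1+t/x^2)\,\|f\|_{L^\infty(x,\infty)}$ part, using $\sqrt t/x\le\frac12(1+t/x^2)$. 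For the $f$-derivative term I split the integral at $s=1$: on the bounded piece $(\min\{x,1\},1)$ I retain $\partial_s f$ and close by Cauchy--Schwarz, which is the source of the anomalous $\frac tx\,\|\partial_s f\|_{L^2(\min\{x,1\},1)}$ contribution, while on $(1,\infty)$ a further integration by parts pushes the derivative back off $f$ and restores an $\|f\|_{L^\infty}$ bound. Feeding this single-integral estimate into itself gains, at each of the outer $n-1$ levels, one factor $C\|u(1/t)\|_{H^1}(1+t/x^2)$, yielding $C^n\|u(1/t)\|_{H^1}^n(1+t/x^2)^{n-1}$ and leaving the innermost level to carry the $\|f\|_{L^\infty}$ and $\|\partial_s f\|_{L^2}$ dependence; the remaining work is to check that $\|\cdot\|_{L^\infty}$ and $\|\partial_s(\cdot)\|_{L^2}$ of the inner integral are themselves controlled by $\|f\|_{L^\infty}$ and $\|\partial_s f\|_{L^2}$.

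The genuine difficulty — and the reason for the mixed form of the right-hand side — is the term coming from differentiating the Gaussian phase: it appears with prefactor $t^{-1/2}$, i.e. as $\frac1{\sqrt t}\int_x^\infty v\,e^{-is^2/4t}e^{-i\Phi}f\,ds$. Estimating it by absolute values is hopeless (it would require $u(1/t)\in L^1$, or a weighted $L^2$ bound), and a second oscillatory integration by parts is \emph{circular}: applying $\partial_s$ to the phase-primitive regenerates $\partial_s v$ and reproduces exactly the integral $\int_x^\infty h f\,ds$ one started from. The crux is thus to extract a real oscillatory gain from this term without reintroducing $\partial_s v$; the idea is to keep the phase, pass to the self-similar variable $\sigma=s/t$, and balance the weight against $e^{-it\sigma^2/4}$ so that a single Cauchy--Schwarz in $\sigma$ (where $\int_{x/t}^\infty\sigma^{-2}\,d\sigma=t/x$) closes the estimate using only $\|u(1/t)\|_{H^1}$ and the stated norms of $f$. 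Making this balance hold uniformly for $0<t\le1$ and compatibly at every level of the $n$-fold iteration is where the real work lies; everything else is the bookkeeping of constants indicated above.
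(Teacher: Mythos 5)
Your opening move is the right one and is the same as the paper's: exploit the identity $(u_s)\!\left(\tfrac1t,\tfrac st\right)=t\,\partial_s\!\left[u\!\left(\tfrac1t,\tfrac st\right)\right]$ and integrate by parts so that $u$ appears undifferentiated. But your execution leaves a genuine gap, and it is exactly the point you flag yourself as ``where the real work lies'': the term where $\partial_s$ hits the Gaussian phase, which in your setup is $\frac1{\sqrt t}\int_x^\infty u\!\left(\tfrac1t,\tfrac st\right)e^{-i\frac{s^2}{4t}}e^{-i\Phi}f\,ds$ and which you correctly observe cannot be closed with the available norms. That difficulty is an artifact of integrating by parts over the whole half-line $(x,\infty)$. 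The paper never does this: it splits $\int_x^\infty=\int_x^1+\int_1^\infty$ \emph{first} and integrates by parts only on the compact interval $[x,1]$. On $(1,\infty)$ no integration by parts is needed at all, since $1/s\in L^2(1,\infty)$ and Cauchy--Schwarz plus the Jacobian of $s\mapsto s/t$ give
$$\left|\int_1^\infty h(t,s)f(t,s)\,ds\right|\le\frac{2}{\sqrt t}\,\|f(t)\|_{L^\infty}\left(\int_1^\infty\frac{ds}{s^2}\right)^{\frac12}\left(\int_1^\infty\left|u_s\!\left(\tfrac1t,\tfrac st\right)\right|^2ds\right)^{\frac12}\le 2\,\|\partial_xu(1/t)\|_{L^2}\,\|f(t)\|_{L^\infty},$$
the rescaling producing the factor $\sqrt t$ that cancels the $1/\sqrt t$. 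On $[x,1]$ the phase term you call hopeless is harmless for the same elementary reason: it reads $\frac1{\sqrt t}\int_x^1\left|u\!\left(\tfrac1t,\tfrac st\right)\right||f|\,ds$, and since the interval is bounded, Cauchy--Schwarz against the constant function gives the bound $\frac1{\sqrt t}\|f(t)\|_{L^\infty}\left(\int_x^1\left|u\!\left(\tfrac1t,\tfrac st\right)\right|^2ds\right)^{\frac12}\le\|u(1/t)\|_{L^2}\|f(t)\|_{L^\infty}$. No oscillatory gain, no ``balance'' in the self-similar variable, and no $L^1$ or weighted bound is needed anywhere; the crux you pose simply does not arise once the integration by parts is confined to $(x,1)$.

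Your claim that an absolute-value estimate of the phase term ``would require $u(1/t)\in L^1$'' is true only on the unbounded interval; on $(x,1)$ it is false, and this is the whole point. With this correction the rest of your scheme essentially matches the paper: the same rescaled Cauchy--Schwarz (pairing $\frac{2\sqrt t}{s}\left|u\!\left(\tfrac1t,\tfrac st\right)\right|$ with $|\partial_sf|$ in $L^2(x,1)$, gaining another $\sqrt t$) produces the term $\frac tx\|\partial_sf(t)\|_{L^2(\min\{x,1\},1)}$, and the induction is run as you indicate: writing $f_n(t,x)=\int_x^\infty h(t,s)f_{n-1}(t,s)\,ds$, one uses $\partial_sf_{n-1}=-h\,f_{n-2}$ so that $\frac tx\|\partial_sf_{n-1}(t)\|_{L^2(x,1)}\le C\frac t{x^2}\|u(1/t)\|_{H^1}\|f_{n-2}(t)\|_{L^\infty(x,\infty)}$, which is absorbed into the powers of $\left(1+\frac t{x^2}\right)$. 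Note also that the paper disposes of $x\ge1$ at the outset by the crude bound $\|u_s(1/t)\|_{L^2}^n\,x^{-n/2}\,\|f(t)\|_{L^\infty}$, so the delicate argument is only needed for $x<1$.
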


\begin{proof}

It will follow from the proof below that we can suppose without loss of generality that $h_{i_1}=...=h_{i_n}=h$. 

We shall prove the lemma by recursion on $n$. 
A trivial estimate can be obtained using  Cauchy-Schwarz inequality
\begin{equation}
\label{x>1}\left|\int_x^\infty h(t,s_1) \int_{s_1}^\infty h(t,s_2)...\int_{s_{n-1}}^\infty h(t,s_n)\,f(t,s_n)\,ds_n...ds_1\right|\leq  \frac{\|u_s(1/t)\|_{L^2}^n}{\sqrt{x}^n}\|f(t)\|_{L^\infty(x,\infty)}.
\end{equation}
So for $x\geq 1$ the lemma follows immediately. Eventually in the next subsections we will let $t$ tend to $0$, and $n$ to infinity, so such an upperbound is not satisfactory when $x<1$.

For $x\leq 1$ and $n=1$ the lemma was proved in \cite{BV3}, formula (38). For sake of completeness we recall its short proof here. We split the integral from $x$ to $1$ and from $1$ to $\infty$, and we perform an integration by parts on $[x,1]$,
$$\int_x^\infty h(t,s)\,f(t,s)ds=\int_1^{\infty} h(t,s)\,f(t,s) ds+2\sqrt{t}e^{-i\frac{1}{4t}-i\Phi(t,1)}\,u\left(\frac 1t,\frac 1t\right)\,f(t,1) $$
$$-\frac{2\sqrt{t}}{x}e^{-i\frac{x^2}{4t}-i\Phi(t,x)}\,u\left(\frac 1t,\frac xt\right)\,f(t,x)-\int_x^1u\left(\frac 1t,\frac st\right)\left(\frac{2\sqrt{t}}{s}e^{-i\frac{s^2}{4t}-i\Phi(t,s)}\,f(t,s)\right)_s.$$
We use Cauchy-Schwarz and the fact that $u$ belongs to $H^1$ to get
\begin{equation}\label{main}
\left|\int_x^\infty h(t,s)\,f(t,s)ds\right|\leq C\|u(1/t)\|_{H^1}\left( \left(1+\frac t{x^2}\right)\|f(t)\|_{L^\infty(x,\infty)}+\frac tx\,\|\partial_sf(t)\|_{L^2(x,1)}\right),
\end{equation}
which proves the bound in \eqref{nintT}.

Now we suppose that the lemma holds for all $1\leq k\leq n-1$ and we shall prove it for $n$. We denote
$$f_{n}(t,x)=\int_x^\infty h(t,s_1)\int_{s_1}^\infty h(t,s_2) ...\int_{s_{n-1}}^\infty h(t,s_n)\,f(t,s_n)\,ds_n...ds_1\quad,\quad f_0(t,x)=f(t,x).$$
In particular
$$f_n(t,x)=\int_x^\infty h(t,s_1)\, f_{n-1}(t,s_1)\,ds_1.$$
We perform now an integration by parts in the variable $s_1$ on $[x,1]$,
$$f_n(t,x)=\int_1^\infty  h(t,s)\,f_{n-1}(t,s_1)\,ds_1+2\sqrt{t}e^{-i\frac{1}{4t}-i\Phi(t,1)}\,u\left(\frac 1t,\frac 1t\right)\,f_{n-1}(t,1) $$
$$-\frac{2\sqrt{t}}{x}e^{-i\frac{x^2}{4t}-i\Phi(t,x)}\,u\left(\frac 1t,\frac xt\right)\,f_{n-1}(t,x)-\int_x^1u\left(\frac 1t,\frac st\right)\left(\frac{2\sqrt{t}}{s}e^{-i\frac{s^2}{4t}-i\Phi(t,s)}\,f_{n-1}(t,s)\right)_s.$$
It follows by Cauchy-Schwarz that
$$|f_n(t,x)|\leq C\|u(1/t)\|_{H^1}\left(\left(1+\frac{t}{x^2}\right)\|f_{n-1}(t)\|_{L^\infty(x,\infty)}+\frac tx\,\|\partial_sf_{n-1}(t)\|_{L^2(x,1)}\right).$$
Since again by Cauchy-Schwarz
$$\|\partial_sf_{n-1}(t,s)\|_{L^2(x,1)}=\|h(t,s)f_{n-2}(t,s)\|_{L^2(x,1)}\leq C\|u(1/t)\|_{H^1}\,\frac{1}{x}\,\|f_{n-2}(t)\|_{L^\infty(x,\infty)},$$
we get 
$$|f_n(t,x)|\leq C\|u(1/t)\|_{H^1}\left(1+\frac{t}{x^2}\right)\|f_{n-1}(t)\|_{L^\infty(x,\infty)}+C\|u(1/t)\|_{H^1}^2\frac t{x^2}\,\|f_{n-2}(t)\|_{L^\infty(x,\infty)}.$$
From the recursion hypothesis we have for all $s\geq x$ and $1\leq k\leq n-1$
$$|f_{k}(t,s)|\leq C^k\|u(1/t)\|_{H^1}^k \left(1+\frac t{x^2}\right)^{k-1}\left(\left(1+\frac t{x^2}\right)\|f(t)\|_{L^\infty(x,\infty)}+\frac tx\,\|\partial_sf(t)\|_{L^2(\min\{x,1\},1)}\right)$$
and the lemma follows.
\end{proof}

We are now able to iterate formulas \eqref{Tasy} and \eqref{Nasy} as follows.

\begin{lemma}\label{iteration}We set $a_1(t,x)=T^\infty$, $a_2(t,x)=-\Im N^{+\infty} \int_x^{\infty} h(t,s)ds$ and for $k\geq 1$ we define
$a_{2k+1}(t,x)$ by
$$(-1)^k\Re\int_x^\infty h(t,s_1)\int_{s_1}^\infty \overline{h(t,s_2)}...\Re\int_{s_{2k-2}}^\infty h(t,s_{2k-1})\int_{s_{2k-1}}^\infty  \overline{h(t,s_{2k})}T^\infty \,ds_{2k}...ds_1,$$
and $a_{2k+2}(t,x)$ by
$$(-1)^{k+1}\Re\int_x^\infty h(t,s_1)\int_{s_1}^\infty \overline{h(t,s_2)}...\Re\int_{s_{2k-2}}^\infty h(t,s_{2k-1})\int_{s_{2k-1}}^\infty  \overline{h(t,s_{2k})} \Im N^\infty\int_{s_{2k}}^\infty h(t,s_{2k+1})\,ds_{2k+1}...ds_1.$$
Then, there exists a constant $C>0$ such that for all $n\in \mathbb N^*$, $0<t\leq 1$ and $0<x$ the following decomposition holds
\begin{equation}\label{Tangsum}
T(t,x)=\sum_{j=1}^{2n}a_j(t,x) +b_n(t,x)\end{equation}
with
$$|b_n(t,x)|\leq C^{2n}\|u(1/t)\|_{H^1}^{2n} (1+a+\|u(1/t)\|_{L^2})\left(1+\frac t{x^2}\right)^{2n}$$
$$+C_5\sum_{k=0}^{n-1}C^{2k}\|u(1/t)\|_{H^1}^{2k}\left(1+\frac t{x^2}\right)^{2k}\,\left(\sqrt t+\frac{\sqrt t}{x}+\frac{t^2}{x^4}\right),$$
and
$$C_5=C(a+a^2+(1+a^2)C_0+(1+a^2)C_0^2+C_0^3),\qquad C_0=\|u(1)\|_{X^{\gamma^+}}+\|\partial_xu(1)\|_{X^{\gamma^+}}.$$
\end{lemma}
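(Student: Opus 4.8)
The plan is to turn the coupled pair \eqref{Tasy}--\eqref{Nasy} into a single closed integral equation for $T$ and then solve it by a Neumann-type iteration, controlling every multiple integral that appears through Lemma \ref{restest}. I work with $x>0$, writing $T^\infty,N^\infty$ for $T^{+\infty},N^{+\infty}$. First I would substitute \eqref{Nasy} into \eqref{Tasy}: inserting $\tilde N(t,s)=N^\infty+i\int_s^\infty\overline{h(t,s')}\,T(t,s')\,ds'+d_0(t,s)$ into $T(t,x)=T^\infty-\Im\int_x^\infty h(t,s)\tilde N(t,s)\,ds+c_0(t,x)$, and using $-\Im(iw)=-\Re(w)$, gives
$$T=a_1+a_2+\mathcal L T+r_1,$$
where $a_1=T^\infty$, $a_2=-\Im\,N^\infty\int_x^\infty h\,ds$ (which is a real vector since it is an imaginary part), the operator $\mathcal L$ acts on a real vector field $F(t,\cdot)$ by
$$(\mathcal L F)(t,x)=-\Re\int_x^\infty h(t,s)\int_s^\infty\overline{h(t,s')}\,F(t,s')\,ds'\,ds,$$
and the source is $r_1=c_0-\Im\int_x^\infty h\,d_0\,ds$.

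Second, I would iterate this relation. Replacing $T$ on the right by $a_1+a_2+\mathcal LT+r_1$ a total of $n$ times yields, by an immediate recursion on $n$,
$$T=\sum_{j=1}^{2n}a_j+\mathcal L^nT+\sum_{k=0}^{n-1}\mathcal L^k r_1,\qquad a_{2k+1}=\mathcal L^k a_1,\quad a_{2k+2}=\mathcal L^k a_2.$$
Unwinding the nested action of $\mathcal L$ on $a_1=T^\infty$ and on $a_2$ reproduces exactly the alternating-sign multiple integrals with the interior $\Re$'s and the factors $(-1)^k,(-1)^{k+1}$ written in the statement (each application of $\mathcal L$ contributes one $-\Re$, two integrals, and one sign change). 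This identifies $b_n=\mathcal L^nT+\sum_{k=0}^{n-1}\mathcal L^k r_1$, and it remains to estimate it.

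Third, since $|\Re z|\le|z|$, the quantity $\mathcal L^nT$ is bounded by the absolute value of a $2n$-fold nested integral of exactly the form controlled by Lemma \ref{restest} with innermost function $f=T$. I apply that lemma with $\|T\|_{L^\infty}=1$, and I estimate $\|\partial_sT\|_{L^2(\min\{x,1\},1)}$ using that the curvature equals the modulus of the filament function, $|\partial_sT|=c=|\psi|=t^{-1/2}|a+u(1/t,\cdot/t)|$; after the change of variables $y=s/t$ this gives $\|\partial_sT\|_{L^2(\min\{x,1\},1)}\le C\big(a\,t^{-1/2}+\|u(1/t)\|_{L^2}\big)$. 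Feeding this into Lemma \ref{restest} and absorbing the factors $\sqrt t/x$ and $t/x$ into $(1+t/x^2)$ via the elementary $\sqrt y\le 1+y$ (so that $a\sqrt t/x\le a(1+t/x^2)$ and $\tfrac{t}{x}\|u\|_{L^2}\le(1+t/x^2)\|u\|_{L^2}$ for $t\le1$) produces precisely the first term $C^{2n}\|u(1/t)\|_{H^1}^{2n}(1+a+\|u(1/t)\|_{L^2})(1+t/x^2)^{2n}$.

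Fourth, for the source terms I first bound $r_1$ pointwise: the $c_0$ contribution is $C_1\sqrt t/x$ by \eqref{c0d0}, while $\int_x^\infty h\,d_0\,ds$ is estimated by Lemma \ref{restest} with $n=1$ from the bound $\|d_0\|_{L^\infty(x,\infty)}\le C_2(\sqrt t/x+t/x^2+\sqrt t)$ and an $L^2$ bound on $\partial_sd_0$; after using $t\le1$ and comparing powers (for instance $t/x^2\le\sqrt t/x+t^2/x^4$, and similarly for the cross terms) this collapses to $|r_1|\le C_5(\sqrt t+\sqrt t/x+t^2/x^4)$, which is the $k=0$ summand. For $k\ge1$ I apply $\mathcal L^k$ together with Lemma \ref{restest} (innermost $f=r_1$), using $\partial_sr_1=\partial_sc_0+\Im(h\,d_0)$ to control $\|\partial_sr_1\|_{L^2(\min\{x,1\},1)}$; this multiplies the $L^\infty$ estimate of $r_1$ by $C^{2k}\|u(1/t)\|_{H^1}^{2k}(1+t/x^2)^{2k}$, and summing over $0\le k\le n-1$ yields the second term of the asserted bound on $b_n$. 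The main obstacle is exactly the step that replaces the weighted-space estimates of \cite{BV3}: obtaining the $L^2$ control of $\partial_sT$, $\partial_sc_0$ and $\partial_sd_0$ without any weight (the latter two by differentiating the integral definitions of $c_0,d_0$, which turns $\partial_x\int_x^\infty$ into boundary evaluations of already-estimated quantities) is what makes Lemma \ref{restest} applicable; the remaining difficulty is the careful bookkeeping of the constants $C_1,C_2,C_5$ and the systematic absorption of all lower-order powers of $t$ and $1/x$ into the factors $(1+t/x^2)^{m}$.
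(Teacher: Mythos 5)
Your proposal is correct and follows essentially the same route as the paper's proof: the same substitution of \eqref{Nasy} into \eqref{Tasy}, the same iteration (the paper's expression \eqref{recb} for $b_n$ is exactly your $\mathcal L^nT+\sum_{k}\mathcal L^k r_1$), and the same repeated application of Lemma \ref{restest} with innermost $f=T$ (using $T_s=\Re\overline\psi N$) and innermost $f=b_0$ (using $\partial_s c_0$ plus Cauchy--Schwarz on $h\,d_0$), with the same absorption of lower-order powers into $(1+t/x^2)^m$. The only minor deviation is that the paper imports the $L^\infty$ bound \eqref{boundb0} on $b_0$ (your $r_1$) from Lemma 4.4 of \cite{BV3}, whereas you re-derive it by applying Lemma \ref{restest} to $\int_x^\infty h\,d_0\,ds$ together with a direct differentiation of the definition of $d_0$; both give the same bound $C_5\left(\sqrt t+\frac{\sqrt t}{x}+\frac{t^2}{x^4}\right)$, so this is an equivalent, slightly more self-contained treatment of that sub-step.
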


\begin{proof}We start by proving the lemma for $n=1$. 
Formulas \eqref{Tasy} and \eqref{Nasy} obtained in Proposition \ref{propas} give $$T(t,x)=T^{+\infty} -\Im N^{+\infty} \int_x^{\infty} h(t,s)ds-\Re\int_x^{\infty} h(t,s)\int_s^\infty \overline{h(t,s')}T(t,s')ds'ds+b_0(t,x),$$
with
$$b_0(t,x)=c_0(t,x)+\Im\int_x^{\pm\infty} h(t,s)\,d_0(t,s)\,ds.$$
Therefore 
$$b_1(t,x)=- \Re\int_x^\infty h(t,s_1)\int_{s_1}^\infty \overline{h(t,s_2)}\,T(t,s_2)\,ds_2+b_0(t,x).$$ 
We use Lemma \ref{restest} for $f=T$ and the fact that $T_s=\Re\overline{\psi}N$ to get
$$\left|\Re\int_x^\infty h(t,s_1)\int_{s_1}^\infty \overline{h(t,s_2)}\,T(t,s_2)\,ds_2\right|\leq C^2\|u(1/t)\|_{H^1}^2\left(1+\frac t{x^2}\right)^2\left(1+\left\|a+u\left(\frac1t,\frac xt\right)\right\|_{L^2(\min\{x,1\},1)}\right)$$
$$\leq C^2\|u(1/t)\|_{H^1}^2(1+a+\|u(1/t)\|_{L^2})\left(1+\frac t{x^2}\right)^2.$$
We are left with estimating $b_0(t,x)$. We deduce from Lemma 4.4 in \cite{BV3} that for some positive constant $\tilde C$,
\begin{equation}\label{boundb0}
|b_0(t,x)|\leq (C_1+\tilde CC_0(C_2+
a+a^2+C_0+C_0^2))\left(\sqrt t+\frac{\sqrt t}{x}+\frac{t^2}{x^4}\right),
\end{equation}
therefore we obtain the lemma for $n=1$.

For $n\geq 2$ we note that $b_n$ can be expressed as
\begin{equation}\label{recb}
b_n(t,x)=\sum_{k=1}^{n-1}(-1)^{k+1}\Re\int_x^\infty h(t,s_1)\int_{s_1}^\infty \overline{h(t,s_2)}...\int_{s_{2k-1}}^\infty \overline{h(t,s_{2k})}\,b_0(t,s_{2k})\,ds_{2k}...ds_1
\end{equation}
$$+(-1)^n\Re\int_x^\infty h(t,s_1)\int_{s_1}^\infty \overline{h(t,s_2)}...\Re\int_{s_{2n-2}}^\infty h(t,s_{2n-1})\int_{s_{2n-1}}^\infty  \overline{h(t,s_{2n})}T(t,s_{2n}) \,ds_{2n}...ds_1.$$
For the second multiple integral we use again Lemma \ref{restest} for $f=T$ and the fact that $T_s=\Re\overline{\psi}N$ to get the upper bound
$$C^{2n}\|u(1/t)\|_{H^1}^{2n}(1+a+\|u(1/t)\|_{L^2}) \left(1+\frac t{x^2}\right)^{2n}.$$
For the first integral we shall use Lemma \ref{restest} with $f=b_0$:
\begin{equation}\label{est1}
\left|(-1)^{k+1}\Re\int_x^\infty h(t,s_1)\int_{s_1}^\infty \overline{h(t,s_2)}...\int_{s_{2k-1}}^\infty \overline{h(t,s_{2k})}\,b_0(t,s_{2k})\,ds_{2k}...ds_1\right|
\end{equation}
$$\leq C^{2k}\|u(1/t)\|_{H^1}^{2k} \left(1+\frac t{x^2}\right)^{2k-1}\left( \left(1+\frac t{x^2}\right)\|b_0(t)\|_{L^\infty(x,\infty)}+\frac tx\,\|\partial_sb_0(t)\|_{L^2(\min\{x,1\},1)}\right).$$
 We have already the $L^\infty$ bound \eqref{boundb0} on $b_0$. 
Since integrating by parts  in the space variable from the quadratic phase of $h$ the integral in \eqref{Tasy}, one gets (see for instance \cite{BV3}, page 10) 
$$c_0(t,x)=-\Re\frac{2t}{-ix}\,\overline \psi(t,x)N (t,x)-\Re\int_x^\infty \frac{2t}{is^2}\,\overline\psi(t,s)N (t,s)\,ds-\Re\int_x^\infty \frac{2 t}{-is}\,\overline\psi(t,s) N_s (t,s)\,ds,$$
it is easy to see that 
$$\frac tx\,\left\|\partial_sc_0(t)\right\|_{L^2(\min\{x,1\},1)}\leq \left(a+a^2+\|u(1/t)\|_{H^1}+\|u(1/t)\|_{H^1}^2\right)\left(\frac{\sqrt{t}}{x}+\frac t{x^2}+\frac{t\sqrt{t}}{x^3}\right).$$
So we get by Cauchy-Schwarz and \eqref{c0d0}
\begin{equation}\label{est2}
\frac tx\|\partial_s b_0(t)\|_{L^2(\min\{x,1\},1)}\leq \frac tx\,\left\|\partial_sc_0(t)\right\|_{L^2(\min\{x,1\},1)}+\frac {t}{x^2}\|u(1/t)\|_{H^1}\|d_0(t)\|_{L^\infty(\min\{x,1\},1)}\end{equation}
$$\leq C\left(a+a^2+\|u(1/t)\|_{H^1}+\|u(1/t)\|_{H^1}^2+\|u(1/t)\|_{H^1}C_2\right)\left(\sqrt t+\frac{\sqrt{t}}{x}+\frac {t^2}{x^4}\right).$$
Therefore estimates \eqref{est1}, \eqref{boundb0} and \eqref{est2} and
$$\|u(1/t)\|_{H^1}\leq C(\|u(1)\|_{X^{\gamma^+}}+\|\partial_xu(1)\|_{X^{\gamma^+}})=C_0,$$
yield
$$\left|(-1)^{k+1}\Re\int_x^\infty h(t,s_1)\int_{s_1}^\infty \overline{h(t,s_2)}...\int_{s_{2k-1}}^\infty \overline{h(t,s_{2k})}\,b_0(t,s_{2k})\,ds_{2k}...ds_1\right|$$
$$\leq C^{2k}\|u(1/t)\|_{H^1}^{2k} \left(1+\frac t{x^2}\right)^{2k}\,C_5\left(\sqrt t+\frac{\sqrt{t}}{x}+\frac {t^2}{x^4}\right),$$
so the lemma follows.
\end{proof}

Our next aim is to replace each occurence of $h(t,s)$ in $a_j(t,x)$ by a function independent of time $\tilde h(s)$,
\begin{equation}
\label{tildeh}\tilde{h}(x)=i\widehat{f_+}\left(\frac x2\right)  \,e^{-ia^2\log |x|},
\end{equation}
up to getting a small error term. This will lead us to eventually identify the limit of $T(t,x)$ when $t$ goes to $0$. 
\begin{lemma}\label{lemmaansatzprev}
There exists a constant $\tilde C>0$ such that for all $g\in L^\infty$ with $g_s\in L^1\cap L^2$, $0<x\leq\tilde x$,
\begin{equation}\label{ansatz}
\left|\int_x^{\tilde x} (h(t,s)-\tilde h(s)) g(s)\,ds\right|\leq \tilde C_6(\|g\|_{L^\infty(x,\infty)}+\|g_s\|_{L^1(x,\infty)})\left(\frac{\sqrt{t}}{x}+t^{\frac 16^-}\right),
\end{equation}
with
$$\tilde C_6=\tilde C(\|u_1\|_{X^{\gamma^+}}+\|\partial_x u_1\|_{X^{\gamma^+}}+\|\partial_x^2u_1\|_{X^{\gamma^+}}).$$
\end{lemma}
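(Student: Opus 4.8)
The plan is to recognise $\tilde h$ as the $t\to 0$ limit of $h(t,\cdot)$ and to make this convergence quantitative. By construction $h(t,s)$ is assembled from $u_s$ evaluated at the large time $1/t$ and the large spatial point $s/t$, and $u$ scatters to $f_+$ in the sense of \cite{BV2}. So I would first set $T=1/t$, $y=s/t$ and replace $u(T)$ by its asymptotic profile $e^{i\frac{a^2}{2}\log T}e^{i(T-1)\partial_x^2}f_+$, whose spatial derivative is $e^{i\frac{a^2}{2}\log T}e^{i(T-1)\partial_x^2}\partial_x f_+$. Applying the classical stationary-phase asymptotic $e^{iT\partial_x^2}\phi(y)\sim \frac{1}{\sqrt{4\pi i T}}\,e^{i\frac{y^2}{4T}}\widehat\phi\!\left(\frac{y}{2T}\right)$ with $\phi=\partial_x f_+$ (the shift by $1$ in the propagator time being a harmless lower-order correction), and using $\frac{y}{2T}=\frac s2$, $\frac{y^2}{4T}=\frac{s^2}{4t}$, $\log T=-\log t$, one checks that after inserting this into the definition of $h$ every oscillatory factor $e^{\pm i s^2/4t}$, $e^{\pm i\frac{a^2}{2}\log t}$ and every power of $t$ cancel, so the leading term is precisely $\tilde h(s)=i\,\widehat{f_+}(s/2)e^{-ia^2\log|s|}$ (the normalising constant $\tfrac1{\sqrt{4\pi i}}$ being absorbed into the convention for $\mathcal F$ and $e^{it\partial_x^2}$). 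As a sanity check $\widehat{f_+}(s/2)=g(s)e^{ia^2\log|s|}$, hence $\tilde h=i\,g$ and $|\tilde h|=c$, the curvature of $\chi_0$. This identifies the two pieces to estimate: $h-\tilde h$ is a \emph{scattering error} (from $u$ versus its linear profile) plus a \emph{stationary-phase remainder}.

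For the scattering error I would invoke the asymptotic completeness estimate of \cite{BV2} controlling $u$ up to two derivatives, $\|u(T)-e^{i\frac{a^2}{2}\log T}e^{i(T-1)\partial_x^2}f_+\|_{H^2}\lesssim T^{-(\frac14-\gamma^+)}(\cdots)$; this is exactly why the constant $\tilde C_6$ must involve $u_1,\partial_x u_1$ and $\partial_x^2 u_1$ in $X^{\gamma^+}$. After the change of variables $s=ty$ the prefactor $\frac{2}{s\sqrt t}$ becomes $\frac{2}{\sqrt t\,y}$, and a Cauchy--Schwarz in $y$ against $\frac1y$ on $(x/t,\infty)$ converts the $H^2$ decay into a bound carrying the weight $\frac1{\sqrt x}$ together with a power of $t$, while $|g|\le\|g\|_{L^\infty}$ disposes of the amplitude.

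The stationary-phase remainder is controlled by the standard estimate for $e^{iT\partial_x^2}\phi$ minus its leading profile, whose size is governed by $\|\phi''\|$ — once more two derivatives, consistent with $\tilde C_6$. The essential point, and the whole purpose of the lemma, is to carry this out \emph{without} weights, trading the moment conditions ordinarily used in such asymptotics for derivatives of $f_+$. Both error pieces still carry the quadratic oscillation $e^{-is^2/4t}$, and to extract the announced $\frac{\sqrt t}{x}$ I would integrate by parts in $s$ against this phase: the boundary term at $s=x$ produces $\frac{\sqrt t}{x}$ and the derivative falling on the amplitude produces the factor $\|g_s\|_{L^1}$, while the interior, where the phase oscillates rapidly, yields the uniform $t^{\frac16^-}$ contribution.

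The main obstacle is pinning down the exponent $\frac16^-$. Unlike the $\frac{\sqrt t}{x}$ term, which is a clean integration-by-parts boundary contribution, $t^{\frac16^-}$ is a \emph{global-in-$x$} rate and must emerge from balancing the slow $H^2$-scattering decay against the oscillatory/dispersive gain of the free evolution. Concretely I expect to split the $s$-integral at a threshold $s\sim t^{\alpha}$, bound the inner region $s\lesssim t^{\alpha}$ crudely (exploiting its short length) and the outer region through the oscillatory decay, and then optimise $\alpha$; the resulting exponent, sharp up to an $\epsilon$ loss inherited from $\gamma^+$, is $\frac16$. Achieving this balance uniformly in $0<t\le 1$ and $0<x\le\tilde x$, with all constants expressed solely through the unweighted norms $\|u_1\|_{X^{\gamma^+}}$, $\|\partial_x u_1\|_{X^{\gamma^+}}$ and $\|\partial_x^2 u_1\|_{X^{\gamma^+}}$, is the delicate part of the argument.
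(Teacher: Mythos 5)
Your overall architecture (replace $u(1/t)$ by its scattering profile, use the $M_TD_T\mathcal F M_T$ asymptotics of the free propagator, then oscillatory-integral estimates) is the natural skeleton, and it is indeed what underlies the proof this paper points to -- note the paper gives no self-contained argument here: it invokes Lemma 4.3 of \cite{BV3} verbatim, stressing that that proof never uses weights. But two of your quantitative steps fail, and they are precisely the hard points. First, the ``stationary-phase remainder'' is \emph{not} governed by $\|\phi''\|$: writing $e^{iT\partial_x^2}\phi(y)=\frac{e^{iy^2/4T}}{\sqrt{4\pi iT}}\,\mathcal F\bigl[e^{i(\cdot)^2/4T}\phi\bigr]\bigl(\tfrac{y}{2T}\bigr)$, the deviation from the leading profile is $\frac{e^{iy^2/4T}}{\sqrt{4\pi iT}}\,\mathcal F\bigl[(e^{i(\cdot)^2/4T}-1)\phi\bigr]\bigl(\tfrac{y}{2T}\bigr)$, whose size is controlled by \emph{moments} $\|z^2\phi\|$ (equivalently, derivatives of $\widehat\phi$), not by derivatives of $\phi$. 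For $\phi=\partial_x f_+$ these moments are exactly the weighted norms of $f_+$ that the lemma must avoid -- the paper explicitly notes $f_+$ does not in general lie in weighted spaces -- so this step is unavailable, and identifying $\|z^2\phi\|$ with ``two derivatives, consistent with $\tilde C_6$'' is a conflation, not a proof.

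Second, the exponent $t^{\frac16^-}$, which is the whole content of the lemma (it propagates through Lemma \ref{ajest} and Proposition \ref{proptanglim} into \eqref{convT}), is never derived, and it cannot be derived from the ingredients you allow yourself. Using only the wave-operator rate of \cite{BV2}, $\|u(1/t)-e^{i\frac{a^2}{2}\log(1/t)}e^{i(1/t-1)\partial_x^2}f_+\|_{H^2}\lesssim t^{\frac14-\gamma^+}$, your Cauchy--Schwarz step bounds the scattering-error contribution by $\|g\|_{L^\infty}\,t^{\frac14-\gamma^+}x^{-\frac12}$. At $x\sim t^{\frac13}$ this equals $t^{\frac1{12}-\gamma^+}$, which exceeds the allowed right-hand side $\frac{\sqrt t}{x}+t^{\frac16^-}\sim t^{\frac16}$ by the diverging factor $t^{-\frac1{12}-\gamma^+}$; worse, since the hypothesis only gives \emph{some} $\gamma<\frac14$, for $\gamma$ near $\frac14$ your bound is not even bounded (e.g.\ $\gamma^+=0.21$, $x=t^{0.1}$ gives $t^{-0.01}$). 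Your proposed ``split at $s\sim t^\alpha$ and optimise'' cannot repair this with these inputs: the gain that actually produces $\frac16$ comes from information your sketch never uses, namely the low-frequency content of the $X^\gamma$/$Y^\gamma$ spaces (pointwise bounds of type $|\widehat{f_+}(\xi)|,|\hat u(\tau,\xi)|\lesssim|\xi|^{-2\gamma}$ with $\gamma<\frac14$, for which $\int_0^{t^{1/3}}s^{-2\gamma}\,ds\lesssim t^{\frac{1-2\gamma}{3}}\le t^{\frac16}$) together with oscillation gains in the region $s\gtrsim t^{1/3}$; in your write-up the $X^{\gamma^+}$ norms appear only as labels inside the constant $\tilde C_6$. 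So the proposal reproduces the easy boundary term $\frac{\sqrt t}{x}$ but leaves the uniform $t^{\frac16^-}$ unproven, and the route indicated for it would fail.
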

The proof of this result goes exactly like the one of Lemma 4.3 in \cite{BV3} because in that one we did not use the weight condition. In fact, note that the asymptotic state $f_+$ does not belong in general to weighted spaces.

\begin{lemma}\label{ajest}There exists a constant $C>0$ such that for all $n\in \mathbb N$, $0<t\leq 1$ and $0<x$ we have
\begin{equation}\label{replace}
\left|\int_x^\infty h(t,s_1)\int_{s_1}^\infty h(t,s_2) ...\int_{s_{n-1}}^\infty h(t,s_n)\,ds_n...ds_1-\int_x^\infty \tilde h(s_1)\int_{s_1}^\infty \tilde h(s_2) ...\int_{s_{n-1}}^\infty \tilde h(s_n)\,ds_n...ds_1\right|
\end{equation}
$$\leq C^n_6\left(1+\frac t{x^2}\right)^{n-1}\left(1+\frac 1x\right)\left(\frac{\sqrt{t}}{x}+t^{\frac 16^-}\right),$$
with
$$C_6=C(\|u_1\|_{X^{\gamma^+}}+\|\partial_x u_1\|_{X^{\gamma^+}}+\|\partial_x^2u_1\|_{X^{\gamma^+}}).$$
Moreover, any ocurrence of $h$ can be replaced by $\overline h$, provided that the correspondent $\tilde h$ is replaced by $\overline{\tilde h}$. 
\end{lemma}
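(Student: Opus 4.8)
The plan is to interpolate between the two iterated integrals in \eqref{replace} by a telescoping argument, replacing the factors $h(t,\cdot)$ by $\tilde h$ one at a time, controlling each single replacement by Lemma \ref{lemmaansatzprev} and the surviving $h$-factors by Lemma \ref{restest}. For $0\leq m\leq n$ let $P_m$ denote the iterated integral in which the first (outermost) $m$ factors are $\tilde h$ and the remaining $n-m$ factors are $h(t,\cdot)$, so that $P_0$ and $P_n$ are exactly the two integrals whose difference is estimated in \eqref{replace}, and
$$
P_0-P_n=\sum_{m=0}^{n-1}(P_m-P_{m+1}).
$$
Each difference $P_m-P_{m+1}$ isolates a single factor $h-\tilde h$ at position $m+1$: an outer chain of $m$ factors $\tilde h$ integrated from $x$, the factor $(h-\tilde h)(t,s_{m+1})$, and an inner chain $G_{m+1}(t,s)=\int_s^\infty h(t,s_{m+2})\cdots\int_{s_{n-1}}^\infty h(t,s_n)\,ds_n\cdots ds_{m+2}$ of $n-m-1$ factors $h$ (with $G_n\equiv 1$).

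First I would estimate the inner chain $G_{m+1}$, which plays the role of the function $g$ in Lemma \ref{lemmaansatzprev}. Applying Lemma \ref{restest} with $f\equiv 1$ gives $\|G_{m+1}(t)\|_{L^\infty(y,\infty)}\leq C^{n-m-1}\|u(1/t)\|_{H^1}^{n-m-1}(1+t/y^2)^{n-m-1}$. For the derivative, $\partial_s G_{m+1}=-h(t,s)\,G_{m+2}(t,s)$; writing $|h(t,s)|=\tfrac{2}{s\sqrt t}|u_s(1/t,s/t)|$, rescaling $\sigma=s/t$, and using Cauchy--Schwarz against $1/\sigma\in L^2(y/t,\infty)$ trades the singular $1/\sqrt t$ for the decay $1/\sqrt y$, yielding $\|\partial_s G_{m+1}(t)\|_{L^1(y,\infty)}\leq C^{n-m-1}\|u(1/t)\|_{H^1}^{n-m-1}(1+t/y^2)^{n-m-1}\,y^{-1/2}$. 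Hence $\|G_{m+1}\|_{L^\infty(y,\infty)}+\|\partial_s G_{m+1}\|_{L^1(y,\infty)}\leq C^{n-m-1}\|u(1/t)\|_{H^1}^{n-m-1}(1+t/y^2)^{n-m-1}(1+y^{-1/2})$, precisely the quantity entering Lemma \ref{lemmaansatzprev}. Taking $g=G_{m+1}(t,\cdot)$, $\tilde x=\infty$, and lower endpoint $y=s_m$, the innermost difference integral is bounded, as a function of $s_m$, by
$$
\Phi(s_m):=\tilde C_6\,C^{n-m-1}\|u(1/t)\|_{H^1}^{n-m-1}(1+t/s_m^2)^{n-m-1}(1+s_m^{-1/2})\Big(\tfrac{\sqrt t}{s_m}+t^{\frac16^-}\Big).
$$

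It remains to integrate $\Phi$ against the outer chain of $m$ factors $\tilde h$. Since the nesting forces $x\leq s_1\leq\cdots\leq s_m$ and every factor in $\Phi$ is nonincreasing in its argument, we have $\Phi(s_m)\leq\Phi(x)$; pulling $\Phi(x)$ out and bounding each of the $m$ outer layers by $\|\tilde h\|_{L^1}$ gives $|P_m-P_{m+1}|\leq\|\tilde h\|_{L^1}^m\,\Phi(x)$. Now $\|u(1/t)\|_{H^1}\leq C_0\lesssim C_6$ by \eqref{unifbounds}, while $\tilde C_6\lesssim C_6$ and $\|\tilde h\|_{L^1}\lesssim C_6$ (recall \eqref{tildeh}: $|\tilde h(x)|=|\widehat{f_+}(x/2)|$, and since $(1+|\xi|^4)\widehat{f_+}\in L^2$ one has $\widehat{f_+}\in L^1$), and $1+x^{-1/2}\lesssim 1+x^{-1}$. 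Therefore
$$
|P_m-P_{m+1}|\leq C_6^{\,n}\,(1+t/x^2)^{n-m-1}\,(1+\tfrac1x)\,\Big(\tfrac{\sqrt t}{x}+t^{\frac16^-}\Big).
$$
Bounding $(1+t/x^2)^{n-m-1}\leq(1+t/x^2)^{n-1}$ and summing the $n$ terms over $m$ absorbs the factor $n$ into an enlargement of the constant $C$ in $C_6$ (as $n\leq 2^n$), which is exactly \eqref{replace}. The mixed $h/\overline h$ statement follows verbatim, since every bound above depends only on the moduli $|h|$, $|\tilde h|$ and on $|h-\tilde h|=|\overline h-\overline{\tilde h}|$, exactly as in the reduction to $h_{i_1}=\cdots=h_{i_n}=h$ used in Lemma \ref{restest}.

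The hard part is the derivative estimate for the inner chain in the second paragraph. The factor $h(t,s)$ carries the singular weight $\tfrac{1}{s\sqrt t}$, so a naive $L^1$ bound on $\partial_s G_{m+1}$ would blow up as $t\to 0$; the resolution is to convert this $\tfrac1{\sqrt t}$ into the spatial gain $\tfrac1{\sqrt{s_m}}$ through the rescaling $\sigma=s/t$ and Cauchy--Schwarz against $1/\sigma\in L^2$, \emph{and} to do so while keeping the constants geometric in $n$ — each layer contributing a factor $\lesssim C_6(1+t/x^2)$ and no worse. Securing this uniformity in $n$, rather than merely convergence for each fixed $n$, is what makes the estimate usable later when the iteration is summed and $n\to\infty$.
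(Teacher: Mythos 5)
Your proposal is correct and is essentially the paper's own argument: the paper proves the lemma by induction on $n$, splitting the difference into a term with $\tilde h$ outermost (handled by the induction hypothesis and $\|\widehat{f_+}\|_{L^1}$) plus a term with $(h-\tilde h)$ outermost (handled by Lemma \ref{lemmaansatzprev} applied to $g=f_{n-1}(t,\cdot)$, whose $L^\infty$ and $\|\partial_s \cdot\|_{L^1}$ norms are controlled via Lemma \ref{restest} and Cauchy--Schwarz exactly as you control $G_{m+1}$), and unrolling that induction yields precisely your telescoping sum $\sum_m (P_m-P_{m+1})$. The only cosmetic difference is your justification of $\|\tilde h\|_{L^1}\lesssim C_6$ via the moment condition on $\widehat{f_+}$, which only gives finiteness with a constant foreign to $C_6$; the quantitatively correct route, which the paper uses for its term $I_1$, is $\|\widehat{f_+}\|_{L^1}\le c\|f_+\|_{H^1}\le c\,C(a)\left(\|u(1)\|_{X^{\gamma^+}}+\|\partial_x u(1)\|_{X^{\gamma^+}}\right)$.
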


\begin{proof}

Lemma \ref{lemmaansatzprev} with $g(s)=1$ gives the Lemma for $n=1$, by choosing $C>\tilde C$. For $n\geq 2$ we shall proceed by recursion, with a constant $C$ 
\begin{equation}\label{condC}
C>\max\{1,\tilde C, 3c\,C(a)+\hat C\},
\end{equation}
where $\hat C$ stand from the constant of Lemma \ref{restest}, $c$ stands for the constant in Cauchy-Schwarz inequality and $C(a)$ is used in 
$$\|f_+\|_{H^1}\leq C(a)(\|u_1\|_{X^{\gamma^+}}+\|\partial_x u_1\|_{X^{\gamma^+}}).$$
We write the difference in \eqref{replace} as
$$\int_x^\infty \tilde h(s_1)\left(\int_{s_1}^\infty h(t,s_2) ...\int_{s_{n-1}}^\infty h(t,s_n)\,ds_n...ds_1-\int_{s_1}^\infty \tilde h(s_2) ...\int_{s_{n-1}}^\infty \tilde h(s_n)\,ds_n...ds_1\right)$$
$$+\int_x^\infty \left(h(t,s_1)-\tilde h(s_1)\right)\int_{s_1}^\infty h(t,s_2) ...\int_{s_{n-1}}^\infty h(t,s_n)\,ds_n...ds_1=I_1(t,x)+I_2(t,x).$$
By the recursion hypothesis
$$|I_1(t,x)|\leq \|\widehat{f_+}\|_{L^1}C^{n-1}_6\left(1+\frac t{x^2}\right)^{n-2}\left(1+\frac 1x\right)\left(\frac{\sqrt{t}}{x}+t^{\frac 16^-}\right).$$
$$\leq \frac{c\,C(a)}{C}\,C^{n}_6\left(1+\frac t{x^2}\right)^{n-2}\left(1+\frac 1x\right)\left(\frac{\sqrt{t}}{x}+t^{\frac 16^-}\right).$$

Let us denote now 
$$f_{n}(t,x)=\int_x^\infty h(t,s_1)\int_{s_1}^\infty h(t,s_2) ...\int_{s_{n-1}}^\infty h(t,s_n)\,ds_n...ds_1,\quad f_0(t,x)=1.$$
We apply \eqref{ansatz} to get
\begin{equation}\label{I2}
|I_2(t,x)|=\left|\int_x^{\tilde \infty} (h(t,s_1)-\tilde h(s_1)) f_{n-1}(t,s_1)\,ds_1\right|
\end{equation}
$$\leq C_6(\|f_{n-1}(t)\|_{L^\infty(x,\infty)}+\|\partial_s f_{n-1}(t)\|_{L^1(x,\infty)})\left(\frac{\sqrt{t}}{x}+t^{\frac 16^-}\right).$$
Lemma \ref{restest} with $f(t,s)=1$ gives us for $k\geq 1$
\begin{equation}\label{fninfty}
\|f_k(t)\|_{L^\infty(x,\infty)}\leq \hat C^k\|u(1/t)\|_{H^1}^k\left(1+\frac t{x^2}\right)^k.
\end{equation}
Since
$$|\partial_s f_{n-1}(t,s)|=|h(t,s)f_{n-2}(t,s)|=\frac{2}{s\sqrt{t}}\left|(u_s)\left(\frac 1t,\frac st\right)\right||f_{n-2}(t,s)|,$$
we obtain by Cauchy-Schwarz
\begin{equation}\label{fnLp}
\|\partial_sf_{n-1}(t)\|_{L^1(x,\infty)}\leq 2c\,\hat C^{n-2}\|u(1/t)\|_{H^1}^{n-1}\,\frac 1x\,\left(1+\frac t{x^2}\right)^{n-2}.
\end{equation}
Therefore by using \eqref{fninfty} and \eqref{fnLp} in \eqref{I2} we obtain
$$|I_2(t,x)|\leq \left(1+\frac{2c}{\hat C}\right) \hat C^{n-1}\|u(1/t)\|_{H^1}^{n-1}C_6\left(1+\frac t{x^2}\right)^{n-1}\left(1+\frac 1x\right)\left(\frac{\sqrt{t}}{x}+t^{\frac 16^-}\right),$$
$$\leq \left(1+\frac{2c}{\hat C}\right) \left(\frac{\hat C\,C(a)}{C}\right)^{n-1}C_6^n\left(1+\frac t{x^2}\right)^{n-1}\left(1+\frac 1x\right)\left(\frac{\sqrt{t}}{x}+t^{\frac 16^-}\right),$$
and the lemma follows also for $n\geq 2$ since choosing $C>\hat C$ such that \eqref{condC} holds implies
$$\frac {c\,C(a)}C+\left(1+\frac{2c}{\hat C}\right)\frac {\hat C\,C(a)}C<1.$$\\
\end{proof}

\begin{prop}\label{proptanglim}If $u(1)$ and its first two derivatives are small in $X^{\gamma^+}$ then for all $x>0$ there exists a limit of $T(t,x)$ at time $t=0$,
$$\underset{x\rightarrow\infty}{\lim}T(t,x)=T(0,x),$$
with the rate of convergence
$$T(t,x)-T(0,x)=\mathcal O(t^{\frac 16^-}).$$
Moreover, for $t\leq x^2$ 
$$|T(t,x)-T(0,x)|\leq C_7(t,x),$$
where $C_7(t,x)$ is a linear combination of powers $\left(\frac{\sqrt {t}}{x}\right)^s,1\leq s\leq 4$.
\end{prop}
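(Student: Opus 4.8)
The plan is to upgrade the decomposition of Lemma \ref{iteration} into a statement about a \emph{time-independent} series. Every term $a_j(t,x)$ is built from iterated integrals of the oscillatory kernel $h(t,s)$, whereas the candidate limit should be built from the time-independent kernel $\tilde h$ of \eqref{tildeh}; the whole argument is a Neumann-type summation relating the two. First I would define the candidate trace: for each $j$ let $\tilde a_j(x)$ be obtained from $a_j(t,x)$ by replacing every occurrence of $h(t,s)$ (resp.\ $\overline h(t,s)$) by $\tilde h(s)$ (resp.\ $\overline{\tilde h}(s)$), keeping the constants $T^{+\infty}$ and $\Im N^{+\infty}$, so that $\tilde a_1=T^{+\infty}$, $\tilde a_2=-\Im N^{+\infty}\int_x^\infty\tilde h(s)\,ds$, and so on. I would then set
$$T(0,x):=\sum_{j=1}^\infty\tilde a_j(x),$$
and verify that this series converges absolutely and locally uniformly in $x>0$. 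This is a geometric estimate: the time-independent analogue of \eqref{fninfty} (Lemma \ref{restest} read with $\tilde h$ in place of $h$) bounds the $j$-fold iterated integral of $\tilde h$ by $(C\|\widehat{f_+}\|)^{\,j}$, which is summable precisely because $f_+$ and its derivatives are small in $X^\gamma$.

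Second, I would estimate $|T(t,x)-T(0,x)|$ for $t\to 0$ through the telescoping identity
$$T(t,x)-T(0,x)=\sum_{j=1}^{2n}\bigl(a_j(t,x)-\tilde a_j(x)\bigr)-\sum_{j=2n+1}^\infty\tilde a_j(x)+b_n(t,x).$$
The first sum is handled term by term by Lemma \ref{ajest}, which bounds $|a_j-\tilde a_j|$ by $C_6^{\,j}(1+t/x^2)^{j-1}(1+1/x)\bigl(\tfrac{\sqrt t}{x}+t^{\frac16^-}\bigr)$; the second sum is the tail of the series from the first step and tends to $0$ as $n\to\infty$; the remainder $b_n$ is controlled by Lemma \ref{iteration}. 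The decisive step is to sum these geometric series, and for this I would restrict to the regime $t\leq x^2$, so that $(1+t/x^2)\leq 2$, and use the smallness of $u(1)$ and its first two derivatives in $X^{\gamma^+}$ to ensure $C_6(1+t/x^2)<1$ and $C\|u(1/t)\|_{H^1}(1+t/x^2)<1$. Letting $n\to\infty$, the ``deep'' part of the $b_n$ bound (the term $C^{2n}\|u(1/t)\|_{H^1}^{2n}(1+t/x^2)^{2n}$) vanishes, the tail vanishes, and what survives is a convergent geometric combination of the error terms $\tfrac{\sqrt t}{x}+t^{\frac16^-}$ (from the differences $a_j-\tilde a_j$) and $\sqrt t+\tfrac{\sqrt t}{x}+\tfrac{t^2}{x^4}$ (from the $b_0$-part of $b_n$).

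Collecting the surviving terms yields both assertions. The contribution that does not decay in $x$, namely $t^{\frac16^-}$, is exactly the stationary-phase error produced when $h$ is replaced by $\tilde h$ (Lemma \ref{lemmaansatzprev}); it dominates for fixed $x$ as $t\to 0$ and gives both the existence of the limit and the uniform rate $T(t,x)-T(0,x)=\mathcal O(t^{\frac16^-})$. In the regime $t\leq x^2$ the spatially decaying contributions reorganize into a linear combination of the powers $\bigl(\tfrac{\sqrt t}{x}\bigr)^s$, $1\leq s\leq 4$ (note $\tfrac{\sqrt t}{x}=(\tfrac{\sqrt t}{x})^1$ and $\tfrac{t^2}{x^4}=(\tfrac{\sqrt t}{x})^4$), which is the refined bound $C_7(t,x)$. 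The case $x<0$ is identical by symmetry, replacing $T^{+\infty},N^{+\infty}$ by $T^{-\infty},N^{-\infty}$ and integrating towards $-\infty$.

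The main obstacle is the competition between the two smallness mechanisms and the growth $(1+t/x^2)^n$ of the iterated-integral bounds: one cannot send the number of iterations $n\to\infty$ unless the prefactor $(1+t/x^2)$ is tamed, which forces the restriction $t\leq x^2$, and even then the geometric series converge only because $u(1)$—together with its first two derivatives, the second derivative entering through the oscillatory replacement of Lemma \ref{ajest}—is taken sufficiently small. Making the three series (the differences $a_j-\tilde a_j$, the tail of $\sum\tilde a_j$, and the remainder $b_n$) converge simultaneously, with a limit independent of the truncation parameter $n$, is the heart of the matter; once this summation is justified, the two stated rates follow by reading off the surviving error terms.
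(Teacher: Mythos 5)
Your construction of the trace and your proof of the $t^{\frac 16^-}$ rate are essentially the paper's own argument: the same time-independent coefficients $\tilde a_j$ obtained by replacing $h$ with $\tilde h$, the same decomposition $T(t,x)=\sum_{j=1}^{2n}\tilde a_j(x)+R_n(t,x)$ obtained by combining Lemma \ref{iteration} with Lemma \ref{ajest}, the same geometric bound $|\tilde a_j(x)|\leq C^{2j}\|f_+\|_{H^1}^{2j}$ for the tail, the same restriction $t\leq x^2$ to tame the factors $\left(1+\frac t{x^2}\right)^{n}$, and the same use of smallness of $u(1)$ and its first two derivatives in $X^{\gamma^+}$ to sum the series before sending $n\to\infty$ and then $t\to 0$. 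Up to and including the statement $T(t,x)-T(0,x)=\mathcal O(t^{\frac 16^-})$, your proposal is correct and follows the paper.

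The gap is in the final claim, the refined bound $|T(t,x)-T(0,x)|\leq C_7(t,x)$ for $t\leq x^2$. You assert that ``the spatially decaying contributions reorganize'' into a linear combination of $\left(\frac{\sqrt t}{x}\right)^s$, $1\leq s\leq 4$, but the surviving error terms of the telescoping are not all spatially decaying: Lemma \ref{ajest} contributes $\left(1+\frac 1x\right)\left(\frac{\sqrt t}{x}+t^{\frac 16^-}\right)$ and the $b_0$-part of Lemma \ref{iteration} contributes $C_5\left(\sqrt t+\frac{\sqrt t}{x}+\frac{t^2}{x^4}\right)$, so the telescoping bound retains the terms $t^{\frac 16^-}$ and $\sqrt t$, neither of which is dominated by powers of $\frac{\sqrt t}{x}$ --- for fixed $t$ they do not vanish as $x\to\infty$, whereas $C_7(t,x)$ does. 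Hence the refined estimate cannot be read off from the same decomposition; it requires a second stage that your proposal omits. The paper carries this out by first showing $T(0)\in L^\infty$ with $T_x(0)\in L^1\cap L^2$, proving that $\tilde N(t,x)$ also has a trace $\tilde N(0,x)$, and deriving the closed integral equations \eqref{eqN0}, \eqref{eqT00} and \eqref{eqT0int} satisfied by the limits; the bound $C_7(t,x)$ then follows by comparing the exact formulations \eqref{Tasy}--\eqref{Nasy} with bounds \eqref{c0d0} for $(T(t),\tilde N(t))$ against these limit equations, following the scheme of Proposition 4.6 in \cite{BV3}, with Lemma \ref{restest} and Lemma \ref{lemmaansatzprev} as quantitative inputs. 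You would need to add this comparison argument (or an equivalent one) for the last assertion of the Proposition.
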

\begin{proof}We fix $0<x$. Let $\tilde a_1(x)=T^\infty$, $\tilde a_2(x)=-\Im N^{+\infty} \int_x^{\infty} \tilde h(s)ds$ and for $k\geq 1$ set $\tilde a_{2k+1}(x)$ to be
\begin{equation}
\label{tildea} (-1)^k\Re\int_x^\infty \tilde h(s_1)\int_{s_1}^\infty \overline{\tilde h(s_2)}...\Re\int_{s_{2k-2}}^\infty \tilde h(s_{2k-1})\int_{s_{2k-1}}^\infty  \overline{\tilde h(s_{2k})}T^\infty \,ds_{2k}...ds_1,
\end{equation}
and $\tilde a_{2k+2}(x)$ to be 
\begin{equation}
\label{tildeaa}
(-1)^{k+1}\Re\int_x^\infty \tilde h(s_1)\int_{s_1}^\infty \overline{\tilde h(s_2)}...\Re\int_{s_{2k-2}}^\infty \tilde{h}(s_{2k-1})\int_{s_{2k-1}}^\infty  \overline{\tilde h(s_{2k-2})} \Im N^\infty\int_{s_{2k}}^\infty \tilde h(s_{2k+1})\,ds_{2k+1}...ds_1.
\end {equation}
Gathering  Lemma \ref{iteration} and Lemma \ref{ajest}  we can decompose
\begin{equation}\label{Tangsum}
T(t,x)=\sum_{j=1}^{2n}\tilde a_j(x) +R_n(t,x)
\end{equation}
with
$$ |R_n(t,x)|\leq C^{2n}\|u(1/t)\|_{H^1}^{2n}(1+a+\|u(1/t)\|_{L^2}) \left(1+\frac t{x^2}\right)^{2n}$$
$$+C_5\sum_{k=0}^{n-1}C^{2k}\|u(1/t)\|_{H^1}^{2k}\left(1+\frac t{x^2}\right)^{2k}\,\left(\sqrt t+\frac{\sqrt t}{x}+\frac{t^2}{x^4}\right)$$
$$+\sum_{j=1}^{2n-1}C^j_6\left(1+\frac t{x^2}\right)^{j-1}\left(1+\frac 1{x}\right)\left(\frac{\sqrt{t}}{x}+t^{\frac 16^-}\right).$$
Recall that
$$\|u(1/t)\|_{H^1}\leq C(a)(\|u(1)\|_{X^{\gamma^+}}+\|\partial_x u(1)\|_{X^{\gamma^+}}),$$
and
$$C_6=C(\|u(1)\|_{X^{\gamma^+}}+\|\partial_x u(1)\|_{X^{\gamma^+}}+\|\partial_x^2u(1)\|_{X^{\gamma^+}}).$$
Let $t\leq x^2$. 
Hence, $u(1)$ and its first two derivatives are small in $X^{\gamma^+}$ it follows that there exists $n(t,x)$ large enough such that for all $n\geq n(t,x)$
$$ |R_{n}(t,x)|\leq 2C_5\left(\sqrt t+\frac{\sqrt t}{x}+\frac{t\sqrt t}{x^3}\right)+2\left(1+\frac 1{x}\right)\left(\frac{\sqrt{t}}{x}+t^{\frac 16^-}\right).$$
Note also that 
$$|\tilde a_j(x)|\leq C^{2j}\|f_+\|_{H^1}^{2j}\leq C^{2j}C(a)^{2j}\left(\|u(1)\|_{X^{\gamma^+}}+\|\partial_xu(1)\|_{X^{\gamma^+}}\right)^{2j}.$$
Therefore, there exists $\tilde n(t,x)\geq n(t,x)$ large enough such that
$$ |R_{\tilde n(t,x)}(t,x)|+\left |\sum_{j=2\tilde n(t,x)}^{\infty}\tilde a_j(x)\right|\leq 3C_5\left(\sqrt t+\frac{\sqrt t}{x}+\frac{t\sqrt t}{x^3}\right)+3\left(1+\frac 1{x}\right)\left(\frac{\sqrt{t}}{x}+t^{\frac 16^-}\right).$$
From \eqref{Tangsum} we get then 
$$T(t,x)-\sum_{j=1}^\infty \tilde a_j(x)=\mathcal O(t^{\frac 16^-}),$$
and the first part of the Proposition follows taking $T(0,x)=\sum_{j=1}^\infty \tilde a_j(x) $ and by  letting $t$ go to $0$.

Now we shall get some extra-information on $T(0,x)$.  
Note that in view of the definition of the $\tilde a_j$ \eqref{tildea}-\eqref{tildeaa} we deduce that for $f_+$ small enough in $H^1$, 
$$\left\|T(0)\right\|_{L^\infty}\leq \sum_{j=1}^\infty C^{2j}\|f_+\|_{H^1}^{2j}<\infty,\quad \left\|T_x(0)\right\|_{L^1}+\left\|T_x(0)\right\|_{L^2}\leq \|f_+\|_{H^1}\sum_{j=1}^\infty C^{2j}\|f_+\|_{H^1}^{2j}<\infty.$$
We get also that for fixed $x>0$ the complex vector $\tilde N(t,x)$ has a limit as $t=0$ in the following way. From Proposition \ref{propas} we have
$$\tilde N(t,x)-N^\infty-i\int_x^{\infty} \overline{h(t,s)}\,T(t,s)\,ds=\mathcal O(\sqrt {t}).$$
We have proved above that $T(t,x)=T(0,x)+\mathcal O(t^{\frac 16^-})$. Take $\epsilon>0$ and recall the bound \eqref{x>1}. Then there exists $M_\epsilon$ large enough such that for $M\geq M_\epsilon$
$$|\tilde N(t,x)-N^\infty-i\int_x^{M} \overline{h(t,s)}\,T(0,s)\,ds|\leq \mathcal O(t^{\frac 16^-})+\epsilon.$$
Secondly, $T(0)\in L^\infty$  and $T_s(0)\in L^1$, so we can apply formula \eqref{ansatz} with $g(s)=T(0,s)$ to obtain
$$|\tilde N(t,x)-N^\infty-i\int_x^{M} \overline{\tilde h(s)}\,T(0,s)\,ds|\leq \mathcal O(t^{\frac 16^-})+\epsilon.$$
Now, since $\tilde h\in L^1$ and $T(0)\in L^\infty$, by choosing $M$ large enough we get 
$$|\tilde N(t,x)-N^\infty-i\int_x^{\infty} \overline{\tilde h(s)}\,T(0,s)\,ds|\leq \mathcal O(t^{\frac 16^-})+\epsilon.$$
As a conclusion, there exists a limit of $\tilde N(t,x)$ as $t$ tends to $0$ and
\begin{equation}\label{eqN0}
\tilde N(0,x)=N^\infty+i\int_x^\infty \overline{\tilde h(s)} T(0,s)\,ds.
\end{equation}
So in particular $\tilde N(0)\in L^\infty$ and $\tilde N_x(0)\in L^1\cap L^2$, and we can argue similarly for $T$ to get
\begin{equation}\label{eqT00}
T(0,x)=T^\infty-\Im \int_x^\infty\tilde h(s) \tilde N(0,s)\,ds.
\end{equation}
Gathering \eqref{eqN0} and \eqref{eqT00} we obtain an integral equation for $T(0)$ :
\begin{equation}\label{eqT0int}
T(0,x)=T^\infty-\Im \int_x^\infty\tilde h(s) N^\infty\/ds-\Re\int_x^\infty\tilde h(s)\int_s^\infty \overline{\tilde h(s')} T(0,s')\,ds'\,ds.
\end{equation}
Then the last estimate of the Proposition follows as in the proof of Proposition 4.6 in \cite{BV3} by using \eqref{Tasy}-\eqref{Nasy}-\eqref{c0d0}, Lemma \ref{restest} and Lemma \ref{lemmaansatzprev}.

\end{proof}

\subsection{Properties of the trace at time $t=0$}\label{sectT0bis}
From \eqref{eqN0} and \eqref{eqT00} we obtain
$$T_x(0,x)=\Im\left(\tilde h(x)\,\tilde N(0,x)\right),\quad \tilde N_x(0,x)=-i\overline{\tilde h(x)}\,T(0,x).$$
We recall now that
$$\tilde{h}(x)=i\widehat{f_+}\left(\frac x2\right)  \,e^{-ia^2 \log |x|}.$$
As a conclusion we have that $(T(0),\tilde N(0))$ satisfy 
\begin{equation}\label{eqT0}\left\{\begin{array}{c}T_x(0,x)=\Re\left(\widehat{f_+}\left(\frac x2\right)\,e^{-ia^2\log |x|}\,\tilde N(0,x)\right),\\ \,\\
\tilde N_x(0,x)=-\overline{\widehat{f_+}\left(\frac x2\right)\,e^{-ia^2\log |x|}}\,T(0,x).
\end{array}\right.\end{equation}
Moreover, it was shown in \S5 of \cite{BV3}, without using any weight condition, that there exists a rotation $R$ such that
$$RT(0,0^\pm)=A^\pm\quad,\quad R\tilde N(0,0^\pm)=B^\pm.$$
Finally, since
$$f_+=\mathcal F^{-1}\left(g(2\cdot) e^{ia^2\log |2\cdot|}\right),$$ 
it follows that $\tilde h(x)=ig(x)$ so the traces $(T_0,\tilde N_0)$ given in \eqref{systinitial} and $(RT(0),R\tilde N(0))$ coincide, since they are both solutions to \eqref{eqT0} with same initial value. We recall that we have constructed $\chi(t)$ with $\chi(0,0)=\chi_0(0)=0$. Now we have obtained also that $R\partial_s\chi(0,x)=RT(0,x)=T_0(x)$ so we get that $R\chi$ has trace $\chi_0$.
Therefore we have constructed the solution of the statement of Theorem \ref{ivp} for positive times.

\subsection{Continuation through time $t=0$}\label{sectext}
We denote $\chi_0^*(x)=\chi(0,-x)$. Then $$T_0^*(x)=-T(0,-x),$$ and we define $g^*$ and $N^*_0(x)$ given by the system
\begin{equation}\label{systbis}
\left\{\begin{array}{c}T^*_{0,x}(x)=\Re g^*(x)\tilde N_0^*(x),\\ \tilde N^*_{0,x}(x)=-\overline{g^*}(x)T_0^*(x),
\end{array}\right.\end{equation}
with initial data $\rho(A^+,B^+)$ for $x>0$ and $\rho(A^-,B^-)$ for $x<0$. Note that in view of Proposition \ref{contssim}, the initial data for $T^*_0$ makes sense. Also in view of Proposition \ref{contssim}, $\rho A^\pm =R^\mp (-A^\mp)$ and $\rho B^\pm=R^\mp \overline{B^\mp}$ so the solution of \eqref{systbis} can be expressed in terms of the initial one \eqref{systinitial},
$$N^*_0(x)=R^\mp\overline{N_0(-x)},\quad g^*(x)=\overline{g(-x)}.$$
In particular, $g^*$ satisfies the same conditions as $g$, and we can apply Theorem \ref{ivp} for positive times and initial data $\chi_0^*(x)=\chi(0,-x)$. This yields $\chi^*(t,x)$ solution of the binormal flow  for positive times obtained with initial data $\chi(0,-x)$. Then for negative times we shall extend $\chi(t,x)$ by 
$$\chi(t,x)=\chi^*(-t,-x)$$ 
and obtain the solution in Theorem \ref{ivp} on $[-1,1]$.

\subsection{Uniqueness of the solution}
For proving the uniqueness, suppose that there exists another solution $\chi^\star$ of the binormal flow on positive times, such that $\chi^\star(0)=\chi_0$, with the following regularity. We suppose $\chi^\star\in\mathcal C([-1,1],Lip)\cap \mathcal C([-1,1]\backslash \{0\},\mathcal C^4)$ and assume that its filament functions associated at times $\pm1$ are of the type $(a+u^\star(\pm1,x))e^{i\frac{x^2}{4}}$ with $u^\star(\pm1)$ small in $X^\gamma_1\cap H^4$ with respect to $a$ for some $0<\gamma<\frac 14$.  We shall prove that $\chi^\star=\chi$ on positive time (the same argument applied to $\chi(-t,-x)$ and $\chi_0(-x)$ will show the uniqueness for negative times). 

In view of the scattering result from \cite{BV2} we denote $u^\star(t,x)$ to be the solution of \eqref{NLS} with initial data at time $t=1$ the function $u^\star(1)$, and we denote by $f_{+}^\star$ its asymptotic state.  
Since 
\begin{equation}\label{estbin}|\chi^\star_x\land \chi^\star_{xx}(\tau,x)|=|c^\star b^\star(\tau,x)|=\frac1{\sqrt \tau} |(a+\overline{u^\star})(\frac1\tau,\frac x\tau)|\leq \frac{C}{\sqrt \tau},
\end{equation}
we can write the Duhamel formula from time $0$ to a positive time $t$
\begin{equation}\label{Duhbin}
\chi^\star(t,x)=\chi_0(x)+\int_0^t \chi^\star_x\land \chi^\star_{xx}(\tau,x) d\tau.
\end{equation}
Differentiating in $x$ we obtain a formulation for the tangent vector, at $x\neq 0$, 
\begin{equation}\label{DuhbinT}
T^\star(t,x)=T_0(x)+\partial_x\int_0^t \chi^\star_x\land \chi^\star_{xx} (\tau,x)d\tau.
\end{equation}

We define $g^\star$ by $f_{+}^\star=\mathcal F^{-1}\left(g^\star(2\cdot)e^{ia^2\log |2\cdot|}\right).$ From \cite{BV3}, together with the computations in the previous subsections \S 3.1-\S 3.3 for avoiding weighted conditions on the data, we have that the tangent vector $T^\star(t,x)$ has a limit $T^\star(0,x)$ as $t$ goes to zero, for $x\neq 0$. We also get the  existence of a $\mathbb C^3$-valued function $\tilde N^\star$ orthonormal on $T^\star(0)$ such that for $x>0$ 
\begin{equation}\label{systinitialbis}
\left\{\begin{array}{c}T_{x}^\star(0,x)=\Re (g^\star(x)\tilde N^\star(x)),\\ \tilde N_{x}^\star(x)=-\overline{g^\star}(x)T^\star(0,x),
\end{array}\right.\end{equation}
with initial data $(A^+,B^+)$ (and similar for $x<0$). Since $T^\star$ is a solution of the Schr\"odinger map \eqref{schmap} we can write for $0<\tilde t<t$,
$$T^\star(t,x)=T^\star(\tilde t,x)+\int_{\tilde t}^t T^\star\land T^\star_{xx}(\tau,x) d\tau=T^\star(\tilde t,x)+\partial_x\int_{\tilde t}^t \chi^\star_x\land \chi^\star_{xx}(\tau,x) d\tau.$$
Let $\phi$ be a compactly supported test function away from $x=0$, such that $\phi'\in L^1$. Then by integrating by parts,
$$\int T^\star(t,x)\phi(x)\,dx=\int T^\star(\tilde t,x)\phi(x)\,dx-\int\int_{\tilde t}^t \chi^\star_x\land \chi^\star_{xx}(\tau,x) d\tau\phi'(x)\,dx.$$
Then, in view of \eqref{DuhbinT} we obtain
$$\int (T_0(x)-T^\star(0,x))\phi(x)dx=\int(T^\star (\tilde t,x)-T^\star (0,x))\phi(x)\,dx+\int \int_0^{\tilde t} \chi^\star_x\land \chi^\star_{xx}(\tau,x) d\tau\,\phi'(x) dx.$$
Using Proposition \ref{proptanglim} and \eqref{estbin} we obtain
$$\int (T_0(x)-T^\star(0,x))\phi(x)dx=0.$$
Since $\partial_xT^\star(0,x)\in L^1\cap L^2$ we obtain that $T^\star(0,x)$ is continuous for $x\neq 0$. The same is valid for $T_0(x)$. 
By taking $\phi$ approximating the Dirac distribution located at $x\neq 0$ we obtain that $ T^\star(0,x)=T_0(x)$. 
Therefore using \eqref{systinitialbis} and \eqref{systinitial} we get
$$(\Re\tilde N_{0}-\Re N^\star)(x)=\int_0^x-(\Re g-\Re g^\star)(y)T_0(y)=-\int_0^x\langle T_{0x},(\Re\tilde N_{0}-\Re N^\star)\rangle T_0(y)dy$$
so by using again \eqref{systinitial} and Cauchy-Schwarz inequality
$$|(\Re\tilde N_{0}-\Re N^\star)(x)|\leq C\int_0^x |g(y)||(\Re\tilde N_{0}-\Re N^\star)(y)|dy\leq C\|f_+\|_{L^2}\left(\int_0^x |(\Re\tilde N_{0}-\Re N^\star)(y)|^2dy\right)^\frac 12.$$
We conclude by Gronwall that $\Re\tilde N_0=\Re N^\star$. Similary we obtain $\Im\tilde N_0=\Im N^\star$, so $\tilde N_0=N^\star$. Therefore we get $g=g^\star,$ and implicitly $f_+=f_{+}^\star$. From \cite{BV2} we have uniqueness of the wave operators, so $u(t,x)=u^\star(t,x)$. Therefore curvature and torsion are the same for $\chi^\star$ and $\chi$, so $\chi^\star$ and $\chi$ are the same modulo one rotation (due to the choice of an initial data for the Frenet frame when integrating the Frenet system to obtain the Frenet frame) and one translation (due to the choice of the location of the curve when integrating the binormal flow to obtain the binormal solution). Since the initial data $\chi^\star(0)$ and $\chi(0)$ coincide as oriented curves, it follows that $\chi^\star$ and $\chi$ coincide.

\subsection{Properties of the solution}

Recall that by using the Frenet system, the binormal flow \eqref{binormal} can be written as
$$\chi_t=c\,b.$$
The estimate \eqref{convbin} in part i) of Theorem \ref{ivp} follows from the fact that the solution $u(t)$ is small, so the curvature is close to the one of the sefsimilar solutions $c_a(t',x)=\frac{a}{\sqrt {t'}}$,
$$|\chi(0,x)-\chi_0(x)|\leq\left|\int_0^t \chi_{t'}(t',x)\,dt'\right|=\left|\int_0^t c(t',x)\,b(t',x)\,dt'\right|\leq\int_0^t c(t',x)\,dt'\leq C\sqrt{t}.$$

To prove ii) we shall use the notations in \S 2 of \cite{BV3} on parallel frames:
$$\chi_t=T\land T_x=T\land(\alpha e_1+\beta e_2)=\alpha e_2-\beta e_1=\Im(\overline{\psi} N),$$
so integrating between two fixed times $0<t_2\leq t_1\leq 1$ we obtain
$$\chi(t_1,x)-\chi(t_2,x)=\int_{t_2}^{t_1}\Im(\overline{\psi} N)(t,x)dt=\Im\int_{t_2}^{t_1}\frac{e^{-i\frac{x^2}{4t}}}{\sqrt t}(a+u)\left(\frac 1t,\frac xt\right) N(t,x)dt.$$
We perform an integration by parts from the oscillating phase,
$$\chi(t_1,x)-\chi(t_2,x)=\Im\left[\frac{4t^2}{ix^2}\frac{e^{-i\frac{x^2}{4t}}}{\sqrt t}(a+u)\left(\frac 1t,\frac xt\right) N(t,x)\right]_{t_2}^{t_1}$$
$$+\frac{1}{x^2}\,\Re\int_{t_2}^{t_1}e^{-i\frac{x^2}{4t}}\partial_t\left(4t\sqrt t(a+u)\left(\frac 1t,\frac xt\right) N(t,x)\right)dt.$$
Since $N_t=i\psi_xT-\frac{a^2-t|\psi|^2}{2t}N$ and $u$ solves \eqref{NLS} it follows that
$$|\chi(t_1,x)-\chi(t_2,x)|\leq C\frac{t_1\sqrt{t_1}}{x^2}(a+\|u(1/t)\|_{L^\infty([t_2,t_1],L^\infty)})+C\frac{\sqrt{t_1}}{x^2}\|\partial_x^2u(1/t)\|_{L^\infty([t_2,t_1],L^\infty)}$$
$$+C(a)\frac{1}{x^2\sqrt{t_1}}(\|u(1/t)\|_{L^\infty([t_2,t_1],L^\infty)}+\|u(1/t)\|_{L^\infty([t_2,t_1],L^\infty)}^3)+C\frac{\sqrt{t_1}}{x}\|\partial_xu(1/t)\|_{L^\infty([t_2,t_1],L^\infty)}$$
$$+C\frac{t_1}{x}(a+\|u(1/t)\|_{L^\infty([t_2,t_1],L^\infty)})^2+C\frac{t_1}{x}(a+\|u(1/t)\|_{L^\infty([t_2,t_1],L^\infty)})\|\partial_xu(1/t)\|_{L^\infty([t_2,t_1],L^\infty)}$$
$$+C(a)\frac{t_1\sqrt{t_1}}{x^2}(a+\|u(1/t)\|_{L^\infty([t_2,t_1],L^\infty)})(\|u(1/t)\|_{L^\infty([t_2,t_1],L^\infty)}+\|u(1/t)\|_{L^\infty([t_2,t_1],L^\infty)}^2).$$
In view of \eqref{unifbounds} the first asymptotic behaviour \eqref{convinfinity} in ii) follows. The second asymptotic behaviour in ii)  was proved in \S 3.2 of \cite{BV3}, and \eqref{convinfinityT} was displayed in Proposition \ref{propas}.

Let us prove (iii). On one hand, as done above for proving \eqref{convbin} we get
$$\left|\int_{-1}^1\int\chi_t(t,x)\phi(t,x)\,dx dt\right|\leq C\|\phi\|_{L^\infty L^1}\int_{-1}^1\frac{1}{\sqrt {|t|}}\,dt<\infty.$$
On the other hand $\chi$ is a strong solution of \eqref{binormal} on $[-1,0[\cap]0,1]$, so \eqref{binweak} of iii) follows also. 

Estimate \eqref{convT} comes from Proposition \ref{proptanglim}, and the fact that $\partial_xT(0)\in L^1\cap L^2$ was proved at the beginning of \S \ref{sectT0bis}, so the assertions in iv) are proved.

Finally, v) follows immediately from  \eqref{binweak}. The proof of Theorem \ref{ivp} is complete.

\section{Proof of Theorem \ref{cont}}\label{sectcont}
Concerning Theorem \ref{cont} we recall that its part concerning positive times $t\geq 0$ was the main result in \cite{BV3}, under the assumption that weighted conditions are satisfied by $u(1)$. In the proof of Theorem \ref{ivp} we have removed these conditions, provided that the initial data (or the asymptotic state) are small in spaces of type $\partial_x^kf\in X^\gamma$ for $0\leq k\leq 4$ and some $\gamma<\frac 14$. For extending $\chi$ to negative times, we proceed as explained above in \S \ref{sectext}.

\section{Appendix: Analysis in weighted spaces of the NLS equation}\label{appendix}
We recall that the results in this paper could have been obtained easily from \cite{BV2}, provided that if the initial state $u(1)$ of equation \eqref{NLS} is in weighted spaces implies its asymptotc state $f_+$ belongs also to weighted spaces and reciprocally. 
We shall present here a detailed analysis of the solutions of the linear part of \eqref{NLS} whose data belong to weighted spaces. As a conclusion, weighted spaces are not an appropriate setting for scattering theory of equation \eqref{NLS}, that actually is in contrast with the case of the classical linear Schr\"odinger equation. \\

We sset $\omega(t)=S(t,t_0)\omega(t_0)$ to be the solution of
\begin{equation}\label{lin} i\omega_t+\omega_{xx}+\frac{a^2}{2t}(\omega+\overline \omega)=0,
\end{equation}
with initial data $\omega(t_0)$ at time $t_0$. We set 
then $v(t)=J(t)\omega(t)=(x+2it\nabla)\omega(t)$, which satisfies
\begin{equation}\label{linJ}iv_t+v_{xx}+\frac{a^2}{2t}(v+\overline v)=\frac{a^2}{2t}(\overline{J\omega}-J\overline \omega)=-2ia^2\overline \omega_x,\end{equation}
with initial data $v(t_0)=J(t_0)\omega(t_0)$ at time $t_0$. 
Taking the real and imaginary parts, and then passing in Fourier variables yields
\begin{equation}\label{remodes}
\partial_t\,\,\widehat{\Re v}(t,\xi)=\xi^2\,\widehat{\Im v}(t,\xi)-2ia^2\xi\,\widehat{\Re \omega}(t,\xi),
\end{equation}
\begin{equation}\label{immodes}
\partial_t\,\,\widehat{\Im v}(t,\xi)=-\xi^2\,\widehat{\Re v}(t,\xi)+ \frac {a^2}{t}\,\widehat{\Re v}(t,\xi)+2ia^2\xi\,\widehat{\Im \omega}(t,\xi).
\end{equation}

In the rest of this Appendix we shall try to  understand how the $L^2$ norm of $v(t)=J(t)\omega(t)$ grows in time. First, recall that the classical linear Schr\"odinger equation commutes with $J(t)$.  We have the following result.

\begin{prop}\label{linJdescription}For $\xi\neq 0$ the evolution of $J(t)\omega(t)$ is described by 
$$J(t)\omega(t)=S(t,1)J(1)\omega(1)+S(t,1)\mathcal F^{-1}\left(\frac {2a^2}\xi\widehat{\Re\omega}(1,\xi)\right)-\mathcal F^{-1}\left(\frac {2a^2}\xi\widehat{\Re\omega}(t,\xi)\right).$$
\end{prop}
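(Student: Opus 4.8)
The plan is to recognize \eqref{linJ} as the \emph{same} linear equation \eqref{lin} solved by $\omega$, but carrying the source $-2ia^2\overline\omega_x$, and to manufacture a companion function whose source is exactly the opposite, so that the two sources cancel in the sum. Concretely, I would introduce
$$w(t)=\mathcal F^{-1}\left(\frac{2a^2}{\xi}\,\widehat{\Re\omega}(t,\xi)\right),$$
well defined for $\xi\neq 0$, and claim that $w$ solves $iw_t+w_{xx}+\frac{a^2}{2t}(w+\bar w)=+2ia^2\overline\omega_x$. Granting this, the function $z:=v+w=J(t)\omega(t)+w(t)$ solves the \emph{homogeneous} equation $iz_t+z_{xx}+\frac{a^2}{2t}(z+\bar z)=0$, since the two sources cancel. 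By definition of the propagator this forces $z(t)=S(t,1)z(1)$, and using the $\mathbb R$-linearity of $S$ together with $v(1)=J(1)\omega(1)$ gives
$$J(t)\omega(t)=v(t)=S(t,1)\big(J(1)\omega(1)+w(1)\big)-w(t),$$
which, upon recalling $w(1)=\mathcal F^{-1}\big(\tfrac{2a^2}{\xi}\widehat{\Re\omega}(1,\xi)\big)$ and $w(t)=\mathcal F^{-1}\big(\tfrac{2a^2}{\xi}\widehat{\Re\omega}(t,\xi)\big)$, is precisely the asserted identity.

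The heart of the matter is verifying that $w$ carries the source $+2ia^2\overline\omega_x$, and this I would do entirely on the Fourier side. Splitting \eqref{lin} into real and imaginary parts with $p=\Re\omega$, $q=\Im\omega$ gives $p_t=-q_{xx}$ and $q_t=p_{xx}+\frac{a^2}{t}p$, hence in frequency $\partial_t\widehat{\Re\omega}=\xi^2\widehat{\Im\omega}$. Using that $\Re\omega$ is real, so $\overline{\widehat{\Re\omega}(-\xi)}=\widehat{\Re\omega}(\xi)$, one finds $\widehat w(\xi)+\overline{\widehat w(-\xi)}=\tfrac{2a^2}{\xi}\widehat{\Re\omega}(\xi)-\tfrac{2a^2}{\xi}\widehat{\Re\omega}(\xi)=0$; thus $\Re w=0$ and the whole term $\frac{a^2}{2t}(w+\bar w)$ drops out. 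It then remains to check the scalar identity $i\widehat w_t-\xi^2\widehat w=\widehat{2ia^2\overline\omega_x}$. Substituting $\widehat w_t=\tfrac{2a^2}{\xi}\partial_t\widehat{\Re\omega}=2a^2\xi\,\widehat{\Im\omega}$ and $-\xi^2\widehat w=-2a^2\xi\,\widehat{\Re\omega}$ reduces the left side to $2a^2\xi\big(i\widehat{\Im\omega}-\widehat{\Re\omega}\big)$, while $\widehat{2ia^2\overline\omega_x}=2ia^2(i\xi)\overline{\hat\omega(-\xi)}=-2a^2\xi\,\overline{\hat\omega(-\xi)}$; the two agree because $i\widehat{\Im\omega}-\widehat{\Re\omega}=-\overline{\hat\omega(-\xi)}$, a one-line consequence of $\widehat{\Re\omega}(\xi)=\tfrac12\big(\hat\omega(\xi)+\overline{\hat\omega(-\xi)}\big)$ and $\widehat{\Im\omega}(\xi)=\tfrac1{2i}\big(\hat\omega(\xi)-\overline{\hat\omega(-\xi)}\big)$.

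The restriction $\xi\neq 0$ in the statement is exactly what is needed to make $w$ well defined, and the whole identity should be read pointwise in frequency away from the origin; the $1/\xi$ singularity at low frequencies is the structural reason why $J(t)\omega(t)$ cannot be expected to remain in $L^2$, which is the point the Appendix is driving at. The main obstacle I anticipate is purely bookkeeping: keeping the complex conjugations and the $\xi\mapsto-\xi$ reflections consistent throughout, since $\omega\mapsto\omega+\bar\omega$ is only $\mathbb R$-linear and therefore couples the modes at $\xi$ and $-\xi$. Once the sign of the source carried by $w$ is pinned down correctly, the cancellation in $z=v+w$ and the propagation by $S(t,1)$ are immediate, and no further estimates are required.
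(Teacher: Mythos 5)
Your proof is correct, and it reaches the conclusion by a genuinely different, more direct mechanism than the paper's. You exhibit the corrector $w(t)=\mathcal F^{-1}\bigl(\tfrac{2a^2}{\xi}\,\widehat{\Re\omega}(t,\xi)\bigr)$, observe that $\Re w=0$ (so the nonlocal term $\tfrac{a^2}{2t}(w+\bar w)$ drops), and verify on the Fourier side, using $\partial_t\widehat{\Re\omega}=\xi^2\widehat{\Im\omega}$, that $w$ carries exactly the opposite source $+2ia^2\overline\omega_x$; hence $z=J(t)\omega(t)+w(t)$ solves the homogeneous equation \eqref{lin}, is by definition an $S(t,1)$-evolution, and additivity of $S$ unwinds to the stated identity. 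All of your intermediate computations check out (in particular the identity $i\widehat{\Im\omega}-\widehat{\Re\omega}=-\overline{\hat\omega(-\xi)}$ and the sign of the source). The paper identifies the same corrected quantity, since \eqref{writtingv} says precisely that $\hat z=\hat v+\tfrac{2a^2}{\xi}\widehat{\Re\omega}=\widehat{\Re v}+i\partial_t\widehat{\Re v}/\xi^2$, but it proves the propagation statement differently: it derives the source-free second-order ODE \eqref{secondeq} for $\widehat{\Re v}$ (the sources cancel upon differentiating \eqref{remodes} and using \eqref{immodes}), and then reconstructs the $S(t,1)$-evolution from the observation that $f+i\partial_t f/\xi^2$ satisfies \eqref{linF} whenever $f$ solves \eqref{secondeq} with conjugation-symmetric data \eqref{condid}, a symmetry that is propagated in time. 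Your route replaces this second-order reduction and symmetry-propagation argument by a one-step cancellation of sources, which is shorter and equally rigorous; what the paper's route buys is the explicit oscillator structure $\partial_t^2 f=\xi^2(-\xi^2+a^2/t)f$, which mirrors the $(Y_2,Z_2)$ analysis of \cite{BV2} that the Appendix relies on in Propositions \ref{linJ} and \ref{linJasymptotics}. Both arguments share the same caveat, which you correctly flag: the identity is to be read pointwise in $\xi\neq 0$, the $1/\xi$ singularity being precisely the obstruction to $J(t)\omega(t)$ staying in $L^2$.
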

\begin{proof}
From \eqref{remodes} we deduce that we can write
\begin{equation}\label{writtingv}
\hat v=\widehat{\Re v}+i\frac{\partial_t\widehat{\Re v}}{\xi^2}-\frac{2a^2}{\xi}\widehat{\Re\omega}.
\end{equation}
Let us recall that since $\omega$ solves \eqref{lin}, it follows that
$$\partial_t\,\,\widehat{\Re \omega}(t,\xi)=\xi^2\,\widehat{\Im \omega}(t,\xi).$$
Gathering this with \eqref{remodes} and \eqref{immodes} we obtain that $\widehat{\Re v}(t,\xi)$ satisfies the the second order equation
\begin{equation}\label{secondeq}
\partial_t^2f(t,\xi)=\xi^2\left(-\xi^2+\frac {a^2}{t}\right)f(t,\xi),
\end{equation}
with initial data 
\begin{equation}\label{idv}f(1,\xi)=\widehat{\Re v(1)}(\xi),\quad \partial_t f(1,\xi)=\xi^2\widehat{\Im v(1)}(\xi)-2ia^2\xi\widehat{\Re\omega}(1,\xi).\end{equation}

Now we notice that if $f(t,\xi)$ is a solution of \eqref{secondeq} with an initial data that satisfy
\begin{equation}\label{condid}
f(1,\xi)=\overline{f(1,-\xi)},\quad \partial_tf(1,\xi)=\overline{\partial_t f(1,-\xi)},
\end{equation}
then this property is conserved in time, and we have the identity
$$f(t,\xi)+i\frac{\partial_t f(t,\xi)}{\xi^2}=\mathcal{F} S(t,1)\mathcal{F}^{-1}\left(f(1,\xi)+i\frac{\partial_t f(1,\xi)}{\xi^2}\right).$$
This is valid since both functions satisfy the equation of the Fourier transform of a $S(t,1)$ evolution (see \eqref{lin}):
\begin{equation}\label{linF}
iU_t(t,\xi)-\xi^2U(t,\xi)+\frac{a^2}{2t}(U(t,\xi)+\overline{U(t,-\xi)})=0,
\end{equation}
with the same initial data. 

Finally, since the initial condition \eqref{idv} satisfy the condition \eqref{condid} it follows that
$$\widehat{\Re v}+i\frac{\partial_t\widehat{\Re v}}{\xi^2}=\mathcal{F} S(t,1)\mathcal{F}^{-1}\left(\widehat{v(1)}(\xi)+\frac{2a^2}{\xi}\widehat{\Re\omega}(1,\xi)\right),$$
so in view of \eqref{writtingv} the proof is complete.\\
\end{proof}

So first we shall see in the next Proposition that imposing vanishing condition at the zero-modes on the asymptotic state we obtain that the solution belongs to weighted spaces. Let us recall the asymptotic results obtained  in \S 2.1 of \cite{BV2} for $\hat \omega$ with $\omega$ the solution of \eqref{lin}. For $4a^2\leq t\xi^2$ we denoted
\begin{equation}\label{descromega}\left(\begin{array}{c}\widehat{\Re w}\\ \widehat{\Im w}\end{array}\right)(t,\xi)=\left(\begin{array}{c}Y\\ Z\end{array}\right)(t\xi^2,\xi)
=\left(\begin{array}{cc} e^{i\Psi(t\xi^2)} &e^{-i\Psi(t\xi^2)}\\i\alpha(t\xi^2)e^{i\Psi(t\xi^2)}&-i\alpha(t\xi^2)e^{-i\Psi(t\xi^2)}\end{array}\right)\left(\begin{array}{c} Y_2\\ Z_2\end{array}\right)(t\xi^2,\xi),\end{equation}
with
$$\alpha(\tau)=\sqrt{1-\frac{a^2}{\tau}}\quad,\quad \Psi(\tau)=\tau-\frac{a^2}{2}\log \tau-\int_\tau^\infty \alpha(s)-1+\frac{a^2}{2s}\,ds.$$
We proved that for $\tau\leq 4a^2$
\begin{equation}\label{systyz}\partial_\tau\left(\begin{array}{c} Y_2\\ Z_2\end{array}\right)(\tau,\xi)=M(\tau)\left(\begin{array}{c} Y_2\\ Z_2\end{array}\right)(\tau,\xi),\end{equation}
with
$$M(\tau)=\frac{a^2}{4\tau^2\alpha^2}
\left(\begin{array}{cc} -1 & e^{-2i\Psi(\tau)}\\e^{2i\Psi(\tau)} & -1\end{array}\right),$$
and that $(Y_2,Z_2)(\tau,\xi)$ has a limit $(Y^+,Z^+)(\xi)$ as $\tau$ goes to infinity. Moreover, we obtained
$$\lim_{t\rightarrow\infty}\left(\widehat{\omega(t)}(\xi)-2e^{-it\xi^2}e^{i\frac{a^2}{2}\log t}\widehat{u_+}(\xi)\right)=0,$$
with
\begin{equation}
\label{z^+}
Z^+(\xi)=\frac 12\,e^{-ia^2\log|\xi|}\hat{u_+}(\xi),\quad Y^+(\xi)=\frac 12\,e^{ia^2\log|\xi|}\overline{\hat{u_+}}(-\xi).
\end{equation}
For $J(t)\omega(t)$ we shall use all this information in order to prove the following result.

\begin{prop}\label{linJ}
The solution $\omega$ of the linear equation \eqref{lin} satisfies for $t\xi^2\geq 2a^2$ that
$$|\widehat {J(t)\omega(t)}(\xi)|\leq C(a)\frac 1\xi(|\hat u_+(\xi)|+|\hat u_+(-\xi)|)$$
 and for  $t\xi^2\leq 2a^2$ that
$$|\widehat {J(t)\omega(t)}(\xi)|\leq C(a)\frac 1\xi(|\hat u_+(\xi)|+|\hat u_+(-\xi)|)+C(a,\delta)\frac{1}{\xi^{1+\delta}}(|\hat u_+(\xi)|+|\hat u_+(-\xi)|).$$
Therefore
$$\|J(t)\omega(t)\|_{L^2}\leq C(a)\|u_+\|_{\dot H^{-1}}+C(a,\delta)\|u_+\|_{\dot H^{-1-\delta}}.$$
\end{prop}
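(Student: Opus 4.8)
The plan is to reduce everything to the scalar analysis already set up in the proof of Proposition \ref{linJdescription} and to the asymptotic description of $\hat\omega$ recalled in \eqref{descromega}--\eqref{z^+}. First I would start from the identity \eqref{writtingv}, which I rewrite as $\hat v = W - \frac{2a^2}{\xi}\widehat{\Re\omega}$, where $W = \widehat{\Re v} + i\,\partial_t\widehat{\Re v}/\xi^2$. The second piece is explicit: since $\widehat{\Re\omega}(t,\xi) = Y(t\xi^2,\xi)$ with $|Y| \leq |Y_2| + |Z_2|$ and $(Y_2,Z_2)$ converges to $(Y^+,Z^+)$, the relations \eqref{z^+} give $|\widehat{\Re\omega}(t,\xi)| \leq C(a)(|\hat u_+(\xi)| + |\hat u_+(-\xi)|)$ in the regime $t\xi^2 \geq 2a^2$, so $\frac{2a^2}{\xi}\widehat{\Re\omega}$ already carries the desired $\frac1\xi$-weight. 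The crux is therefore to control $W$, and the key structural observation is that $W$ solves the same equation \eqref{linF} as $\hat\omega$ itself (this is exactly what was used in Proposition \ref{linJdescription}); equivalently $\widehat{\Re v}$ solves the scalar second order equation \eqref{secondeq} with data \eqref{idv}. Hence $W$ admits the very same asymptotic description \eqref{descromega}, with its own limiting states, and the whole problem becomes the study of one scalar ODE with a parameter.

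In the non-resonant regime $t\xi^2 \geq 2a^2$ I would treat \eqref{secondeq} by a WKB/oscillatory argument: there $-\xi^2 + a^2/t < 0$, so $\widehat{\Re v}$ oscillates with frequency of order $\xi^2$ and amplitude obtained by dividing the initial velocity in \eqref{idv} by $\xi^2$. Since that velocity equals $\xi^2\widehat{\Im v(1)} - 2ia^2\xi\widehat{\Re\omega}(1,\xi)$, the division by $\xi^2$ is precisely what produces the factor $\frac1\xi$, while the surviving amplitude is measured by $\widehat{\Re\omega}(1,\xi)$, hence by $u_+$ through \eqref{z^+}; crucially no derivative of $\hat u_+$ survives. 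Combined with the first paragraph this yields the bound $|\hat v(t,\xi)| \leq C(a)\frac1\xi(|\hat u_+(\xi)| + |\hat u_+(-\xi)|)$ claimed for $t\xi^2 \geq 2a^2$, with uniformity in $t$ coming from the boundedness of $(Y_2,Z_2)$ once $t\xi^2 \geq 2a^2$ and the conservation of the WKB amplitude along the oscillation.

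The delicate regime, and the main obstacle, is the low-frequency/small-time range $t\xi^2 \leq 2a^2$, where $\alpha^2 = 1 - a^2/(t\xi^2)$ changes sign: this is a turning point of \eqref{secondeq}, the WKB approximation breaks down, and the solution is no longer merely oscillatory but may grow. Here I would abandon \eqref{secondeq} and instead propagate the system \eqref{systyz} for $(Y_2,Z_2)$, whose matrix $M(\tau)$ behaves like $\frac{a^2}{4\tau^2\alpha^2}$ and is genuinely singular as $\tau = t\xi^2 \to 0$. Integrating this system produces at worst a power growth in $1/\xi$, and by the remark following \eqref{Ytau} the exponent $a^2$ may be replaced by an arbitrary $\delta > 0$; this is exactly the source of the extra factor $\frac1{\xi^\delta}$ and hence of the bound $|\hat v(t,\xi)| \leq C(a)\frac1\xi(\cdots) + C(a,\delta)\frac1{\xi^{1+\delta}}(\cdots)$ for $t\xi^2 \leq 2a^2$. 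Finally, for the $L^2$ statement I would square the two pointwise bounds, integrate in $\xi$, use the symmetry $\xi \mapsto -\xi$ to absorb the $|\hat u_+(-\xi)|$ terms, and recognize $\int |\hat u_+(\xi)|^2\,d\xi/\xi^2 = \|u_+\|_{\dot H^{-1}}^2$ on $t\xi^2 \geq 2a^2$ together with $\int |\hat u_+(\xi)|^2\,d\xi/\xi^{2+2\delta} = \|u_+\|_{\dot H^{-1-\delta}}^2$ on $t\xi^2 \leq 2a^2$, which gives the asserted estimate. The quantitative control of the growth of $(Y_2,Z_2)$ across the turning point, i.e. pinning the loss to exactly the exponent $\delta$, is the step I expect to require the most care.
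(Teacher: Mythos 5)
You have the right skeleton --- split at $t\xi^2=2a^2$, prove a $\xi^{-1}$ bound in the non-resonant region, accept an extra $\xi^{-\delta}$ loss in the resonant one, then integrate in $\xi$ --- and your treatment of the term $\frac{2a^2}{\xi}\widehat{\Re\omega}$ and your final $L^2$ step are fine. But both of your regime arguments have genuine gaps. In the region $t\xi^2\geq 2a^2$ your WKB argument is anchored at the data \eqref{idv} at time $t=1$, and this is circular: the WKB amplitude of a solution of \eqref{secondeq} is of size $|f(1,\xi)|+|\partial_t f(1,\xi)|/\xi^2$, which contains $|\widehat{\Re v}(1,\xi)|$ and $|\widehat{\Im v}(1,\xi)|$, i.e.\ precisely $|\widehat{J(1)\omega(1)}(\xi)|$ --- the very quantity the proposition is supposed to bound in terms of the asymptotic state. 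Your sentence ``the surviving amplitude is measured by $\widehat{\Re\omega}(1,\xi)$'' simply discards these terms; nothing bounds $\hat v(1,\xi)$ by $\xi^{-1}|\hat u_+(\pm\xi)|$ a priori (and for small $\xi$ the time $t=1$ lies inside the turning-point region, so a WKB expansion cannot even be launched there). Since the estimate is relative to $u_+$, the propagation must be anchored at $\tau=t\xi^2=\infty$: the paper applies $i\partial_\xi-2t\xi$ directly to the representation \eqref{descromega}, uses \eqref{formulas} together with the system \eqref{systyz} to get the exact identity \eqref{link} --- the key point being that all terms of size $t\xi$ cancel against the $\xi$-derivative of the phase $\Psi(t\xi^2)$ --- and then bounds $Y_2,Z_2,\partial_\xi Y_2,\partial_\xi Z_2$ by integrating \eqref{systyz} down from $\tau=\infty$, where they match $(Y^+,Z^+)$ and hence $\hat u_+$ via \eqref{z^+}. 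Note also that your claim that ``no derivative of $\hat u_+$ survives'' is not innocent: both \eqref{link} and Proposition \ref{linJasymptotics} show that $\partial_\xi Z^+$, hence $\partial_\xi\hat u_+$, genuinely enters $\widehat{J(t)\omega(t)}$, and disposing of that contribution is an actual step, not a non-issue.

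In the region $t\xi^2\leq 2a^2$ your plan to ``propagate the system \eqref{systyz}'' cannot work: that system is the diagonalized (WKB) form of the equation, meaningful only where $\alpha(\tau)=\sqrt{1-a^2/\tau}$ is real and bounded away from zero, and its matrix has entries of size $\frac{a^2}{4\tau^2\alpha^2}=\frac{a^2}{4\tau(\tau-a^2)}$, with a non-integrable singularity at the turning point $\tau=a^2$, which sits inside the region you want to traverse; integrating it across that point gives no bound at all, let alone a loss pinned at exactly $\xi^{-\delta}$. The appeal to the remark following \eqref{Ytau} (Lemma 6.1 of \cite{BV3}) is also misplaced: that remark concerns the exponent in the definition of the $Y^\gamma$ norm for the nonlinear problem, not the growth of this ODE system. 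What the paper actually does in this region is run energy estimates, as in Proposition A.1 of \cite{BV2}, on the equation satisfied by $v=J\omega$ itself --- equation \eqref{lin} with source $-2ia^2\overline{\omega}_x$, whose Fourier form couples $\hat v(t,\xi)$ with $\hat v(t,-\xi)$ --- connecting the time $t$ to the matching time $2a^2/\xi^2$; this produces the factor $(2a^2/\xi^2)^{\frac12+\delta_1+\delta_2}t^{-\delta_1}$ against $|\hat\omega(2a^2/\xi^2,\pm\xi)|$ and, combined with the bound \eqref{diagest} at the matching time and Remark A.2 of \cite{BV2}, yields the $\xi^{-1-\delta}$ term. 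Both regime estimates therefore need to be redone along these lines.
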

\begin{proof}
Following the first lines of the proof of Proposition A.1 in \cite{BV2} (using the notations therein), the same way we have obtained for $4a^2\leq \tau$
$$|Y_2(\tau,\xi)|^2+|Z_2(\tau,\xi)|^2\leq C(|\hat u_+(\xi)|^2+|\hat u_+(-\xi)|^2)$$
we also obtain
$$|\partial_\xi Y_2(\tau,\xi)|^2+|\partial_\xi Z_2(\tau,\xi)|^2\leq C(|\partial_\xi(e^{-ia^2\log\xi^2}\hat u_+(\xi))|^2+|\partial_\xi(e^{-ia^2\log\xi^2}\hat u_+(-\xi))|^2).$$
For $4a^2\leq t\xi^2$ we use \eqref{descromega} to get
$$\widehat{J(t)w(t)}(\xi)=(i\partial_\xi-2t\xi)\hat{w}(t,\xi)=(i\partial_\xi-2t\xi)\left (\left(1-\alpha\right)e^{i\Phi}Y_2+\left(1+\alpha\right)e^{-i\Phi}Z_2\right)(t\xi^2,\xi).$$
For any function $f$ we compute by using $\partial_\tau\Phi=\alpha$ and $\partial_\tau \alpha=\frac{a^2}{2\alpha\tau^2},$
\begin{equation}\label{formulas}(i\partial_\xi-2t\xi)\left(\left(1-\alpha(t\xi^2)\right)e^{i\Psi(t\xi^2)}f(\xi)\right)=i(1-\alpha)e^{i\Psi}\partial_\xi f-\frac{2a^2}{\xi}e^{i\Psi}f-\frac{ia^2}{\alpha t\xi^3}e^{i\Psi}f,\end{equation}
$$(i\partial_\xi-2t\xi)\left(\left(1+\alpha(t\xi^2)\right)e^{-i\Psi(t\xi^2)}f(\xi)\right)=i(1+\alpha)e^{-i\Psi}\partial_\xi f-\frac{2a^2}{\xi}e^{-i\Psi}f+\frac{ia^2}{\alpha t\xi^3}e^{-i\Psi}f.$$
In particular
$$\widehat{J(t)w(t)}(\xi)=i\left(1-\alpha\right) e^{i\Psi}\partial_2Y_2+i2t\xi\left(1-\alpha\right) e^{i\Psi}(M_{11}Y_2+M_{12}Z_2)-\frac{2a^2}{\xi}e^{i\Psi}Y_2-\frac{ia^2}{\alpha t\xi^3}e^{i\Psi}Y_2$$
$$+i\left(1+\alpha\right) e^{-i\Psi}\partial_2Z_2+i2t\xi\left(1+\alpha\right) e^{-i\Psi}(M_{21}Y_2+M_{22}Z_2)-\frac{2a^2}{\xi}e^{-i\Psi}Z_2+\frac{ia^2}{\alpha t\xi^3}e^{-i\Psi}Z_2.$$
By using the exact expression of $M$ we obtain that
\begin{equation}\label{link}
\widehat{J(t)w(t)}(\xi)=-\frac{2a^2}{\xi}\left(e^{i\Psi}Y_2+e^{-i\Psi}Z_2\right)+i\left(1-\alpha\right)e^{i\Psi}\partial_2Y_2+i\left(1+\alpha\right)e^{-i\Psi}\partial_2Z_2.
\end{equation}
In conclusion in the present region $2a^2\leq t\xi^2$ we have 
\begin{equation}\label{diagest}\sup_{2a^2\leq t\xi^2} |\hat v(t,\xi)|\leq \frac{C}{\xi}\,(|\hat u_+(\xi)|+|\hat u_+(-\xi)|)\end{equation}
$$+C(|\partial_\xi(e^{-ia^2\log\xi^2}\hat u_+(\xi))|^2+|\partial_\xi(e^{-ia^2\log\xi^2}\hat u_+(-\xi))|^2)$$
$$\leq \frac{C}{\xi}\,(|\hat u_+(\xi)|+|\hat u_+(-\xi)|).$$
so in particular
$$\| \hat v(t)\|_{L^2(\xi^2\geq\frac{2a^2}{t})}\leq C\sqrt{t}\|u_+\|_{L^2}.$$

For the cases $t\xi^2\leq 2a^2$ we can use energy methods like in Proposition A.1 in  \cite{BV2}  to connect \footnote{It is also possible to derive backwards pointwise Fourier estimates in the spirit of \cite{BV3}.}the time $t_1=t$ to the time $t_2=\frac{2a^2}{\xi^2}$ which in turn connects to the asymptotic state via \eqref{diagest}, and it is in here that we encounters the $\dot H^{-1}$ space
$$|\hat v(t,\xi)|\leq C(a) | \hat v(\frac{2a^2}{\xi^2},\xi)|+|\hat v(\frac{2a^2}{\xi^2},-\xi)|.
$$
$$+C(a,\delta_1,\delta_2)\frac{(\frac{2a^2}{\xi^2})^{\frac 12+\delta_1+\delta_2}}{t^{\delta_1}}\left(|\widehat{w}(\frac{2a^2}{\xi^2},\xi)|+|\widehat{w}(\frac{2a^2}{\xi^2},-\xi)|\right).$$
Using \eqref{diagest} and Remark A.2. from \cite{BV2} we conclude that
$$|\hat v(t,\xi)|\leq C(a)\frac 1\xi(|\hat u_+(\xi)|+|\hat u_+(-\xi)|)$$

$$+C(a,\delta)\frac{1}{\xi^{1+\delta}}(|\hat u_+(\xi)|+|\hat u_+(-\xi)|),$$
and the lemma follows.\\
\end{proof}

In the next proposition we give a precise result about the asymptotic behavior of $\widehat{J(t)\omega(t)}(\xi)$ for $\xi\neq 0$ fixed. 

\begin{prop}\label{linJasymptotics}For $\xi\neq 0$ we have the pointwise behavior
$$\lim_{t\rightarrow\infty}\left(\widehat{J(t)\omega(t)}(\xi)-2ie^{-i\tilde\Psi(t\xi^2)}\partial_\xi Z^+(\xi)+\frac{2a^2}{\xi}(e^{i\tilde\Psi(t\xi^2)}Y^+(\xi)+e^{-i\tilde\Psi(t\xi^2)}Z^+(\xi))\right)=0,$$
where
$$\tilde\Psi(t\xi^2)=t\xi^2-\frac{a^2}{2}\log t\xi^2.$$
\end{prop}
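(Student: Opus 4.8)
The plan is to pass to the limit $t\to\infty$ directly in the exact identity \eqref{link}. For fixed $\xi\neq 0$ and $t$ large enough we have $\tau:=t\xi^2\geq 4a^2$, so \eqref{link} reads
$$\widehat{J(t)\omega(t)}(\xi)=-\frac{2a^2}{\xi}\left(e^{i\Psi(\tau)}Y_2+e^{-i\Psi(\tau)}Z_2\right)+i\left(1-\alpha(\tau)\right)e^{i\Psi(\tau)}\partial_2Y_2+i\left(1+\alpha(\tau)\right)e^{-i\Psi(\tau)}\partial_2Z_2,$$
where $(Y_2,Z_2)$ and their $\xi$-derivatives $(\partial_2Y_2,\partial_2Z_2)$ are evaluated at $(\tau,\xi)$. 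I would then assemble the four asymptotic facts needed as $\tau\to\infty$: that $\alpha(\tau)\to 1$; that $(Y_2,Z_2)(\tau,\xi)\to(Y^+,Z^+)(\xi)$, already proved in \cite{BV2}; that the $\xi$-derivatives converge, $(\partial_2Y_2,\partial_2Z_2)(\tau,\xi)\to(\partial_\xi Y^+,\partial_\xi Z^+)(\xi)$; and that the phases satisfy $\Psi(\tau)-\tilde\Psi(\tau)\to 0$.

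Granting these, the limit is obtained term by term. In the middle summand $1-\alpha(\tau)\to 0$ while $\partial_2Y_2$ stays bounded, so it vanishes; in the last summand $1+\alpha(\tau)\to 2$ and $\partial_2Z_2\to\partial_\xi Z^+$, producing $2ie^{-i\tilde\Psi(\tau)}\partial_\xi Z^+(\xi)$; in the first summand $(Y_2,Z_2)\to(Y^+,Z^+)$ produces $-\frac{2a^2}{\xi}\left(e^{i\tilde\Psi(\tau)}Y^+(\xi)+e^{-i\tilde\Psi(\tau)}Z^+(\xi)\right)$, and summing gives exactly the asserted expression. The replacement of $\Psi$ by $\tilde\Psi$ in the surviving terms is licit because, from the definition $\Psi(\tau)=\tau-\frac{a^2}{2}\log\tau-\int_\tau^\infty\left(\alpha(s)-1+\frac{a^2}{2s}\right)ds$, one has $\Psi(\tau)-\tilde\Psi(\tau)=-\int_\tau^\infty\left(\alpha(s)-1+\frac{a^2}{2s}\right)ds$, whose integrand is $\mathcal O(s^{-2})$ (the Taylor expansion of $\sqrt{1-a^2/s}$ cancels the $s^{-1}$ contribution), so the tail tends to $0$ and hence $\left|e^{\pm i\Psi(\tau)}-e^{\pm i\tilde\Psi(\tau)}\right|\leq|\Psi(\tau)-\tilde\Psi(\tau)|\to 0$. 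Since these bounded phase factors multiply convergent quantities, each replacement costs only a vanishing error.

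The only genuinely delicate step is the convergence of the $\xi$-derivatives, and this is where I expect the main difficulty. The key observation I would exploit is that the matrix $M(\tau)$ in the system \eqref{systyz} depends on $\tau$ only, through $\alpha(\tau)$ and $\Psi(\tau)$, and not on the explicit second slot; hence differentiating \eqref{systyz} in that slot shows that $(\partial_2Y_2,\partial_2Z_2)$ solves the \emph{same} linear system \eqref{systyz}. Because $\|M(\tau)\|=\mathcal O(\tau^{-2})$ is integrable near infinity, the very argument that produced a limit for $(Y_2,Z_2)$ produces a limit for $(\partial_2Y_2,\partial_2Z_2)$ as $\tau\to\infty$, and identifying that limit with $\partial_\xi(Y^+,Z^+)$ follows from the local-in-$\xi$ uniformity of the bounds on $\partial_2Y_2,\partial_2Z_2$ in terms of $\hat u_+$ and $\partial_\xi\hat u_+$ of the type already established in Proposition \ref{linJ}. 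Once this interchange of the $\xi$-derivative with the limit $\tau\to\infty$ is in hand, the proposition reduces to the substitution described above.
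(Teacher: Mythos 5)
Your proposal is correct: every limit you invoke holds, and the assembly of the four ingredients (namely $\alpha\to 1$, convergence of $(Y_2,Z_2)$, convergence of the $\xi$-derivatives, and $\Psi-\tilde\Psi\to 0$) does yield the stated asymptotics. It is, however, organized differently from the paper's proof, and the difference sits exactly at the step you identified as delicate. The paper never proves that $(\partial_2Y_2,\partial_2Z_2)(\tau,\xi)$ converges to $\partial_\xi(Y^+,Z^+)(\xi)$; instead it substitutes the integral representation $(Y_2,Z_2)(\tau,\xi)=(Y^+,Z^+)(\xi)-\int_\tau^\infty M(s)(Y_2,Z_2)(s,\xi)\,ds$ into the diagonalization \emph{before} applying $(i\partial_\xi-2t\xi)$, splits the result into a main term $I_1$ (built from $Y^+,Z^+$ alone) and a remainder $I_2$, and bounds $|I_2|\leq C(u_+)/(t\xi^3)$ using $|M(\tau)|\lesssim C(a)\tau^{-2}$ together with the uniform bounds on $\partial_\xi Y_2,\partial_\xi Z_2$ from the proof of Proposition \ref{linJ}. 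In $I_1$ the derivative $\partial_\xi Z^+$ appears by direct differentiation via \eqref{formulas}, so no interchange of limit and derivative is ever needed, and the proof delivers an explicit rate $C(u_+)/(t\xi^3)$, which is stronger than the bare limit statement. Your route instead passes to the limit in the already-established identity \eqref{link} and supplies the missing convergence of the derivatives by the clean observation that, since $M(\tau)$ is independent of $\xi$, the pair $(\partial_2Y_2,\partial_2Z_2)$ solves the \emph{same} integrable system \eqref{systyz} and therefore has a limit, identified with $\partial_\xi(Y^+,Z^+)$ by the standard uniform-convergence interchange theorem (the required local uniformity does follow from the bounds in Proposition \ref{linJ}, and as a byproduct your argument even establishes the differentiability of $Y^+,Z^+$, which the paper takes from the explicit formula \eqref{z^+}). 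In short: the paper's decomposition buys a quantitative rate and avoids any interchange lemma; your argument is softer and purely qualitative, but it suffices for the proposition as stated and isolates a reusable structural fact about the system \eqref{systyz}.
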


\begin{proof}

We write
$$\widehat{J(t)w(t)}(\xi)=(i\partial_\xi-2t\xi)\left (\left(1-\alpha(t\xi^2)\right)e^{i\Phi(t\xi^2)}Y^+(\xi)+\left(1+\alpha(t\xi^2)\right)e^{-i\Phi(t\xi^2)}Z^+(\xi)\right)$$
$$-(i\partial_\xi-2t\xi)\left (\left(1-\alpha\right)e^{i\Phi}\int_{\cdot}^\infty M_{11}Y_2+M_{12}Z_2+\left(1+\alpha\right)e^{-i\Phi}\int_{\cdot}^\infty M_{21}Y_2+M_{22}Z_2\right)(t\xi^2).$$
We denote $I_1$ the first term and $I_2$ the second. 
Since the entries of $M(\tau)$ are upper-bounded by $\frac{C(a)}{\tau^2}$ it follows from \eqref{formulas} that
$$|I_2(t,\xi)|\leq\frac{C(u_+)}{t\xi^3},$$
so this term is negligible for the pointwise asympotics in time. Using again \eqref{formulas} we obtain that 
$$|I_1(t,\xi)-i(1-\alpha)e^{i\Psi}\partial_\xi Y^+-i(1+\alpha)e^{-i\Psi}\partial_\xi Z^++\frac{2a^2}{\xi}(e^{i\Psi}Y^++e^{-i\Psi}Z^+)|\leq\frac{C(u_+)}{t\xi^3}.$$
We have $\alpha(\tau)=\sqrt{1-\frac{a^2}{\tau}}$, so 
$$|I_1(t,\xi)-2ie^{-i\Psi}\partial_\xi Z^++\frac{2a^2}{\xi}(e^{i\Psi}Y^++e^{-i\Psi}Z^+)|\leq\frac{C(u_+)}{t\xi^3},$$
and the statement of the proposition is proved, since $\tilde\Psi(\tau)=\Psi(\tau)+o(\frac 1\tau)$. 
\end{proof}

Using \eqref{z^+} and the above proposition we conclude that in order $J(t)\omega(t)$ to be uniformly in $L^2$ it is necessary that the asymptotic state $\hat u_+(\xi)$ has a null zero Fourier mode. This property is not preserved for solutions of the non-linear equation \eqref{NLS}, and therefore weighted spaces do not seem to be the right functional setting for  Theorem \ref{ivp} and Theorem \ref{cont}.

\end{document}